\def\ps@pprintTitle{%
 \let\@oddhead\@empty
 \let\@evenhead\@empty
 \def\@oddfoot{}%
 \let\@evenfoot\@oddfoot}
\def\pr{\text{ P\/}}
\def\ex{\text{E\/}}
\def\eps{\varepsilon}
\def\part{\partial}
\def\Cal{\mathcal}
\def\var{\text{Var\/}}
\newcommand{\bs}{\boldsymbol}
\newcommand{\beq}{\begin{equation}}
\newcommand{\eeq}{\end{equation}}
\newcommand{\C}{\mathcal{CAT}}
\newcommand{\I}{\mathcal{INV}}
\newtheorem{Theorem}{Theorem}[section]
\newtheorem{Lemma}[Theorem]{Lemma}
\newtheorem{Corollary}[Theorem]{Corollary}
\theoremstyle{remark}
\newtheorem*{Remark}{\bf Remark}
\numberwithin{equation}{section}
\begin{document}

\begin{frontmatter}
\title{Formation of a giant component in the intersection graph of a random chord diagram}

\author{H\"{u}seyin Acan\fnref{1}}
\address{School of Mathematical Sciences, Monash University, VIC 3800, Australia}
\ead{huseyin.acan@monash.edu}
\author{Boris Pittel\fnref{1}}
\address{Department of Mathematics, The Ohio State University, Columbus, Ohio 43210, USA}
\ead{bgp@math.ohio-state.edu}
\fntext[1]{Research of both authors supported by NSF Grant DMS-1101237}
%\cortext[cor1]{Corresponding author}

\begin{keyword}
chord diagram \sep enumeration \sep crossing \sep asymptotics  \sep giant component

05C30\sep 05C80 \sep 05C05\sep 34E05 \sep 60C05
\end{keyword}

\begin{abstract}
We study the number of chords and the number of crossings in the largest component of a random chord diagram when the chords are sparsely crossing. This is equivalent to studying the number of vertices and the number of edges in the largest component of the random intersection graph. Denoting the number of chords by $n$ and the number of crossings by $m$, when $m/n\log n$ tends to a limit in $(0,2/\pi^2)$, we show that the chord diagram chosen uniformly at random from all the diagrams with given parameters has a component containing almost all the crossings and a positive fraction of chords. On the other hand, when $m\le n/14$, the size of the largest component is of size $O(\log n)$. One of the key analytical ingredients is an asymptotic expression for the number of chord diagrams with parameters $n$ and $m$ for $m<(2/\pi^2)n\log n$, based
on the Touchard-Riordan formula and the Jacobi identity for the generating function of Euler partition function.
\end{abstract}

\end{frontmatter}
%\linenumbers

\section{Introduction}

A \emph{chord diagram} of size $n$ is a pairing of $2n$ points. It is customary to place the $2n$ points on a circle in general position, label them $1$ through $2n$ clockwise, and connect the two points in the pairing with a chord. Alternatively, we can represent a chord diagram by putting the numbers $\{1,\dots,2n\}$ on a line in increasing order and connecting the pairs of a chord diagram by an arc; we call it a \emph{linearized chord diagram}. For an illustration, see Figure~\ref{fig: CD and graph}.

%%%%%%%%%%%%%%%%%%%%%%   Insert a % sign in the beginning of the first and last lines to dsiplay the figures
%\begin{comment}
\begin{figure}[t]
\begin{tabular} {c c}
\begin{tikzpicture}[scale = 1]

\coordinate (center) at (0,0);
\def\radius{1.2cm}
\draw (center) circle[radius=\radius];
\begin{scriptsize}
\foreach \x in {1,...,10}
{
\path (center) ++(180+0.1*180-\x*0.2*180:\radius) coordinate (A\x);
\fill (A\x) circle[radius=2pt] ++(180+0.1*180-\x*0.2*180:.7em) node {\x};
}
\foreach \from/\to in {A1/A4, A2/A5, A3/A6, A7/A9, A8/A10}
  \draw (\from) -- (\to);
\end{scriptsize}
\end{tikzpicture}

\begin{tikzpicture}[scale=1, line cap=round,line join=round,>=triangle 45,x=1.0cm,y=1.0cm]
\draw (0,0)--(6,0);
\foreach \x in {1,...,10}
{
\path (0.66*\x-.66,0)++ (0,0) coordinate (n\x);
\fill (n\x) circle[radius=2pt] ++ (0,-.3) node {\x};
}
\foreach \from/\to in {n1/n4, n2/n5, n3/n6, n7/n9, n8/n10}
\path (\from) edge[bend left=800] (\to);
\end{tikzpicture}
\end{tabular}
\caption{A circular and a linearized chord diagram. They are equivalent to each other.}
\label{fig: CD and graph}
\end{figure}
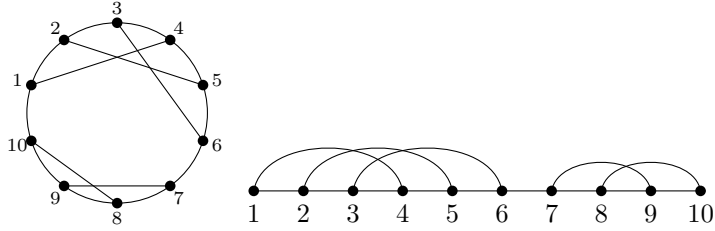
%\end{comment}
%%%%%%%%%%%%%%%%%%%%%%%%%%%%%%%%%%%%%%%%%%%%%%%%%%%%%%%%%%%%%%%%%%%%%%

Chord diagrams appear in various contexts in mathematics, especially in topology. For instance, Chmutov and Duzhin \cite{CD}, Stoimenow \cite{Sto}, Bollob\'{a}s and Riordan \cite{BR2}, and Zagier \cite{Zagier} used chord diagrams to bound the dimension of the space of order $n$ Vassiliev invariants in knot theory. Rosenstiehl~\cite{Rosenstiehl} gave a characterization of Gauss words in terms of the intersection graphs of the chord diagrams.

As another application, consider an oriented surface obtained by taking a regular $2n$-gon and gluing the edges pairwise with opposite directions. Each such gluing defines a chord diagram; simply interpret the glued edges of the  $2n$-gon as pairs of endpoints of chords. In this 
topological context, it is natural to ask what the genus of a given chord diagram is. A remarkable formula for the generating function of the double sequence $c_g(n)$ was found by Harer and Zagier~\cite{HZ}; here $c_g(n)$ denotes the number of chord diagrams with $n$ chords and genus $g$. Linial and Nowik~\cite{LN} found the asymptotic likely  value of the genus of the chord diagram chosen uniformly at random. Subsequently, Chmutov and Pittel \cite{CP} proved that the genus of the random chord diagram is asymptotically Gaussian as $n$ tends to infinity. For detailed information about the chord diagrams and their topological and algebraic significance we refer the reader to Chmutov, Duzhin, and Mostovoy's book~\cite{CDM}.

A chord diagram of size $n$ can be thought of as a fixed-point-free involution of a set of $2n$ numbers. Baik and Reins \cite{Baik-Reins} found the aymptotic distribution of the length of the longest decreasing subsequence of a random fixed-point-free involution. Chen et al. \cite{Chen et al} showed that the crossing number and the nesting number of linearized chord diagrams have a symmetric joint distribution. Since the lack of a decreasing subsequence of length $2k+1$ in an involution is equivalent to the lack of $(k+1)$-nesting in the corresponding chord diagram, the result of Baik and Reins, combined with the result of Chen et al., gives the distribution for the maximum number of chords, all crossing each other, when the chord diagram is chosen uniformly at random.

In random graph theory, Bollob\'{a}s and O. Riordan~\cite{BR1} used random linearized chord diagrams to provide a precise description of the preferential attachment random graph model introduced by Barab\'{a}si and Albert~\cite{BA}.

It is easy to see that there are $(2n-1)!!$ chord diagrams of size $n$. However, enumerating
 chord diagrams with special properties could become hard rather quickly. A classic example is
counting  chord diagrams with a given number of crossings. This problem was first studied by Touchard~\cite{Touchard}, who found a bivariate generating function for $T_{n,m}$, the number of chord diagrams of size $n$ with $m$ crossings. Later, J. Riordan~\cite{Riordan} used Touchard's
formula to extract remarkable explicit formulas for $\sum_m q^m T_{n,m}$, and $T_{n,m}$ 
itself. However, the latter  is in the form of an alternating sum, indispensable for moderate values of $m$ and $n$, but not easily yielding an asymptotic approximation for $T_{n,m}$ for
$n,m\to\infty$. (We refer the reader to Aigner~\cite{Aig} for an eminently readable exposition
of the Touchard-Riordan achievement.)  A quarter century later, Flajolet and Noy~\cite{FN} were able to use J.  Riordan's formula for the univariate $\sum_m q^m T_{n,m}$ to
show that  the number of crossings in the uniformly random chord diagram is asymptotically Gaussian.  Cori and Marcus~\cite{CM} counted the number of isomorphism classes of  chord diagrams, with two chord diagrams being isomorphic if they are rotationally equivalent. 

Another way to represent a chord diagram $\Cal D$ is to associate with it a graph $G_{\Cal D}$,   called the \emph{intersection graph of $\Cal D$}. The vertices of $G_{\Cal D}$ are the chords of $\Cal D$ and there is an edge between two vertices in $G_{\Cal D}$ if and only if the corresponding chords cross each other in $\Cal D$, see Figure~\ref{graph}. If, instead of labeling the endpoints of the chords, we label the chords from $1$ to $n$ in an arbitrary way, we obtain a labeled \emph{circle graph}. Circle graphs are interesting in their own right and they have been studied widely.
A characterization of circle graphs was given by Bouchet \cite{Bouchet}. 
(Still, as Arratia et al.~\cite{Arratia} pointed out in a lucid discussion, even a formula, exact or asymptotic, for the number 
of circle graphs remains unknown.)

%%%%%%%%%%%%%%%%%%%%%%   Insert a % sign in the beginning of the first and last lines to dsiplay the figures
%\begin{comment}
\begin{figure}[h]
\centering
\begin{tikzpicture}
\coordinate (n1) at (1,10) {};
\coordinate (n2) at (0,9) {};
\coordinate (n3)  at (2,9) {};
\coordinate (n4)   at (4,9) {};
\coordinate (n5) at (4,10) {};

\fill (n1) circle[radius=3pt] node[above] {(1,4)};
\fill (n2) circle[radius=3pt] node[left] {(2,5)};
\fill (n3) circle[radius=3pt] node[right] {(3,6)};
\fill (n4) circle[radius=3pt] node[right] {(7,9)};
\fill (n5) circle[radius=3pt] node[right] {(8,10)};

\foreach \from/\to in {n1/n2, n1/n3, n2/n3, n4/n5}
  \draw (\from) -- (\to);
\end{tikzpicture}
\caption{Intersection graph of the chord diagram given in Figure~\ref{fig: CD and graph}.}
\label{graph}
\end{figure}
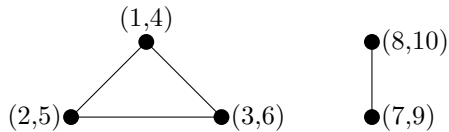
%\end{comment}
%%%%%%%%%%%%%%%%%%%%%%%%%%%%%%%%%%%%%%%%%%%%%%%%%%%%%%%%%%%%%%%%%%%%%%%

A chord diagram $\Cal D$ is \emph{connected} if there is no line cutting the circle 
that does not intersect any of the chords and partitions 
 the set of chords into two nonempty subsets. In other words, $\Cal D$ is connected if and only if $G_{\Cal D}$ is connected.
 
By making use of recurrence relations, Stein and Everett \cite{SE} proved that, as $n$ tends to infinity, the probability that a random chord diagram with $n$ chords is connected approaches $1/e$. Later, Flajolet and Noy \cite{FN} proved that almost all chord diagrams are \emph{monolithic}, i.e.,  consist of a single giant  component and a number of isolated chords. Having proved that in the limit the number of isolated chords was Poisson$(1)$, they recovered Stein and Everett's result. In his Ph.D. thesis \cite{Acan}, the first coauthor of this paper undertook an enumerative-probabilistic study of other parameters of chord diagrams and the associated intersection graphs, and in particular, extended the result of Flajolet and Noy in several directions. 

Our motivation for a probabilistic study of a random diagram comes from the realization that
its intersection graph represents a rather natural analogue of the classic random graph
$G(n,m)$, distributed uniformly on the set of all $\binom{\binom{n}{2}}{m}$ graphs on 
$[n]$ with $m$ edges. More than half a century ago, Erd\H{o}s and R\'{e}nyi~\cite{ER1}--\cite{ER2} basically created modern random graph theory by determining the sharp threshold
value of the number of edges in $G(n,m)$ for likely birth of
a giant component and the (larger) threshold value for the number of edges guaranteeing
that, with high probability (whp)\footnotemark, \footnotetext{A sequence of events $A_1,A_2,\dots$ occurs with high probability, abbreviated as whp throughout the paper, if $\lim_{n\to \infty}\pr(A_n)=1$.} the random graph is connected. What are then analogous 
thresholds for the intersection graphs of chord diagrams?

In this paper, we find some partial answers. We show that if the number of crossings $m=m(n)$
is such that  $\lim m/(n\log n) \in (0,2/\pi^2)$, then, with high probability  there is a giant component containing almost all  $m$ crossings and a positive fraction of all vertices. 

It is highly plausible that, for every $m>
(2/\pi^2)n\log n$, the intersection graph is likely to contain a giant component as well. 
To show this, presumably one has to find a way to ``embed''  the intersection graph with
$m_1$ crossings into that with $m_2$ crossings, whenever $m_1<m_2$. However, unlike the
Erd\H os-R\'enyi random graph $G(n,m)$, such an embedding is highly 
problematic, if possible at all, for the random intersection graphs. The only insight into
the component structure for those $m$'s  we have is a {\it gap property\/} for the number of
crossings in the
{\it densest\/} component, the one with the highest ratio of number of crossings to the number
of chords. Whp,  the number of crossings is either $O((m/n)\log n)$ or almost $m$, implying that the size of the densest component  is either at most $O((m/n)\log n)$, or at least $\sqrt{2m}$.

We also show that if $m\le n/14$, then
whp the largest component has a size below $5\log n/(\log \tfrac{225}{224})$.
The bound $m\le n/14$ may well be improved; by comparison, the giant-component threshold
for $G(n,m)$ is $m = n/2$,  see Erd\H{o}s-R\'{e}nyi~\cite{ER1}--\cite{ER2}, Bollob\'as~\cite{Bollobas}.

Now, for $G(n,m)$ the connectedness threshold is $m\sim (n \log n)/2$, (\cite{ER1}, \cite{ER2},
\cite{Bollobas}).  According to \cite{FN},
the crossing number of the uniformly random diagram is sharply concentrated
around its mean $\approx n^2/6$, with a standard deviation of order $n^{3/2}$, and the intersection graph is disconnected with 
positive limiting probability $1-e^{-1}$, see \cite{SE}. These results almost certainly rule out the existence of 
a connectedness threshold $m(n)=o(n^2)$. Still, we conjecture that $m(n)\approx n^{3/2}$ is
the threshold value of $m$ for the second largest component to be of bounded size. 

Among the key ingredients of our proofs is an asymptotic formula for $T_{n,m}$ for $m<(2/\pi^2)n \log n$, and a  bound $T_{n,m}\le C_n I_{n,m}$, where $C_n$ is the $n$-th Catalan number
and $I_{n,m}$ is the number of permutations of $[n]$ with $m$ inversions. The asymptotic formula
is based on the Touchard-Riordan sum-type formula, Jacobi's identity 
and Freiman's asymptotic formula for the generating function of the Euler partition
function. A final step in our argument is based on a rather deep formula for 
the number of non-crossing partitions with given block sizes, due to Kreweras~\cite{Kreweras}.

We should note that the Jacobi identity had appeared prominently in Josuat-Verg\'es and Kim's \cite{JK}
paper in the context of some new Touchard-Riordan type formulas for generating functions. 

The rest of the paper is organized as follows. In Section~\ref{sec: counting} we use the 
Touchard-Riordan formula to derive the asymptotic formula for $T_{n,m}$.  In Section~\ref{sec: bounds} we 
establish the upper and lower bounds for $T_{n,m}$. In Section~\ref{sec: large components} we use the asymptotics and the bounds for $T_{n,m}$ to prove our main results via analysis
of the likely sizes of the densest and the largest components in the random intersection graph. 
We conclude with a list of open problems.

\section{Counting moderately crossing chord diagrams}\label{sec: counting}
Let $T_{n,m}$ denote the number of
$n$-chord diagrams with $m$ crossings, where $m\in\bigl[0,\binom{n}{2}\bigr]$. For $n>0$, introduce the generating function $T_n(x)=\sum_m T_{n,m}x^m$, and let $T_0(x):=1$. Introduce
the bivariate generating function
\[
T(x,y)=\sum_{n\ge 0}T_n(x)y^n=\sum_{n,m}T_{n,m}x^my^n.
\]
Touchard discovered a remarkable formula for $T(x,y)$. To state it, introduce the
Catalan numbers $C_n=(n+1)^{-1} \binom{2n}{n}$, and the generating function of $\{C_n\}$,
\[
C(y)=\sum_{n\ge 0}C_ny^n.
\]
It is well known that the series converges for $|y|\le 1/4$, and for those $y$'s
\begin{equation}\label{Cquadr}
yC^2(y)-C(y)+1=0.
\end{equation}
Solving \eqref{Cquadr} with the initial condition $C(0)=C_0=1$, we find
\beq\label{Catalan solution}
C(y)= \frac{1-\sqrt{1-4y}}{2y}=\frac{2}{1+\sqrt{1-4y}}.
\eeq
For $D(y):=C(y)-1$, the equation \eqref{Cquadr} becomes
\begin{equation}\label{Dquadr}
D(y)=y(D(y)+1)^2.
\end{equation}
Then,  by Lagrange inversion formula,
\begin{equation}\label{Lag}
[y^n] D(y)^j =\frac{j}{n} [y^{n-j}](y+1)^{2n}=\frac{j}{n}\binom{2n}{n-j},\quad 0<j\le n;
\end{equation}
for $j=1$, we are back to $C_n=(n+1)^{-1}\binom{2n}{n}$.
Introduce also 
\begin{equation}\label{Axy=}
A(x,y)=\sum_{j\ge 0}x^{\binom{j+1}{2}}y^j.
\end{equation}
The series  converges for $|x|<1$ and all $y$. 
Touchard's formula states: for $|x|<1$, $|y|\le 1/4$,
\begin{equation}\label{Tou}
T(x,(1-x)y)=C(y) A(x,1-C(y)),\quad z:=\frac{y}{1-x}.
\end{equation}
Using Equation~\eqref{Tou}, Riordan found $T_{n,m}$ as an alternating sum.
\begin{Theorem}[Touchard-Riordan]
The number of chord diagrams with $n$ chords and $m$ crossings is given by
\begin{equation}\label{Rior}
T_{n,m}=\sum_{j}(-1)^j\binom{n+m-1-J(j)}{n-1}\frac{2j+1}{n+j+1}\binom{2n}{n-j},
\end{equation}
where $J(j)={j+1 \choose 2}$ and the sum is over all $j\ge 0$ such that $j\le n$, $J(j)\le m$.
\end{Theorem}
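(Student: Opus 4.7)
The plan is to extract the coefficient of $x^m y^n$ from both sides of Touchard's identity \eqref{Tou} and thereby convert it into the explicit alternating sum for $T_{n,m}$. To begin, I would rewrite the right-hand side of \eqref{Tou} using $1-C(y) = -D(y)$, where $D(y)=C(y)-1$ satisfies \eqref{Dquadr}: the definition \eqref{Axy=} of $A$ gives
\[
A\bigl(x,\,1-C(y)\bigr) \;=\; \sum_{j \ge 0} (-1)^{j} x^{J(j)} D(y)^{j},
\]
and multiplying by $C(y) = 1 + D(y)$ turns \eqref{Tou} into
\[
T\bigl(x,(1-x)y\bigr) \;=\; \sum_{j \ge 0} (-1)^{j} x^{J(j)} \bigl[D(y)^{j} + D(y)^{j+1}\bigr].
\]

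Next I would take $[y^n]$ on each side. On the left, $T(x,(1-x)y) = \sum_n T_n(x)(1-x)^n y^n$ gives $T_n(x)(1-x)^n$. On the right, the Lagrange inversion identity \eqref{Lag} supplies $[y^n]D(y)^j = \tfrac{j}{n}\binom{2n}{n-j}$ for $1 \le j \le n$, while the $j=0$ summand contributes nothing when $n \ge 1$. Therefore
\[
T_n(x)(1-x)^n \;=\; \sum_{j \ge 0} (-1)^{j} x^{J(j)} \left[\frac{j}{n}\binom{2n}{n-j} + \frac{j+1}{n}\binom{2n}{n-j-1}\right].
\]

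The only step that requires care, and which I expect to be the main (though purely algebraic) obstacle, is collapsing the bracket to a single binomial. Using $\binom{2n}{n-j-1} = \tfrac{n-j}{n+j+1}\binom{2n}{n-j}$ and clearing denominators, the task reduces to verifying the polynomial identity $j(n+j+1) + (j+1)(n-j) = n(2j+1)$, which is immediate. Hence the bracket equals $\tfrac{2j+1}{n+j+1}\binom{2n}{n-j}$, so that
\[
T_n(x) \;=\; \frac{1}{(1-x)^n}\sum_{j \ge 0} (-1)^{j} x^{J(j)} \frac{2j+1}{n+j+1}\binom{2n}{n-j}.
\]
The final step is to extract $[x^m]$: expanding $(1-x)^{-n} = \sum_{k\ge 0}\binom{n-1+k}{n-1}x^k$ gives $[x^{m-J(j)}](1-x)^{-n} = \binom{n+m-1-J(j)}{n-1}$ whenever $J(j) \le m$, while the range $j \le n$ is automatically enforced by the vanishing of $\binom{2n}{n-j}$. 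Assembling these pieces yields exactly the formula \eqref{Rior}.
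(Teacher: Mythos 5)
Your proof is correct and follows essentially the same route the paper takes in proving Lemma~\ref{xmynTell}, which generalizes this theorem to $[x^m y^n]\,T^{\ell}(x,y)$: substituting $1-C(y)=-D(y)$, expanding $A$, applying Lagrange inversion \eqref{Lag}, and extracting $[x^{m-J(j)}](1-x)^{-n}$. Specializing the paper's argument to $\ell=1$ gives exactly your computation, the only cosmetic difference being that the paper writes the resulting coefficient as $\tfrac{2j+1}{2n+1}\binom{2n+1}{n-j}$, which equals your $\tfrac{2j+1}{n+j+1}\binom{2n}{n-j}$.
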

We prove the following more general statement.
\begin{Lemma}\label{xmynTell} Given $\ell\ge 1$, 
\begin{align}\label{xmynTell=sum}
&[x^my^n] T^{\ell}(x,y)=\sum_{\bs j=(j_1,\dots,j_{\ell})\ge \bs 0}
\binom{n+m-1-\sum_{\nu} J(j_{\nu})}{n-1} \prod_{\mu=1}^{\ell} (-1)^{j_{\mu}} \notag 	\\
&\quad\times\frac{2j+\ell}{2n+\ell}\binom{2n+\ell}{n-j},\quad j:=\sum_{\nu'}j_{\nu'},
\end{align}
and the sum is over $\bs j$  such that  $j\le n$ and $\sum_{\mu}J(j_{\mu})\le m$.
\end{Lemma}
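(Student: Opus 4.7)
The plan is to generalize the derivation of the Touchard--Riordan formula \eqref{Rior} by starting from the $\ell$-th power of Touchard's identity \eqref{Tou}. Writing the auxiliary variable in \eqref{Tou} as $w$, I have
\[
T^{\ell}(x,(1-x)w) = C(w)^{\ell}\, A(x, 1-C(w))^{\ell}.
\]
Expanding the second factor by means of \eqref{Axy=} as an $\ell$-fold sum over $\bs j = (j_1,\dots,j_\ell)$, and using $1-C(w) = -D(w)$, I obtain
\[
T^{\ell}(x, (1-x)w) = \sum_{\bs j \ge \bs 0} \Bigl(\prod_{\mu}(-1)^{j_{\mu}}\Bigr) x^{\sum_{\nu}J(j_{\nu})}\, C(w)^{\ell} D(w)^{j}, \qquad j = \sum_{\mu} j_{\mu}.
\]

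Next, I use the quadratic equation \eqref{Dquadr} in the form $D(w) = wC(w)^{2}$ to collapse
\[
C(w)^{\ell}D(w)^{j} = w^{j}\, C(w)^{\ell+2j}.
\]
Changing variables via $y=(1-x)w$, so that each power $w^{k}$ becomes $y^{k}(1-x)^{-k}$, and extracting $[y^{n}]$, the right-hand side reduces to a sum whose general term carries the factor $[w^{n-j}]C(w)^{\ell+2j}$ together with $(1-x)^{-n}$. Extracting the coefficient of $x^{m-\sum_{\nu}J(j_{\nu})}$ from $(1-x)^{-n}$ produces precisely $\binom{n+m-1-\sum_{\nu}J(j_{\nu})}{n-1}$, the binomial in \eqref{xmynTell=sum}; this binomial vanishes unless $\sum_{\nu}J(j_{\nu}) \le m$, giving the corresponding constraint on the summation range.

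All that remains is to establish the Catalan-power coefficient identity
\[
[w^{N}]\,C(w)^{r} = \frac{r}{2N+r}\binom{2N+r}{N},
\]
which I would prove by Lagrange inversion applied to the equation $D=w(D+1)^{2}$ with $H(D)=(D+1)^{r}$; this is a direct extension of \eqref{Lag}, the latter being the specialization $H(D)=D^{j}$. Setting $N = n-j$ and $r = \ell + 2j$ yields the factor $\dfrac{2j+\ell}{2n+\ell}\binom{2n+\ell}{n-j}$, which enforces the remaining constraint $j\le n$ through the vanishing of the binomial. Assembling the three ingredients recovers \eqref{xmynTell=sum}. No step is delicate; the only ingredient beyond the Touchard--Riordan argument itself is the Lagrange-inversion identity above, which is a routine computation.
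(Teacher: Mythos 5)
Your proof is correct, and the overall strategy (expand $A^\ell$ via \eqref{Axy=}, handle the $(1-x)^{-n}$ factor, extract coefficients) is the same as the paper's; the difference is in how you evaluate the $y$-coefficient. The paper expands $C(y)^{\ell}=(1+D(y))^{\ell}$ binomially, applies \eqref{Lag} to each $[y^n]D(y)^{\kappa+j}$, and then has to collapse a Vandermonde-style sum
\[
\sum_{\kappa=0}^{\ell}\binom{\ell}{\kappa}\frac{\kappa+j}{n}\binom{2n}{n-\kappa-j}=\frac{2j+\ell}{2n+\ell}\binom{2n+\ell}{n-j}
\]
in two steps. You instead use the functional equation $D(w)=wC(w)^2$ to write $C^{\ell}D^{j}=w^{j}C^{\ell+2j}$, reducing everything to a single Lagrange-inversion computation of $[w^{N}]C(w)^{r}$ with $H(D)=(D+1)^{r}$; this yields $\frac{r}{N}\binom{2N+r-1}{N-1}=\frac{r}{2N+r}\binom{2N+r}{N}$ directly (with $N=n-j$, $r=\ell+2j$). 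Your route is cleaner in that it avoids the binomial-convolution gymnastics entirely, at the small cost of one extra application of the quadratic identity for $D$; both approaches are elementary and are really the same Lagrange inversion viewed two different ways.

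One small remark on exposition: when invoking the Lagrange-inversion formula $[w^N]H(D(w))=\frac{1}{N}[D^{N-1}]H'(D)(D+1)^{2N}$ you should note (or check separately) that the case $N=0$, i.e. $j=n$, is consistent; it is, since $[w^0]C(w)^r=1=\frac{r}{r}\binom{r}{0}$, so the final formula holds for all $0\le j\le n$ as needed.
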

\begin{proof}
Using \eqref{Dquadr}, \eqref{Lag}, and \eqref{Tou},
\begin{align}
&[x^my^n]T^{\ell}(x,y)=[x^my^n] \bigl((1-x)^{-n} C(y)^{\ell}A^{\ell}(x,1-C(y))\bigr)	\notag	\\
&=[x^my^n] \left\{(1-x)^{-n} (D(y)+1)^{\ell}\left(\sum_{j\ge 0}(-1)^j x^{J(j)}D(y)^j\right)^{\ell} \right\} 	\notag	\\
&=[x^my^n] \left\{(1-x)^{-n}\sum_{j_1,\dots,j_{\ell}\ge 0}\prod_{\mu=1}^{\ell}(-1)^{j_{\mu}}x^{J(j_{\mu})}
D(y)^{j_{\mu}}\left(\sum_{\kappa=0}^{\ell}\binom{\ell}{\kappa}D(y)^{\kappa}\right)\right\} 		\notag	\\
&=\sum_{j_1,\dots,j_{\ell}\ge 0}\prod_{\mu}(-1)^{j_{\mu}}\left([x^m] (1-x)^{-n}x^{\sum_{\nu}J(j_{\nu})}\right)\left(\sum_{\kappa=0}^{\ell}\binom{\ell}{\kappa}[y^n]D(y)^{\kappa +j}
\right)\notag\\
&=\sum_{j_1,\dots,j_{\ell}\ge 0} \left(\prod_{\mu=1}^{\ell} (-1)^{j_{\mu}}\right)
\binom{n+m-1-\sum_{\nu} J(j_{\nu})}{n-1} \notag\\
&\qquad \times\sum_{\kappa=0}^{\ell} \binom{\ell}{\kappa}\frac{\kappa+j}{n}\binom{2n}{n-\kappa-j},
\quad j:=\sum_{\nu'}j_{\nu'}.
\notag
\end{align}
Here
\[
\sum_{\kappa=0}^{\ell}\binom{\ell}{\kappa}\binom{2n}{n-\kappa-j}=\binom{2n+\ell}{n-j},
\]
and
\begin{align*}
\sum_{\kappa=0}^{\ell}\kappa\binom{\ell}{\kappa}\binom{2n}{n-\kappa-j}&=
\ell\sum_{r=0}^{\ell-1}\binom{\ell-1}{r}\binom{2n}{n-r-1-j}\\
&=\ell\binom{2n+\ell-1}{n-1-j},
\end{align*}
implying
\begin{align*}
\sum_{\kappa=0}^{\ell} \binom{\ell}{\kappa}\frac{\kappa+j}{n}\binom{2n}{n-\kappa-j}
&=\frac{j}{n}\binom{2n+\ell}{n-j}+\frac{\ell}{n}\binom{2n+\ell-1}{n-1-j}\\
&=\frac{2j+\ell}{2n+\ell}\binom{2n+\ell}{n-j}.
\end{align*}
This completes the proof of \eqref{xmynTell=sum}.
\end{proof}

Lemma \ref{xmynTell} enables us to derive an asymptotic formula for $[x^my^n] T^{\ell}(x,y)$,
whence for $T_{n,m}$,  when the number of crossings $m$ is not too large compared with $n$.
We begin with
\begin{Lemma}\label{4} Let $n\to \infty$ and $m=O(n)$.
Then, setting $q=m/(m+n)$,
\begin{equation}\label{asym*}
\begin{aligned}
T_{n,m}&\sim \binom{n+m-1}{n-1}C_n\prod_{j\ge 1}(1-q^j)^3,\\
[x^ny^m]T^{\ell}(x,y)&\sim \ell(2f(q))^{\ell-1}T_{n,m};\quad f(x):=\sum_{j\ge 0}(-1)^jx^{J(j)}.
\end{aligned}
\end{equation}
\end{Lemma}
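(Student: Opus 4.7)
The plan is to reduce the Touchard-Riordan sum \eqref{Rior} to the classical Jacobi identity
\[
\sum_{j\ge 0}(-1)^j(2j+1)q^{J(j)}=\prod_{k\ge 1}(1-q^k)^3.
\]
Concretely, factor $\binom{n+m-1}{n-1}C_n$ out of every summand of \eqref{Rior}; the residual $\alpha_j(n,m)$ is the product of three ratios, namely $\binom{n+m-1-J(j)}{n-1}/\binom{n+m-1}{n-1}$, $\binom{2n}{n-j}/\binom{2n}{n}$, and $(2j+1)(n+1)/(n+j+1)$. An elementary telescoping gives, for each fixed $j$, $\alpha_j(n,m)=(2j+1)q^{J(j)}\bigl(1+O_j(1/n)\bigr)$ as $n\to\infty$ with $m=O(n)$. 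If the $1+o(1)$ slack can be pulled outside the alternating sum, Jacobi immediately produces the first line of \eqref{asym*}.

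That pull-out is the heart of the argument. A monotonicity check yields the uniform crude bound $|\alpha_j(n,m)|\le (2j+1)\rho^{J(j)}$ with $\rho=q\bigl(1+O(1/n)\bigr)$. The hypothesis $m=O(n)$ keeps $q$ bounded away from $1$, so for $n$ large one has $\rho<1$ and the dominating series $\sum_j(2j+1)\rho^{J(j)}$ is absolutely convergent. Splitting the sum into a head ($j\le K$) controlled by the fixed-$j$ asymptotics and a tail bounded by $\sum_{j>K}(2j+1)\rho^{J(j)}$, and then letting $K\to\infty$, yields
\[
\sum_{j\ge 0}(-1)^j\alpha_j(n,m)=\prod_{k\ge 1}(1-q^k)^3+o(1).
\]
Since $\prod_k(1-q^k)^3$ is bounded below by a positive constant in the relevant range of $q$, this gives the first line of \eqref{asym*}.

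For the second line I would run the identical reduction on the $\ell$-fold formula \eqref{xmynTell=sum}. The only new pointwise asymptotic needed is $\frac{2j+\ell}{2n+\ell}\binom{2n+\ell}{n-j}/C_n\to (2j+\ell)\,2^{\ell-1}$ for each fixed $\bs j$ (with $j=\sum_\nu j_\nu$), which follows from $\binom{2n+\ell}{n-j}/\binom{2n}{n}\to 2^\ell$. The limiting normalized sum is therefore asymptotically
\[
2^{\ell-1}\sum_{\bs j\ge\bs 0}(2j+\ell)\prod_\mu(-1)^{j_\mu}q^{J(j_\mu)}.
\]
Writing $2j+\ell=\ell+2\sum_\mu j_\mu$ and exploiting the symmetry of the $\ell$ coordinates, this decouples into $\ell\,2^{\ell-1}f(q)^{\ell-1}\bigl(f(q)+2g(q)\bigr)$, where $g(q):=\sum_j j(-1)^jq^{J(j)}$. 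Jacobi identifies $f(q)+2g(q)=\prod_k(1-q^k)^3$, and dividing by the first line of \eqref{asym*} extracts the prefactor $\ell(2f(q))^{\ell-1}$.

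The main obstacle is the uniform tail bound in the second paragraph. Because $J(j)$ may grow linearly in $n$, the $(1+O(1/n))^{J(j)}$ slack in the bound on the first ratio is \emph{not} genuinely $1+o(1)$, and one must rely on the exponential decay of $q^{J(j)}$ (with $q$ bounded below $1$) to absorb it. For the $\ell$-fold sum the same bookkeeping has to be carried out separately in each coordinate, but no fundamentally new idea is required.
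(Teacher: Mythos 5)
Your proposal follows essentially the same route as the paper's proof: factor $\binom{n+m-1}{n-1}C_n$ out of the $\ell$-fold Touchard--Riordan sum from Lemma~\ref{xmynTell}, establish the uniform majorant $(2j+\ell)\rho^{\Sigma(\bs j)}$ (absorbing the $(1+O(1/n))^{J(j)}$ slack into $q^{\Sigma(\bs j)}$ exactly as in \eqref{prod<}--\eqref{Snmj,uniform}), pass to the limit by dominated convergence, decouple the resulting sum using $2j+\ell=\ell+2\sum_\mu j_\mu$, and finish with the Jacobi identity \eqref{Jacobi}. The only cosmetic difference is that you first treat $\ell=1$ via \eqref{Rior} and then redo for general $\ell$, whereas the paper works with \eqref{xmynTell=sum} directly for all $\ell$.
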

\begin{proof} We notice upfront that $1-q$ is bounded away from $0$ for $m=O(n)$. Consider
the more difficult case $m\to\infty$.
Let $S_{n,m}(\bs j)$ denote the absolute value of the 
$\bs j$-th term of the sum in \eqref{xmynTell=sum}, i.e.,
\[
S_{n,m}(\bs j)=\binom{n+m-1-\Sigma(\bs j)}{n-1}\frac{2j+\ell}{2n+\ell}\binom{2n+\ell}{n-j},\quad
\Sigma(\bs j):=\sum_{\nu}J(j_{\nu}).
\]
Observe that
\begin{equation}\label{bin=bin}
\binom{n+m-1-\Sigma(\bs j)}{n-1}=\binom{n+m-1}{n-1}\prod_{k=m+1}^{n+m-1}\bigl(1-\Sigma(\bs j)/k\bigr),
\end{equation}
where $(a)_k$ denotes the $k$-th falling factorial of $a$.
Here
\begin{equation}\label{prod<}
\frac{(m)_{\Sigma(\bs j)}}{(m+n-1)_{\Sigma(\bs j)}}\le \left(\frac{m}{m+n-1}\right)^{\Sigma(\bs j)} \le  e q^{\Sigma(\bs j)},\quad (q=m/(m+n))
\end{equation}
since $\Sigma(\bs j)\le m$ for an admissible $\bs j$. Also
\begin{align*}
\frac{2j+\ell}{2n+\ell}\binom{2n+\ell}{n-j}\le \frac{2j+\ell}{2n+\ell} \binom{2n+\ell}{n}
\le \frac{2j+\ell}{2n+\ell} \,2^{\ell}\binom{2n}{n}\le 2^{\ell} (2j+\ell) C_n.
\end{align*}
Therefore, uniformly for all admissible $\bs j$,
\begin{equation}\label{Snmj,uniform}
S_{n,m}(\bs j)\le_b 2^{\ell}(2j+\ell)\binom{n+m-1}{n-1}C_n\cdot q^{\Sigma(\bs j)}.
\end{equation}
Here and elsewhere we use $A \le_b B$ as a shorthand for $A=O(B)$ when $B$ is too bulky.  Further,  $\bs j$ is certainly admissible if say $j<m^{1/5}$, and  it is not difficult to obtain 
that for those $\bs j$ 
\begin{equation}\label{Snmj,bounded}
S_{n,m}(\bs j)=\bigl(1+O(j^4/m)\bigr)
2^{\ell-1}(2j+\ell)\binom{n+m-1}{n-1}C_n\cdot q^{\Sigma(\bs j)}.
\end{equation}
Combining \eqref{Snmj,uniform} and \eqref{Snmj,bounded}, and using the uniform 
convergence of the infinite series $\sum_{\bs j\ge \bs 0} j^4 q^{\Sigma(\bs j)}$,  we get
\begin{multline}\label{comb}
[x^my^n]T^{\ell}(x,y)=\binom{n+m-1}{n-1}C_n2^{\ell-1}\\
\times\left[\sum_{\bs j}(2j+\ell)\prod_{\mu=1}^{\ell}(-1)^{j_{\mu}}q^{J(j_{\mu})}+o(1)\right],
\end{multline}
the sum being taken over {\it all\/} $\bs j\ge \bs 0$. Here
\[
\sum_{\bs j}\prod_{\mu=1}^{\ell}(-1)^{j_{\mu}}q^{J(j_{\mu})}=f(q)^{\ell},\quad
f(x)=\sum_{j\ge 0}(-1)^j x^{J(j)},
\]
and
\[
\sum_{\bs j} 2j\prod_{\mu=1}^{\ell}(-1)^{j_{\mu}}q^{J(j_{\mu})}=2\ell \left(\sum_{j_1\ge 0}
(-1)^{j_1} j_1q^{J(j_1)}\right) f(q)^{\ell-1}.
\]
So 
\begin{align*}
\sum_{\bs j}(2j+\ell)\prod_{\mu=1}^{\ell}(-1)^{j_{\mu}}q^{J(j_{\mu})}&=
\ell f(q)^{\ell-1}\left(f(q)+2\sum_{j_1\ge 0}(-1)^{j_1}j_1q^{J(j_1)}\right)\\
&=\ell f(q)^{\ell-1}\sum_{j_1\ge 0}(-1)^{j_1}(2j_1+1)q^{J(j_1)},
\end{align*}
and \eqref{comb} becomes
\begin{multline}\label{sum2j+1}
[x^my^n]T^{\ell}(x,y)=\binom{n+m-1}{n-1}C_n\ell \\
\times \left[(2f(q))^{\ell-1}\sum_{j\ge 0}(-1)^j (2j+1)
q^{J(j)}+o(1)\right].
\end{multline}
Since the series for $f(x)$ is alternating, and $q^{J(j)}\downarrow 0$, we have $f(q)>1-q$,
i.e., $f(q)$ is bounded away from zero.  However, $(2j+1)q^{J(j)}$ is not monotone, and bounding  the last alternating sum from below would be a rather hard task. 
Fortunately,
there is a remarkable identity discovered by Jacobi as a corollary of the classic triple
product identity, Andrews et al. \cite[Page 500]{And}:
\begin{equation}\label{Jacobi}
\sum_{j\ge 0}(-1)^j (2j+1) x^{J(j)}= \prod_{j\ge 1}(1-x^j)^3,\quad |x|<1.
\end{equation}
Since our $q=m/(m+n)$ is bounded away from $1$ for $m=O(n)$, the product on the RHS
of \eqref{Jacobi} for $x=q$ is bounded away from zero uniformly for $n$.  The equations 
\eqref{sum2j+1} and \eqref{Jacobi} complete the proof of Lemma \ref{4}.
\end{proof}

The reader is correct to suspect that the constraint $m=O(n)$ is unnecessarily restrictive. In our
next statement we extend the asymptotic formulas to $m\le c n \log n$.  We hope that
the inevitably more technical argument can be understood more easily since we will use
the proof above as a rough template. 

\begin{Lemma}\label{ell components} 
Let  $\ell\ge 1$ be given. If
\begin{equation}\label{m leq}
m\le \frac{2}{\pi^2}n\bigl(\log n - 0.5(\ell+2)\log\log n -\omega(n)\bigr),
\end{equation}
where $\omega(n)\to\infty$ however slowly, then 
\begin{equation}\label{asym,ext,ell}
[x^ny^m] \,T(x,y)^{\ell}\sim \ell (2f(q))^{\ell-1} \binom{n+m-1}{n-1}C_n\prod_{j\ge 1}(1-q^j)^3.
\end{equation}
\end{Lemma}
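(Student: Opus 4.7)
The plan is to follow the template of the proof of Lemma~\ref{4}, starting from the exact identity \eqref{xmynTell=sum}, but with sharper bookkeeping. The new difficulty is that $q=m/(m+n)$ may approach $1$, so the target $\prod_{j\ge 1}(1-q^j)^3$ is no longer bounded below by a constant. I would pin down the size of the target via the Freiman-type asymptotic for the Euler generating function (equivalent to the modular transformation of the Dedekind $\eta$-function): with $\alpha:=-\log q\sim n/m\to 0^+$,
\[
\prod_{j\ge 1}(1-q^j)\sim \sqrt{2\pi/\alpha}\,\exp\bigl(-\pi^2/(6\alpha)\bigr),\qquad\text{so}\qquad\prod_{j\ge 1}(1-q^j)^3\asymp(m/n)^{3/2}e^{-\pi^2 m/(2n)}.
\]
The hypothesis \eqref{m leq} is calibrated so that this target is at least $e^{\omega(n)}(\log n)^{(\ell+5)/2}/n$. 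The prefactor $(2f(q))^{\ell-1}$ stays between positive constants because the alternating series $f(q)=\sum_{j\ge 0}(-1)^j q^{J(j)}$ satisfies $f(q)\to 1/2$ as $q\to 1^-$ (a direct theta-integral estimate pairing consecutive terms).

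Choose a truncation threshold $J_0:=\lfloor\log^2 n\rfloor$, and split \eqref{xmynTell=sum} at $j=J_0$. For $\bs j$ in the main range $j\le J_0$, Stirling for $\binom{2n+\ell}{n-j}$ and the expansion
\[
\log\frac{\binom{n+m-1-\Sigma(\bs j)}{n-1}}{\binom{n+m-1}{n-1}}=-\Sigma(\bs j)\log\bigl(1+(n-1)/m\bigr)+O\bigl(\Sigma(\bs j)^2/m\bigr)
\]
(with $\Sigma(\bs j)=O(j^2)$) refine \eqref{Snmj,bounded} to
\[
S_{n,m}(\bs j)=2^{\ell-1}(2j+\ell)\binom{n+m-1}{n-1}C_n\,q^{\Sigma(\bs j)}\bigl(1+O(j^4/m+j^2/n)\bigr).
\]
Summing with the signs $\prod_\mu(-1)^{j_\mu}$ and extending $\bs j$ to all $\bs j\ge\bs 0$, Jacobi's identity \eqref{Jacobi} collapses the signed sum exactly as in Lemma~\ref{4} to $\ell f(q)^{\ell-1}\prod_{j\ge 1}(1-q^j)^3$, which produces the claimed main term. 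In the tail $j>J_0$, the crude bound \eqref{Snmj,uniform} factors through a Gaussian theta-tail of order $\binom{n+m-1}{n-1}C_n(\log n)^{(\ell-1)/2}J_0\exp(-cJ_0^2/\log n)$, which for our choice of $J_0$ is super-polynomially smaller than the target.

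The delicate step, and what I expect to be the main obstacle, is verifying that the accumulated main-range error is $o$ of the target. The pointwise errors $O(j^4/m)$ and $O(j^2/n)$ act on the \emph{unsigned} moment sums, which by theta-Gaussian integration satisfy $\sum_{\bs j}j^k q^{\Sigma(\bs j)}\asymp(m/n)^{(\ell+k)/2}$; these are much larger than the cancellation-sensitive signed sum. The combined absolute error thus becomes
\[
\binom{n+m-1}{n-1}C_n\cdot O\Bigl(\frac{(m/n)^{(\ell+5)/2}}{m}+\frac{(m/n)^{(\ell+3)/2}}{n}\Bigr)=\binom{n+m-1}{n-1}C_n\cdot O\bigl((\log n)^{(\ell+3)/2}/n\bigr),
\]
only a factor of $(\log n)\,e^{\omega(n)}$ below the target. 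This narrow margin is precisely what the correction $0.5(\ell+2)\log\log n$ in \eqref{m leq} supplies, with $\omega(n)$ swallowing the constants and the surplus $\log n$.
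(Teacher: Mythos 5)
Your proposal follows the same architecture as the paper's proof: start from the exact identity \eqref{xmynTell=sum}, split off a main range in $\bs j$ where each term may be Taylor-expanded into $2^{\ell-1}(2j+\ell)\binom{n+m-1}{n-1}C_n\,q^{\Sigma(\bs j)}$ times a small correction, collapse the signed main sum via Jacobi's identity \eqref{Jacobi}, estimate the main-range error by unsigned moment sums of theta-Gaussian type, and show the result is $o$ of the target by invoking Freiman's asymptotic for $\prod_j(1-q^j)$ together with hypothesis \eqref{m leq}. The tail is then killed by a crude Gaussian estimate. This is exactly what the paper does, so the two proofs are essentially the same.

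The only differences worth noting are technical and do not change the substance. First, you truncate at $j\le J_0=\lfloor\log^2 n\rfloor$, while the paper truncates at $\max_i j_i\le M=\lfloor a(1-q)^{-1}\rfloor\asymp\log n$; both work, with yours making the tail estimate cruder but even more comfortably super-polynomially small. Second, you keep the full Freiman factor $\prod_j(1-q^j)^3\asymp(m/n)^{3/2}e^{-\pi^2m/(2n)}$, whereas the paper drops the $(1-q)^{-3/2}$ piece when lower-bounding the Jacobi product, losing a factor $(m/n)^{3/2}\asymp(\log n)^{3/2}$ in the margin; the arithmetic still closes in both accounts, yours with a slightly wider margin. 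Third, you carry an extra $O(j^2/n)$ pointwise correction coming from the Catalan-type factor $\binom{2n+\ell}{n-j}$, which the paper subsumes under $O(\Sigma(\bs j)^2/m)$; for $m=\Theta(n\log n)$ and the dominant range $j\asymp\sqrt{m/n}$ the two corrections are of comparable size, so nothing is lost or gained. Finally, when you ``extend $\bs j$ to all $\bs j\ge\bs 0$'' before applying Jacobi, you should explicitly note that the difference between the truncated and the full signed sum is bounded by the same super-polynomially small tail estimate; the paper handles this step via the explicit tail functions $h_M(q)$ and $f_M(q)$, but your $J_0$ choice makes the same conclusion immediate. In sum: same proof, slightly different bookkeeping parameters.
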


\begin{proof} It suffices to consider the case $m/n\to\infty$, in which case $q\to 1$. We still have
$f(q)>1-q>0$, but $1-q\to 0$. However, by Tauberian theorem  for power series,
$f(1-) = 1/2$, whence $\liminf f(q)=1/2>0$.

As in the proof of Lemma \ref{4}, our starting point is the identity
\begin{align*}
&[x^my^n] T^{\ell}(x,y)=\sum_{\bs j=(j_1,\dots,j_{\ell})\ge \bs 0}
\binom{n+m-1-\Sigma(\bs j)}{n-1} \prod_{\mu=1}^{\ell} (-1)^{j_{\mu}}\\
&\quad\times\frac{2j+\ell}{2n+\ell}\binom{2n+\ell}{n-j},\quad j:=\sum_{\nu'}j_{\nu'},
\end{align*}
where $\Sigma(\bs j)=\sum_{\nu}J(j_{\nu})$ and $j=\sum_{\nu}j_{\nu}$, and we focus on $S_{n,m}(\bs j)$, the absolute
value of the $\bs j$-th summand in the sum. The uniform bound  \eqref{Snmj,uniform} 
continues to hold. 
Setting $M:=\lfloor a (1-q)^{-1}\rfloor$ for some $a>1$, we write the sum as $S_1+S_2$, where $S_1$ is the contribution of $\bs j$'s with $\max_i j_i \le M$ and $S_2$ is 
the contribution of the remaining $\bs j$'s. For the terms in $S_1$, analogously to 
\eqref{Snmj,bounded} we have
\[
S_{n,m}(\bs j)=\bigl(1+O(\Sigma(\bs j)^2/m)\bigr) 2^{\ell-1}(2j+\ell)\binom{n+m-1}{n-1}C_n\cdot 
q^{\Sigma(\bs j)}.
\]
Therefore, $S_1=S_{11}+R_1$, where
\begin{align}\label{asymp S_1, 1}
S_{11}&= \binom{n+m-1}{n-1}C_n 2^{\ell-1}
\sum_{j_1,\dots,j_{\ell}\atop \max j_i\le M}\,\,\left(\sum_{t =1}^{\ell}(2j_t+1)\right)\prod_{\mu=1}^{\ell} (-1)^{j_{\mu}} q^{J(j_{\mu})}				\\
&= \binom{n+m-1}{n-1}C_n \ell\, \sum_{j=0}^{M}(-1)^j (2j+1)q^{J(j)} \left(2\sum_{k=0}^{M}(-1)^k q^{J(k)}\right)^{\ell-1}, \notag
\end{align}
and 
\begin{equation}\label{|R1|<}
|R_1|\le_b \frac{1}{m}\binom{n+m-1}{n-1}C_n 
\left(\sum_{j\ge 0}j^5q^{J(j)}\right)
\left(\sum_{k\ge 0}q^{J(k)}\right)^{\ell-1}.
\end{equation}
For the last bound we have used $\Sigma(\bs j)^2(\sum_{\mu}j_{\mu})\le_b \sum_{\nu}j_{\nu}^5$ and the fact that $\ell$ is fixed.
Defining the functions
\begin{equation*}
h_M(q)=  \sum_{j=M+1}^{\infty}(-1)^j (2j+1)q^{J(j)}; \quad 
f_M(q)=\sum_{k=0}^{M}(-1)^k q^{J(k)},
\end{equation*}
and using \eqref{Jacobi}  on the last line of \eqref{asymp S_1, 1}, we write
\begin{equation}\label{S11,M}
S_{11}= \binom{n+m-1}{n-1}C_n \ell\,  \left( \prod_{j\ge 1} (1-q^j)^3 +h_M(q)\right) (2f_M(q))^{\ell-1}. 
\end{equation}
Now $q^{J(j)}\le \exp(-j^2(1-q)/2)$, and $x \exp(-x^2(1-q)/2)$ attains its maximum at 
$(1-q)^{-1/2}\ll a(1-q)^{-1}=M$. So 
\[
|h_M(q)| \le_b \int_M^{\infty} x\exp(-x^2(1-q)/2)=(1-q)^{-1}\exp\left(-\frac{a^2}{2(1-q)}\right).
\]
Also,
\[
\bigg| \sum_{k>M} (-1)^kq^{J(k)}\bigg| \le q^{J(M+1)} \le q^{a^2(1-q)^{-2}/2} \le \exp 
\left(-\frac{a^2}{2(1-q)} \right),
\]
where the last inequality follows from $x^{1/(1-x)}\le e^{-1}$ for any $x \in (0,1)$. So,
using Jacobi identity, 
\begin{multline*}
\sum_{0\le j\le M}(-1)^j (2j+1)q^{J(j)} =  \prod_{j\ge 1}(1-q^j)^3 +O\bigl((1-q)^{-1}e^{-a^2/2(1-q)}\bigr).
\end{multline*}
Here,  by Freiman's asymptotic formula (see Postnikov~\cite[Sect. 2.7]{Post}, and also
Pittel~\cite[Eq. 2.8]{Pittel}, \cite[Sect. 2]{Pittel'}),
\begin{equation}\label{Freiman}
\begin{aligned}
\prod_{j\ge 1}(1-q^j)=&\,\exp\left(-\frac{\pi^2}{6z}-\tfrac{1}{2}\log\frac{z}{2\pi}+O(|z|)\right)_{z=
-\log q}\\
=&\,\exp\left(-\frac{\pi^2}{6(1-q)}-\frac{1}{2}\log(1-q)+O(1)\right).
\end{aligned}
\end{equation}
Therefore
\[
\sum_{0\le j\le M}(-1)^j (2j+1)q^{J(j)}=\prod_{j\ge 1}(1-q^j)^3\left(1+O\bigl((1-q)^{1/2}e^{-(a^2-\pi^2)/2(1-q)}\bigr)\right).
\]
Also,  using $\lim f(q)=1/2>0$, 
\[
f_M(q) =f(q)- \sum_{k> M}(-1)^k q^{J(k)}= f(q)\left(1+O\bigl(e^{-a^2/2(1-q)}\bigr)\right).
\]
So, selecting $a=\pi \sqrt{3}$ say, \eqref{asymp S_1, 1} becomes 
\begin{equation}\label{S_1,1,explicit}
S_{11}= \left( 1+  O\bigl(e^{-(1-q)^{-1}}\bigr) \right)\ell \bigl(2 f(q)\bigr)^{\ell-1}
\binom{n+m-1}{n-1}C_n \prod_{j\ge 1}(1-q^j)^3.
\end{equation}
Furthermore, \eqref{|R1|<} together with the bounds 
\begin{equation}\label{bounds}
\sum_{j\ge 0}j^5q^{J(j)}=O((1-q)^{-3}),\quad 2\sum_{k\ge 0}q^{J(k)} \le 2(1-q)^{-1/2}
\end{equation}
yield
\begin{equation}\label{R11,explicit}
|R_1|\le_b \frac{1}{m}(1-q)^{-(\ell+5)/2}\binom{n+m-1}{n-1}C_n.
\end{equation}
Using Freiman's formula and the condition \eqref{m leq}, we have
\begin{equation*}
\begin{aligned}
\frac{m^{-1}(1-q)^{-(\ell+5)/2}}{\prod_{j\ge 1}(1-q^j)^3}&\le_b\exp(\pi^2m/2n)\,\frac{m^{(\ell+3)/2}} {n^{(\ell+5)/2}}\\
&\le_b (\log n)^{(\ell+3)/2}n^{-1}\exp(\pi^2m/2n-0.5\log\log n)\\
&\le_b\exp\left(\frac{\ell+2}{2}\log\log n-\log n+\frac{\pi^2m}{2n}\right)\\
&\le e^{-\omega(n)}\to 0.
\end{aligned}
\end{equation*}
So it follows from \eqref{S_1,1,explicit}  and \eqref{R11,explicit} that
\begin{equation}\label{S1approx}
S_1 =(1+o(1))\binom{n+m-1}{n-1}C_n \ell (2f(q))^{\ell-1}.
\end{equation}
It remains to show that $S_2$ is negligible compared to $S_1$.
It is easy to see that
\begin{align*}
|S_2| &\le_b \binom{n+m-1}{n-1}C_n\sum_{j_1,\dots,j_{\ell}\atop \max j_i> M}\, \prod_{\mu} q^{J(j_{\mu})}\sum_{t =1}^{\ell}(2j_t+1) \\
&\le_b \binom{n+m-1}{n-1}C_n\, \left(\sum_{j\ge 0}q^{J(j)}\right)^{\ell-2}\\
&\times\left[\sum_{j_1>M}(2j_1+1)q^{J(j_1)}\sum_{j\ge 0}q^{J(j)}
+\sum_{j_1\ge M}q^{J(j_1)} \sum_{j\ge 0} (2j+1)q^{J(j)}\right].
\end{align*}
Here
\begin{align*}
&\sum_{j_1\ge M}q^{J(j_1)}\le_b (1-q)^{-1/2}\exp\left( -\tfrac{a^2}{2(1-q)}\right),\\
&\sum_{j_1\ge M}(2j_1+1)q^{J(j_1)}\le_b (1-q)^{-3/2}\exp\left( -\tfrac{a^2}{2(1-q)}\right).
\end{align*}
Combining these bounds with \eqref{bounds}, we obtain
\begin{equation}\label{|S2|<}
|S_2|\le_b \binom{n+m-1}{n-1}C_n (1-q)^{-(\ell+2)/2}\exp\left( -\tfrac{a^2}{2(1-q)}\right).
\end{equation}
Since
\[
(1-q)^{-(\ell+2)/2}\exp\left( -\tfrac{a^2}{2(1-q)}\right)\le_b \left(\frac{m}{n}\right)^{(\ell+2)/2}
\exp\left(-\frac{a^2m}{2n}\right)\to 0,
\]
it follows from \eqref{S1approx} and \eqref{|S2|<} that $S_1\gg |S_2|$. This finishes the proof.
\end{proof}
\begin{Remark} Whether the constraint \eqref{m leq} can be relaxed to, say, $m=\Theta(n \log n)$
is, in our opinion, a hard open problem.
\end{Remark}
Next, we apply Lemma \ref{4} to find the number of cuts in a random linearized chord diagram. A \emph{cut} is a partition of $[2n]$ into two blocks $[2n_1]$ and $[2n]\setminus [2n_1]$ such that there is no chord joining two points from different blocks. Let $X_{n,m}$ be the random variable counting the cuts in the linearized chord diagram chosen uniformly at random among all diagrams with $m$ crossings. 
Notice upfront that
\[
\pr(X_{n,m}\ge 1)\ge \frac{T_{n-1,m}}{T_{n,m}},
\]
as $T_{n-1,m}$ counts the linearized chord diagrams with an arc from the point
$1$ to the point $2$. Therefore, for $m=O(n)$, Lemma \ref{4} implies that
\[
\lim\inf \pr(X_{n,m}\ge 1)\ge \liminf \frac{C_{n-1}}{C_n}
\cdot\frac{\binom{n+m-2}{n-2}}{\binom{n+m-1}{n-1}}=\liminf\frac{n-1}{4(n+m-1)}>0.
\]

\begin{Theorem}\label{thm: cuts} Suppose $m=O(n)$. For each $j\ge 0$,
\begin{equation}\label{XnmtoX}
 \pr(X_{n,m}=j)=(j+1)(1-p)^2p^j+o(1);\quad p=1-(2f(q))^{-1},
\end{equation}
where $f(q)>1/2$ and bounded away from $1/2$.
\end{Theorem}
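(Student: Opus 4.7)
The plan is to identify $X_{n,m}+1$ with the number of maximal indecomposable (no-cut) blocks in the unique concatenation decomposition of a linearized chord diagram, and then compute the probability generating function (PGF) of the block count directly from Lemma~\ref{4}. Since concatenation of linearized diagrams preserves chord and crossing counts, and every diagram with $n\ge 1$ chords factors uniquely into maximal indecomposable blocks, we have $T(x,y) = 1/(1-T_c(x,y))$ for the generating function $T_c$ of indecomposable diagrams; a diagram with $j$ cuts has exactly $j+1$ blocks, so $\pr(X_{n,m}=j) = \pi_{j+1}$ where $\pi_\ell := [x^m y^n]T_c^\ell/T_{n,m}$. Using $(1-z)T+z = 1+(1-z)(T-1)$ and expanding $zT_c/(1-zT_c)=z(T-1)/(1+(1-z)(T-1))$ as a geometric series in $(T-1)$,
\[
\phi_{n,m}(z) := \sum_{\ell\ge 1}\pi_\ell z^\ell = \frac{z}{T_{n,m}}\sum_{k \ge 0}(z-1)^k\,[x^m y^n](T-1)^{k+1}.
\]

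For each fixed $k$, the binomial expansion of $(T-1)^{k+1}$ combined with Lemma~\ref{4} yields
\[
\frac{[x^m y^n](T-1)^{k+1}}{T_{n,m}} \longrightarrow \sum_{r=1}^{k+1}\binom{k+1}{r}(-1)^{k+1-r}r\,(2f(q))^{r-1} = (k+1)(2f(q)-1)^k,
\]
the last equality being $\frac{d}{dt}(t-1)^{k+1}$ evaluated at $t=2f(q)$. Summing over $k$ (justified below) then produces
\[
\phi_{n,m}(z) \longrightarrow \frac{z}{(1-(z-1)(2f(q)-1))^2} = \frac{(1-p)^2 z}{(1-pz)^2},
\]
precisely the PGF of the distribution $\pi^*_\ell = \ell(1-p)^2 p^{\ell-1}$ with $p = 1-1/(2f(q))$. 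Extracting the coefficient of $z^{j+1}$ gives~\eqref{XnmtoX}.

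The main obstacle is justifying the swap of $\lim_{n\to\infty}$ with the $k$-sum, complicated by the drift of $q=m/(m+n)$ with $(n,m)$. The hypothesis $m=O(n)$ confines $q$ to a compact subset of $[0,1)$, so $2f(q)-1$ stays bounded below $1$; the uniform estimate~\eqref{Snmj,uniform} from the proof of Lemma~\ref{4} then yields a bound $|[x^m y^n](T-1)^{k+1}/T_{n,m}|=O(C^{k+1})$ with $C$ independent of $(n,m)$, which for $z\in(0,1)$ dominates the tail of the $k$-series and legitimizes the interchange of limit and sum. Pointwise convergence of the PGFs on $(0,1)$ then implies coefficient convergence by analyticity of PGFs in the open unit disc, completing the proof.
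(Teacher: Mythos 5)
Your proposal is correct in spirit and, once the routing through $T_c$ is unwound, performs essentially the same computation as the paper. The paper computes the binomial moments $\ex\bigl[\binom{X_{n,m}}{k}\bigr]=[x^m y^n](T-1)^{k+1}/T_{n,m}$ directly (each summand $\prod_i T_{n_i,m_i}$ corresponds to a choice of $k$ of the cuts), applies Lemma~\ref{4}, and recognizes the resulting $\ex[z^{X}]=\sum_k(z-1)^k(k+1)(2f(q)-1)^k$ as the PGF of a sum of two i.i.d.\ geometrics. You reach the very same intermediate quantity $[x^my^n](T-1)^{k+1}/T_{n,m}$ via the block decomposition $T=1/(1-T_c)$; the manipulation of $zT_c/(1-zT_c)$ into $z\sum_k(z-1)^k(T-1)^{k+1}$ is exactly the generating-function form of the binomial transform relating the $\pi_\ell$'s to the binomial moments, so the analytic core and the final identity are identical.

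Two caveats. First, the domination constant is not $\le 1$: expanding $(T-1)^{k+1}$ by the binomial theorem and applying \eqref{Snmj,uniform} gives $|[x^m y^n](T-1)^{k+1}|/T_{n,m}\le_b (k+1)(1+2g(q))^{k+1}$ with $g(q)=\sum_{j\ge 0}q^{J(j)}\ge 1$, so $C\ge 3$; hence the dominated-convergence interchange is valid only for $z\in(1-C^{-1},1)$, not all of $(0,1)$ as you claim. The Vitali/Montel device you then invoke needs only pointwise convergence on a set with an accumulation point, so the argument still closes, but the stated ``for $z\in(0,1)$'' is incorrect as written. Second, and more substantively, you never address the part of the statement asserting that $f(q)>1/2$ and is bounded away from $1/2$, which is what ensures $p\in(0,1)$ and thus that the right-hand side of \eqref{XnmtoX} is a genuine probability mass function. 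Nonnegativity of the limiting coefficients gives $\liminf f(q)\ge 1/2$ for free, but the strict separation requires the observation, recorded in the paper just before the theorem and used inside its proof, that $\pr(X_{n,m}\ge 1)\ge T_{n-1,m}/T_{n,m}$, and that Lemma~\ref{4} shows this ratio is bounded away from zero. Your proof should invoke that bound explicitly.
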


\begin{Remark} Using a Tauberian theorem one can show that
\[
f(x) = 1/2 +O((1-x)^{1/2}),\quad x\uparrow 1.
\]
Therefore, if $m/n$ is large then $f(q)$ is close to $1/2$, whence $p$ is close to zero.  
Consequently, whp, there is no cut in the random linearized chord diagram when $m/n$ tends to infinity. At the other extreme, $p=1/2$ for $m=0$, whence
\[
\lim_{n\to\infty} \pr(X_{n,0}=j)=(j+1)2^{-(j+2)},\quad j\ge 0.
\]
Lastly, a byproduct of this Theorem is a pure-calculus inequality $f(x)>1/2$ for $x\in [0,1)$, which seems hard to prove out of the context of the chord diagrams.
\end{Remark}

\begin{proof}[Proof of Theorem \ref{thm: cuts}]
Observe that
\begin{align*}
\ex\left[\binom{X_{n,m}}{k}\right]=&\,\frac{1}{T_{n,m}}\sum_{(n_1,m_1),\dots,(n_{k+1},m_{k+1})\atop
\sum_in_i =n,\,\sum_jm_j=m;\,\, n_1,\dots, n_{k+1}>0}
\prod_{i=1}^{k+1} T_{n_i,m_i}\\
=&\,\frac{1}{T_{n,m}}\,[x^my^n] (T(x,y)-1)^{k+1}\\
=&\,\frac{1}{T_{n,m}}\sum_{\ell =0}^{k+1}(-1)^{k+1-\ell}\binom{k+1}{\ell} [x^my^n] (T(x,y))^{\ell}\\
=&\,\sum_{\ell=0}^{k+1}(-1)^{k+1-\ell}\binom{k+1}{\ell}\frac{T_{n,m}^{(\ell)}}{T_{n,m}}.
\end{align*}
So, by Lemma \ref{4},
\begin{align*}
\ex\left[\binom{X_{n,m}}{k}\right]=&\, o(1)+\sum_{\ell=0}^{k+1}(-1)^{k+1-\ell}\ell\binom{k+1}{\ell}
(2f(q))^{\ell-1}\\
=&\,(k+1)(2f(q)-1)^k+o(1).
\end{align*}

In particular, it follows that $\liminf f(q)\ge 1/2$. If $\liminf f(q)=1/2$ then $\ex[X_{n.m}]\to 0$
and $\pr(X_{n,m}>0)\to 0$, which we ruled out earlier. Thus $\liminf f(q)>1/2$ if $m=O(n)$,
which effectively proves that $f(x)>1/2$ for all $x\in [0,1)$. 
By the last equation, $X_{n,m}$ is asymptotic, in distribution, to $X=X_{m/n}$, such that
\[
\ex\left[\binom{X}{k}\right]=(k+1)(2f(q)-1)^k.
\]
Notice that, for $z>0$ small enough,
\begin{align*}
\ex\bigl[z^X\bigr]=&\,\ex\left[(1+(z-1))^X\right]=\sum_{k\ge 0}(z-1)^k\ex\left[\binom{X}{k}\right]\\
=&\,\sum_{k\ge 0}(z-1)^k(k+1)(2f(q)-1)^k\\
=&\,\frac{1}{\bigl[1-(z-1)(2f(q)-1)\bigr]^2}\\
=&\,\left(\frac{1-p}{1-zp}\right)^2;\quad p=p(q):=1-(2f(q))^{-1};
\end{align*}
($p(q)\in (0,1)$ since $f(q)>1/2$ for $q<1$). Therefore, $X\overset{\mathcal D}\equiv Y^\prime + Y^{\prime\prime}$, where $Y^\prime$ and  $Y^{\prime
\prime}$ are independent copies of the geometric $Y$, 
\[
\pr(Y=j)=(1-p) p^j,\quad j\ge 0.
\]
Finally,
\begin{align*}
\left(\frac{1-p}{1-zp}\right)^2=&\,(1-p)^2\sum_{j\ge 0} (-1)^j\binom{-2}{j}p^j z^j\\
=&\,(1-p)^2\sum_{j\ge 0}(j+1)p^jz^j, \qedhere
\end{align*}
which implies that 
\[
\pr(X_{n,m}=j)=o(1)+\pr(X=j)=(1-p)^2 (j+1)p^j +o(1),\quad j\ge 0.
\]
\end{proof}

\begin{Remark} Intuitively, whp all the cuts are relatively close to the point $1$ or point
$2n$, and the numbers of those ``left'' and ``right'' cuts are asymptotically independent,
each close to the geometric $Y$. We will prove the first part of this conjecture  in the next
section, as an application of an upper bound for $T_{\nu,\mu}$ that holds for all values of
the parameters $\nu$ and $\mu$.
\end{Remark}

\section{Bounds for $T_{n,m}$}\label{sec: bounds}
In this section we will give some bounds on $T_{n,m}$. To this end, we need to introduce
another double-index sequence $\{I_{n,m}\}$, where $I_{n,m}$ denotes the number 
of permutations of $[n]$ with $m$ inversions. Each of those $I_{n,m}$ permutations 
$p=(p_1,\dots,p_n)$ gives
rise to an \emph{inversion sequence} $\bs x=(x_1,\dots,x_n)$: $x_j$ is the number of pairs $(p_i,p_j)$ such that
$i<j$ and $p_i>p_j$. Obviously $x_i\le i-1$ and $\sum_ix_i =m$. Conversely, every such
sequence $\bs x$ determines a unique permutation $p$ such that $\bs x$ is $p$'s inversion
sequence. Existence of this bijective correspondence implies a classic identity
\begin{equation}\label{Inm=[.]}
I_{n,m}=[z^m]\prod_{j=0}^{n-1} (1+z+\cdots+z^j)=[z^m] \prod_{j=1}^{n} \frac{1-z^j}{1-z}.
\end{equation}
Clearly, $I_{n,m}$ is at most the the number of nonnegative integer solutions of the equation
$x_1+\cdots+x_n=m$, i.e., 
\begin{equation}\label{upper I_n,m}
I_{n,m} \le {n+m-1 \choose n-1}.
\end{equation}

\begin{Lemma}\label{lem: asym IS}
Suppose that $m/n \to \infty$ and $m=o\bigl(n^{3/2}\bigr)$. Then,
\[
I_{n,m} \ge {n+m-1 \choose n-1} \exp\left( -\frac{\pi^2m}{6n} +O(\log n)\right).
\]
\end{Lemma}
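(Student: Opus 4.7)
The plan is to recast the generating-function identity \eqref{Inm=[.]} as a probabilistic identity via an exponential tilt. For $q\in(0,1)$ to be chosen, let $X_1,\dots,X_n$ be independent with $P_q(X_j=k) = (1-q)\,q^{k}/(1-q^{j})$ for $0\le k\le j-1$, i.e., $X_j$ is a geometric$(1-q)$ variable conditioned on $\{0,\dots,j-1\}$. Multiplying the PMFs over $j$, the joint probability at any vector $(k_1,\dots,k_n)$ with $\sum k_j=m$ equals $(1-q)^n q^m/\prod_j(1-q^j)$, so summing over the $I_{n,m}$ admissible vectors yields
\[
I_{n,m} \;=\; \frac{\prod_{j=1}^n(1-q^{j})}{(1-q)^{n}\,q^{m}}\;P_q\!\left(\sum_{j=1}^n X_j=m\right).
\]
I take $q=q^*:=m/(n+m)$, the saddle point of the ``unconstrained'' generating function $(1-z)^{-n}z^{-m}$.

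Three estimates then combine to give the lemma. First, $(1-q^{*})^{-n}(q^{*})^{-m}=\bigl(\tfrac{n+m}{n}\bigr)^{n}\bigl(\tfrac{n+m}{m}\bigr)^{m}$, so by Stirling
\[
\log\bigl[(1-q^{*})^{-n}(q^{*})^{-m}\bigr] \;=\; \log\binom{n+m-1}{n-1} + O(\log n).
\]
Second, $m=o(n^{3/2})$ forces $n(1-q^{*})=n^{2}/(n+m)\to\infty$, so $\prod_{j>n}(1-(q^{*})^{j})=1+o(1)$, and Freiman's formula \eqref{Freiman} at $z=-\log q^{*}\sim n/m$ (using $-\pi^{2}/(6(1-q^{*}))=-\pi^{2}(n+m)/(6n)=-\pi^{2}m/(6n)+O(1)$, and $\tfrac{1}{2}\log(z/2\pi) = O(\log n)$) gives
\[
\prod_{j=1}^{n}\bigl(1-(q^{*})^{j}\bigr) \;=\; \exp\!\left(-\frac{\pi^{2}m}{6n}+O(\log n)\right).
\]

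For the probability factor, a direct calculation yields $E_{q^{*}}[X_{j}]=q^{*}/(1-q^{*})-j(q^{*})^{j}/(1-(q^{*})^{j})$, whence $E_{q^{*}}[\sum_j X_{j}]=m-O(m^{2}/n^{2})$; and $\mathrm{Var}_{q^{*}}(X_{j})\le q^{*}/(1-q^{*})^{2}$ yields $\sigma^{2}:=\mathrm{Var}_{q^{*}}(\sum_j X_{j})=O(m^{2}/n)$. Since $m=o(n^{3/2})$, the mean shift $O(m^2/n^2)$ is $o(\sigma)$, and a local central limit theorem for sums of independent integer-valued (but not identically distributed) variables gives $P_{q^{*}}(\sum_j X_{j}=m)=\Omega(1/\sigma)\ge n^{-O(1)}$. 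Combining the three estimates yields $\log I_{n,m}\ge\log\binom{n+m-1}{n-1}-\pi^{2}m/(6n)+O(\log n)$, as claimed. The main difficulty is this last step, since one needs a probability lower bound at the specific integer $m$ rather than a mere concentration estimate; the non-degeneracy hypothesis of the local CLT is satisfied here because for $j\gtrsim 1/(1-q^{*})\sim m/n$ the tilted variable $X_{j}$ is essentially $\mathrm{Geom}(1-q^{*})$, and a positive fraction of indices $j\le n$ lie in this regime and contribute a uniform bound on the decay of the characteristic function away from zero.
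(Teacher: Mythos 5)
Your proposal follows the same overall plan as the paper's proof: an exponential tilt to rewrite $I_{n,m}$ as $\rho^{-m}\prod_{j=1}^n\frac{1-\rho^j}{1-\rho}\cdot\pr(\|\bs Y\|=m)$, Stirling and Freiman's formula for the first two factors, and a local limit theorem for the last one. The auxiliary calculations you do (Stirling for $(1-q^*)^{-n}(q^*)^{-m}$, Freiman for $\prod(1-(q^*)^j)$, the mean shift $O(m^2/n^2)=o(\sigma)$) are correct and match the paper's.

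The gap is exactly where you flag it: the local limit theorem for $\pr(\sum_j X_j = m)$. You gesture at a characteristic-function argument (``a positive fraction of indices $j\gtrsim 1/(1-q^*)$ contribute a uniform bound on the decay away from zero''), but this is not developed into a proof, and it is not a triviality — the $X_j$ are nonidentically distributed, truncated geometrics with ranges of very different lengths, and some care is needed to get a quantitative, uniform-in-$n$ lower bound at the specific lattice point $m$. The paper sidesteps characteristic functions entirely by exploiting a structural property: it proves that $\|\bs Y\|$ has a \emph{properly} log-concave distribution (this requires an inductive argument via Canfield's identity, since for $j\ge 3$ the summands $Y_j$ themselves are not properly log-concave; Menon's theorem alone gives only ordinary log-concavity), establishes a Berry--Esseen bound via a fourth-moment computation, and then invokes the Bender--Canfield local limit theorem for properly log-concave sequences satisfying a Berry--Esseen estimate. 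This yields $\pr(\|\bs Y\|=m)=\Theta(n^{1/2}/m)$ cleanly. So while your route is plausible, the hard step of the lemma is precisely the one you leave as a sketch, and the paper's resolution of it via proper log-concavity is the essential technical input you would need to supply (or replace with a fully worked characteristic-function argument).

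A minor secondary point: the paper devotes some effort to showing the optimizing tilt $\rho^*$ (the stationary point of $L$) exists and satisfies $L(\rho^*)=L(q)+o(1)$; you avoid this by arguing directly that $m$ is within $o(\sigma)$ of the tilted mean at $q=q^*$, which is a legitimate shortcut once the local limit theorem is in hand, since it only requires $m$ to be in the bulk of the distribution, not at its exact mode.
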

\begin{proof}
 Pick $\rho\in (0,1)$, and introduce the sequence $\bs Y=(Y_1,\dots,Y_n)$ of {\it independent\/} random variables
such that 
\[
\pr(Y_i=j)=\frac{(1-\rho)\rho^j}{1-\rho^i},\quad 0\le j\le i-1;
\]
so
\[
\ex\bigl[z^{Y_i}\bigr]=\frac{1-\rho}{1-\rho^i}\cdot\frac{1-(\rho z)^{i}}{1-\rho z},\quad 1\le i\le n.
\]
Then \eqref{Inm=[.]} becomes
\begin{align}
I_{n,m}&=\rho^{-m}[z^m]\prod_{i=1}^n\frac{1-(\rho z)^i}{1-\rho z}
=\rho^{-m}\prod_{k=1}^n\frac{1-\rho^i}{1-\rho}\,[z^m]\prod_{i=1}^n\ex\bigl[z^{Y_i}\bigr]\notag\\
&=\rho^{-m}\prod_{k=1}^n\frac{1-\rho^i}{1-\rho}\,\pr(\|\bs Y\|=m),\label{rhobound}
\end{align}
where $\|\bs Y\|:=\sum_iY_i$. In particular,
\begin{equation}\label{Inm,Cher}
I_{n,m}\le \rho^{-m}\prod_{k=1}^n\frac{1-\rho^i}{1-\rho},
\end{equation}
and we get the best upper bound by selecting $\rho^*$ that minimizes the right side of \eqref{Inm,Cher}. Since $I_{n,m}$
does not depend on $\rho$, this $\rho^*$ maximizes $\pr(\|\bs Y\|=m)$. What remains
is to prove the existence of $\rho^*$ and to find an asymptotic formula for  that probability, i.e., to prove a local 
limit theorem for $||\bs Y||$. 

Crucially, the distribution of $\sum_i Y_i$ is log-concave, i. e.,
\begin{equation}\label{log-conc,def}
\pr(\|\bs Y\|=j)^2\ge \pr(\|\bs Y\|=j-1)\pr(\|\bs Y\|=j+1),\quad j\ge 0.
\end{equation}
The reason is that 
each $Y_i$ has a log-concave distribution and the convolution of log-concave distributions
is log-concave as well, Menon~\cite{Menon}. Even stronger, in terminology of Canfield
\cite{Canfieldloc}, the distribution of $\|\bs Y\|$ is {\it properly\/} log-concave, meaning that (a) the
range of $\|\bs Y\|$ has no gaps, and (b)
the equality in \eqref{log-conc,def} holds only if $\pr(\|\bs Y\|=j)=0$. 

Indeed, $Y_1=0$ is 
properly log-concave distributed, and (induction step)  proper log-concavity of $Z_{s+1}:=\sum_{r=1}^{s+1}Y_r$ for $s\ge 1$ follows from proper log-concavity of $Z_s:=\sum_{r=1}^{s}Y_r$ and the identity \cite{Canfieldloc}
\begin{multline}\label{CanIdentity}
P_{s+1,\nu}^2 -P_{s+1,\nu-1}P_{s+1,\nu+1}
=\sum_{\alpha<\beta}\bigl(P_{s,\alpha}P_{s,\beta-1}-P_{s,\alpha-1}P_{s,\beta}\bigr)
\\
\times \bigl(p_{s+1,\nu-\alpha}\,p_{s+1,\nu-\beta+1}-p_{s+1,\nu-\alpha+1}\,
p_{s+1,\nu-\beta}\bigr),
\end{multline}
$P_{t,\mu}:=\pr(Z_t=\mu)$, $p_{t,\mu}:=\pr(Y_t=\mu)$. Here is how.  Each summand on the RHS of
\eqref{CanIdentity} is non-negative as both $Z_s$ and $Y_{s+1}$ are log-concave,
and their respective ranges,   $[0,1,\dots, \binom{s}{2}]$ and  $[0,1,\dots,s]$, have no gaps.
If $\nu\le \binom{s}{2}$, we see that the summand for $\alpha=\nu$, $\beta=\nu+1$ is
\[
(P_{s,\nu}^2 - P_{s,\nu-1}P_{s,\nu+1})p_{s+1,0}^2>0,
\]
because, by inductive hypothesis,  $\{P_{s,t}\}$ is properly log-concave and $P_{s,\nu}>0$. If $\binom{s}{2}<\nu\le 
\binom{s+1}{2}$, we consider $\alpha=\nu-s$ and $\beta=\alpha+1$. Then
\begin{align*}
\alpha=\nu-s&\le \binom{s+1}{2}-s=\binom{s}{2},\\
\alpha=\nu-s&\ge \binom{s}{2}+1-s=\binom{s-1}{2}\ge 0,
\end{align*}
whence $P_{s,\alpha}>0$. Then the corresponding summand on the RHS of \eqref{CanIdentity} is
\[
(P_{s,\alpha}^2 - P_{s,\alpha-1}P_{s,\alpha +1})(p_{s+1,s}^2-p_{s+1,s+1}p_{s+1,s-1})>0,
\]
because, by inductive hypothesis, $P_{s,\alpha}^2 - P_{s,\alpha-1}P_{s,\alpha +1}>0$, and  $p_{s+1,s}>0$, $p_{s+1,s+1}=0$.

\begin{Remark} In Canfield \cite{Canfieldloc} the striking identity \eqref{CanIdentity} was used to show that
the convolution operation preserves the proper log-concavity. We had to use this identity
differently, i.e.\  inductively, because none of $Y_3,\dots,Y_n$ is {\it properly\/} log-concave. Notice also that for $\rho = 1$ our claim reduces to proper logconcavity
of $I_{n,m}$ for every $n\ge1$. The usual logconcavity of this sequence is long known, of course.
More recently B\'{o}na~\cite{Bona05} found a purely combinatorial proof of this property,
a proof that does not rely on Menon's theorem.
\end{Remark}

For $x\in (0,1)$, introduce 
\begin{align*}
L(x):&=\log \left(x^{-m}\prod_{k=1}^n\frac{1-x^k}{1-x}\right) \\
&=-m\log x +\sum_{k=1}(\log (1-x^k) - \log (1-x)).
\end{align*}
The stationary points of $L(x)$ are the roots, if any exist, of
\[
L^\prime(x)=\frac{1}{x}\left(n\frac{x}{1-x} -m -\sum_i\frac{ix^i}{1-x^i}\right)=0.
\]
Since the function within the braces is strictly increasing, there can be at most one stationary point,
and it is necessarily the point where $L(x)$ attains its minimum. Pick a constant $A>0$ and
introduce $\rho=q(1+A/n)$, $q=m/(m+n)$. Since $m/n^2\to 0$,
\[
1-\rho =(1-q)\bigl(1+O(mn^{-2})\bigr),
\]
whence
\[
n\frac{\rho}{1-\rho}- m=A\left(\frac{m}{n}\right)^2 +O(m/n).
\]
Further, approximating the sum $\sum_i i\rho^i/(1-\rho^i)$ by the corresponding integral we obtain
\begin{align*}
\sum_i\frac{ix^i}{1-x^i}&=\frac{1}{(\log (1/\log\rho))^2}\int_0^{\infty}\frac{x}{e^x-1}\,dx+O((1-\rho)^{-1})\\
&=\frac{\pi^2}{6}\left(\frac{m}{n}\right)^2 +O(m/n).
\end{align*}
Therefore, for $n$ large enough, $L^\prime(\rho)<0$ for $A<\pi^2/6$ and $L^\prime(\rho)>0$
if $A> \pi^2/6$. Thus the equation $L^\prime(x)=0$ does have a root $\rho^*$, and
$\rho^* = q (1+O(n^{-1})$. Furthermore,  uniformly for $x$ between $\rho:=\rho^*$ and $q$, 
\begin{align*}
L^{\prime\prime}(x) &=\frac{m}{x^2}+\frac{n}{(1-x)^2}+\sum_i\left(\frac{ix^{i-2}}{1-x^i}-\frac{i^2x^i}
{(1-x^i)^2}\right)\\
&=O(m+m^2/n+m^3/n^3)=O(m^2/n).
\end{align*}
Therefore
\begin{equation*}
L(\rho) = L(q) +O\bigl(m^2n^{-1}(\rho-q)^2\bigr)=L(q)+O(m^2/n^3)=L(q)+o(1),
\end{equation*}
since $m=o(n^{3/2})$.

By independence of $Y_1,\dots,Y_n$, using Berry-Esseen inequality (Feller~\cite{Feller},
Ch. XVI, Section 5),
\[
\max_{x\in \mathbb R}\left|\pr\left(\|\bs Y\| \le  \ex[\|\bs Y\|] + x\sigma(\|\bs Y\|)\right)-
\frac{1}{\sqrt{2\pi}}\int_{-\infty}^xe^{-y^2/2}\,dy\right|\le 6\frac{r_3}{\sigma^3},
\]
where
\begin{align*}
\sigma^2 =&\var(\|\bs Y\|)=\sum_{i=1}^n\var(Y_i)=\sum_{i=1}^n\ex\bigl[(Y_i-\ex[Y_i])^2\bigr],\\
r_3=&\,\sum_{i=1}^n\ex\bigl[|Y_i-\ex[Y_i]|^3\bigr].
\end{align*}
To compute $\sigma^2$ we use 
\[
\ex\bigl[z^{\|\bs Y\|}\bigr] =\prod_{i=1}^n \ex\bigl[z^{Y_i}\bigr]=\prod_{i=1}^n
\frac{1-\rho}{1-\rho^i}\cdot\frac{1-(\rho z)^i}{1-\rho z},
\]
and
\[
\left.\frac{d^2}{dz^2}\ex\bigl[z^{\|\bs Y\|}\bigr]\right|_{z=1}=\ex\bigl[(\|\bs Y\|)_2\bigr].
\]
Computing the derivative and bounding the resulting sum by the integral we obtain
\begin{align*}
\sigma^2=&\,\ex\bigl[(\|\bs Y\|)_2\bigr] +\ex[\|\bs Y\|] - \bigl(\ex[\|\bs Y\|] \bigr)^2\\
=&\,n\,\frac{\rho}{(1-\rho)^2}-\sum_{i=1}^n\frac{i^2\rho^i}{(1-\rho^i)^2}\\
=&\,(1+o(1))\,\frac{m^2}{n} - \Theta((m/n)^3)
=\,(1+o(1))\,\frac{m^2}{n}.
\end{align*}
Similar, but more protracted, computations lead to
\begin{align*}
r_4:=&\,\sum_{i=1}^n \ex\bigl[(Y_i-\ex[Y_i])^4\bigr]=(1+o(1))\frac{n}{(1-\rho)^4}\\
=&\,(1+o(1))\,\frac{m^4}{n^3}.
\end{align*}
Therefore,
\[
r_3\le n^{1/4} (r_4)^{3/4}=(1+o(1))\frac{m^3}{n^2}.
\]
Consequently, for $n$ large enough,
\begin{equation}\label{Berry}
\max_{x\in \Bbb R}\left|\pr\left(\|\bs Y\| \le  \ex[\|\bs Y\|] + x\sigma(\|\bs Y\|)\right)-
\frac{1}{\sqrt{2\pi}}\int_{-\infty}^xe^{-y^2/2}\,dy\right|\le 7n^{-1/2}.
\end{equation}
If we write $7n^{-1/2}=K/\sigma$ then 
\[
\frac{K}{\sqrt{\sigma}}=7\sqrt{\frac{\sigma}{n}}\le 8\sqrt{\frac{m}{n^{3/2}}}\to 0,
\]
since $m\ll n^{3/2}$. Applying Canfield's quantified version of Bender's local limit theorem for
properly log-concave distributions obeying Berry-Esseen estimate, (Bender~\cite{Benderloc}, 
Canfield~\cite{Canfieldloc}), we conclude that
\begin{equation}\label{pr(Y=m)=}
\pr(\|\bs Y\|=m)=\frac{1+o(1)}{\sqrt{2\pi \var(\|\bs Y\|)}}=\Theta(n^{1/2}m^{-1}).
\end{equation}
Next,
by Lemma 3.11 in \cite{AP}, we have
\[
\prod_{j=1}^{n}(1-q^j)= \left( 1+ O(m^2/n^3) \right)\prod_{j=1}^{\infty}(1-q^j),
\]
and, by the proof of Lemma 3.14 in \cite{AP}, we have
\[
\prod_{j=1}^{\infty}(1-q^j) \sim  K\cdot \exp\left( -(\pi^2/6)(m/n)+(1/2)\log(m/n)\right),
\]
where $K= \sqrt{2\pi}\cdot e^{-\pi^2/12}$. Finally,
\[
q^{-m}(1-q)^{-n}=\frac{(m+n)^{m+n}}{m^mn^n}\ge \binom{n+m}{n}\ge \binom{n+m-1}{n-1}.
\]
Combining the pieces, we obtain 
\begin{equation*}
I_{n,m} \ge {n+m-1 \choose n-1}  \exp\left( -\frac{\pi^2m}{6n}+O(\log n)\right). 
\end{equation*}
\end{proof}
The next Lemma provides an upper bound for $T_{n,m}$ applicable to all $m$, and
a lower bound for $T_{n,m}$ in the case when $m$ meets the condition of Lemma
\ref{lem: asym IS}, i.e., far beyond the constraint of Lemma \ref{ell components}.
\begin{Lemma}\label{bound,allmn} 
$(i)$ For all $m,n\ge 0$,
\begin{equation} \label{uppersimple}
T_{n,m}\le C_n I_{n,m} \le C_n\binom{n+m-1}{n-1}. 
\end{equation}
$(ii)$ If $n \to \infty$ and $m=o\bigl(n^{3/2}\bigr)$, then
\begin{equation}\label{lowersimple}
T_{n,m}\ge_b \, \exp\bigl[-2(m/n)(\log n)\bigr]C_n\,\binom{n+m-1}{n-1}.
\end{equation}
\end{Lemma}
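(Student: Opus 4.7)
My plan is as follows.

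Part (i). I would prove the first inequality $T_{n,m}\le C_n I_{n,m}$ by an explicit injection from chord diagrams with $n$ chords and $m$ crossings into (Dyck words of length $2n$) $\times$ (inversion sequences of $n$-permutations with $m$ inversions). Given a chord diagram $\mathcal D$, reading the endpoints left-to-right and labelling each as opener $O$ or closer $C$ produces a Dyck word $W(\mathcal D)$, one of $C_n$ possibilities. Index the closers from right to left by $j=1,\dots,n$: when the $j$-th (from the right) is being matched in the left-to-right scan, let $h'_j$ be the number of currently open chords and set $y_j\in\{0,\dots,h'_j-1\}$ to be the number of open chords whose opener lies strictly to the right of the matched chord's opener. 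Since $j-1$ closers still lie ahead at that moment, $h'_j\le j$ and so $y_j\le j-1$, making $\mathbf y$ an inversion sequence; moreover $y_j$ equals the number of crossings created at this closing step, hence $\sum_j y_j = m$. The map $\mathcal D\mapsto(W,\mathbf y)$ is injective, and summing over the $C_n$ shapes (each contributing at most $I_{n,m}$ sequences) yields $T_{n,m}\le C_n I_{n,m}$. The second inequality is the stars-and-bars bound \eqref{upper I_n,m}.

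Part (ii). I would split the range $m=o(n^{3/2})$ at the Lemma \ref{ell components} boundary $m_0=\tfrac{2}{\pi^2}n(\log n - O(\log\log n))$. For $m\le m_0$, Lemma \ref{ell components} with $\ell=1$ gives $T_{n,m}\sim C_n\binom{n+m-1}{n-1}\prod_{j\ge 1}(1-q^j)^3$, and Freiman's formula \eqref{Freiman} yields $\prod(1-q^j)^3 \ge \exp(-\pi^2 m/(2n)+O(\log(m/n)))$. Since $\pi^2/2<2\log n$ for all large $n$, this quantity exceeds $\exp(-2(m/n)\log n)$ up to a harmless factor, closing this case.

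For $m_0<m=o(n^{3/2})$ the asymptotic formula breaks down, and I would instead use the bijection of (i) coupled with Lemma \ref{lem: asym IS}. The quotient $T_{n,m}/(C_n I_{n,m})$ is exactly the probability that a uniformly random pair $(W,\mathbf a)$ -- with $W$ a uniform Dyck word and $\mathbf a$ a uniform inversion sequence with $\sum_j a_j=m$ -- satisfies $a_j<h'_j(W)$ for all $j$. I would bound the complementary probability by a union bound over $j$: for each $j$, use the conditional geometric representation of $\mathbf a$ from the proof of Lemma \ref{lem: asym IS} to get a tail $\Pr(a_j\ge k)\lesssim\rho^k$ with $\rho=m/(m+n)$, and a standard reflection/ballot argument for the distribution of $h'_j(W)$ under the uniform measure on Dyck words. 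Assembling these should give $T_{n,m}\ge_b C_n I_{n,m}\exp(-O(m(\log n)/n))$, and combining with Lemma \ref{lem: asym IS} (which loses only $\exp(-\pi^2 m/(6n))$ from $\binom{n+m-1}{n-1}$) delivers the stated bound.

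The main obstacle I anticipate is the union-bound estimate in this second regime, where $\Pr(a_j\ge h'_j(W))$ must be controlled uniformly in $j$: the boundary $j=O(1)$ is trivial (the constraint $a_j\le j-1$ is already tiny), but the regime $j$ close to $n$ is delicate because the right tail of $a_j$ is thick (since $\rho\to 1$) while $h'_j(W)$ concentrates near values that grow only like $\sqrt{j(n-j)/n}$; reconciling these tails to land at a net loss of $O(m(\log n)/n)$ is the technically demanding step.
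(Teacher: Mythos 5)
Your part (i) is essentially the paper's argument in different clothes: the paper encodes a diagram by (Catalan sequence, intersection sequence) indexed by chord-opener order, while you index by closer order and track the number of already-open crossing partners; both are injections into a Catalan set of size $C_n$ times the set of inversion sequences summing to $m$, and the second inequality is the same stars-and-bars step. No issue there.

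For part (ii), your two-case split is a reasonable organizational device, and Case 1 ($m$ below the Lemma~\ref{ell components} threshold) works: the exact asymptotics plus Freiman already imply the claimed bound with room to spare, since $\pi^2/2 < 2\log n$ eventually. But Case 2 contains a genuine gap. You propose a union bound over $j$ for the complementary event $\{\exists j:\, a_j\ge h'_j(W)\}$. That cannot deliver what you need, because for $m \gg n$ the target quantity $T_{n,m}/(C_n I_{n,m})$ is a probability that itself \emph{tends to zero} (it is of order $\exp(-\Theta((m/n)))$ up to logs, i.e.\ polynomially or super-polynomially small in $n$ once $m \gtrsim n\log n$). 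A union bound can only show $\Pr(\cap_j A_j^c) \ge 1 - \sum_j \Pr(A_j)$, which is vacuous once $\sum_j \Pr(A_j) \ge 1$; and here many indices $j$ will have $\Pr(a_j\ge h'_j(W)) = \Theta(1)$ (the constraint is genuinely violated with constant probability at typical positions), so the sum is $\gg 1$. To produce a lower bound on a vanishing probability, one needs a \emph{constructive} argument rather than a union bound. The paper supplies exactly this via Lemma~\ref{max x_i, M}: for any $\bs x$ with $\max_j x_j = M$, the explicit family $\mathcal A(M)$ of Catalan sequences that begin with $M$ openers and end with $M$ closers (and whose middle $2(n-M)$ entries form an arbitrary Catalan sequence) is a set of exactly $C_{n-M}$ Catalan sequences all compatible with that $\bs x$, since any such sequence keeps the ``slack'' $y_{n-k}+\cdots+y_n-(k+1)$ at least $M$ everywhere. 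Combined with the high-probability bound $M(\bs x) \le (1+\eps)(m/n)\log n$ from \cite{AP}, one gets $T_{n,m} \ge (1-o(1)) I_{n,m}\, C_{n-M_0} \sim (1-o(1))\, 4^{-M_0}\, I_{n,m}\, C_n$, and then Lemma~\ref{lem: asym IS} converts $I_{n,m}$ back to $\binom{n+m-1}{n-1}$ at a cost absorbed by the constant $2$ in \eqref{lowersimple}. That explicit ``pad with $M$ openers and $M$ closers'' construction, not a tail-probability estimate, is the missing idea in your Case 2.
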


Before we start our proof, we give a characterization of chord diagrams in terms of permutations of the multiset $\{1,1,2,2,\dots,n,n\}$. Consider a chord diagram $C$ with $n$ chords. If $x<y$ and $x$ and $y$ are the two endpoints of a chord in $C$, we denote this chord by $(x,y)$. Let $C=\{(p_i,q_i) : i\in [n] \}$, where $1\le p_1<\dots<p_n\le 2n-1$. Note that this representation of $C$ is unique and it gives a labeling of the chords of $C$; the $k$-th chord of $C$ is $(p_k,q_k)$ for $k \in [n]$. For all $k\in [n]$, we relabel the endpoints $p_k$ and $q_k$ with $k$, and also we color the point corresponding to $p_k$ with blue and the point corresponding to $q_k$ with red. At the end, we have $n$ blue points and $n$ red points and each set of points with the same color are labeled from 1 to $n$. In this relabeling and coloring, the blue $k$ represents the initial point and the red $k$ represents the terminal point of the $k$-th chord. Thus, a chord diagram determines a permutation of the union of a blue set $[n]$ and a red set $[n]$ such that the numbers in the blue set are in the natural order in the permutation and the red $k$ appears after the blue $k$ for each $k\in [n]$.

Conversely, any such permutation gives a unique chord diagram. To make it precise, 
let $\mathcal{S}_n^*$ be the set of permutations of the union of the two copies of $[n]$, one colored blue and the other red, such that 
\begin{enumerate}[itemsep=0pt, parsep=0pt, topsep=0pt, partopsep=0pt]
\item[(1)] the blue $i$ appears before the blue $(i+1)$ for $i \in [n-1]$ and
\item[(2)] the blue $j$ apears before the red $j$ for $j \in [n]$.
\end{enumerate}
Let $\pi$ be a permutation in $\mathcal{S}_n^*$. Let $p_i$ and $q_i$ be the positions of the blue and red $i$'s, respectively, in $\pi$, for $i \in [n]$. Then, the chord diagram corresponding to $\pi$ has the set of chords $(p_i,q_i)$ for $i\in [n]$.

Consider a permutation in $\Cal S_n^*$. Disregarding the labels but paying attention to the colors, we see a sequence of $n$ blue points and $n$ red points. Moreover, in this sequence, any prefix has at least as many blue points as red points since a particular  blue number appears before its red counterpart. Let $\C=\C(n)$ be the set of sequences of $n$ blue points and $n$ red points such that any prefix of a sequence in $\C$ has at least as many blue points as red points. We call the sequences in $\C$ \emph{Catalan sequences}. The cardinality of $\C$ is the $n$-th Catalan number $C_n$. 

A sequence $\boldsymbol s \in \C$ is determined uniquely by the numbers $y_1,\dots,y_n$, where $y_i$ is the number of red points between the $i$-th and $(1+i)$-th blue points for $i\in [n-1]$ and $y_n$ is the number of red points after the $n$-th blue point. Clearly, $y_1+\dots+y_n=n$ and $y_1+\dots+y_k\le k$ for $k<n$. We call the sequence $(y_1,\dots,y_n)$ \emph{the allocation sequence} of $\boldsymbol s$ with the interpretation that we allocate $n$ red dots to $n$ intervals determined by the consecutive blue points. By abuse of notation, we also call it the allocation sequence of $C$ if $\boldsymbol s$ is the corresponding sequence in $\C$ of $C$.

Finally, we introduce a sequence $\boldsymbol x=(x_1,\dots,x_n)$ for a chord diagram $C$, called the \emph{intersection sequence} of $C$, which is analogous to the inversion sequence of a permutation. For a chord diagram $C$, consider the labeling of the chords described above. Then, $x_j$ is defined to be the number of chords crossing the $j$-th chord whose labels are smaller than $j$, that is, 
\begin{equation} \label{defn, intersection sequence}
x_j=x_j(C)=|\{i<j: \text{ chord } i \text{ crosses chord } j \text{ in } C \}|.
\end{equation}
Note that $0\le x_j\le j-1$ and the number of crossings in $C$ is equal to $x_1+\dots+x_n$. An intersection sequence is an inversion sequence but there might be many chord diagrams with the same intersection sequence.

For a chord diagram $C$, let $\pi(C)$ denote the permutation in $\Cal S_n^*$ corresponding to $C$. For $i<j$, chord $i$ and chord $j$ cross each other in $C$ if,  in $\pi(C)$, first the blue $i$ appears, then the blue $j$, then the red $i$, and finally the red $j$. 

\begin{proof}[Proof of Lemma \ref{bound,allmn}]
$(i)$ Let $\Cal S_n^*$ and $\C$ be as defined above. Let $\Cal T=\Cal T_{n,m}$ be the set of chord diagrams of size $n$ with $m$ crossings and let $\I=\I(n,m)$ be the set of inversion sequences $\boldsymbol x=(x_1,\dots,x_n)$ such that $x_1+\cdots+x_n=m$.
By the previous discussion, any chord diagram determines a unique permutation in $\Cal S_n^*$, a unique sequence in $\C$, and a unique sequence in $\I$. Therefore, for any $C\in \mathcal T$, there is at most one $(\bs s, \bs x) \in \C\times \I$, which implies
\[
T_{n,m} \le C_n I_{n,m}.
\]
The second inequality of part $(i)$ follows from \eqref{upper I_n,m}. 

$(ii)$ Take an arbitrary permutation $\bs s$ in $\C$ and an arbitrary sequence $\boldsymbol x$ in $\I$. Let $\boldsymbol y$ be the allocation sequence of $\boldsymbol s$. Put the blue numbers $1$ to $n$ on a line in the same order from left to right and reserve $y_i$ spaces between the blue $i$ and the blue $i+1$. If 
there is a chord diagram $C$ whose allocation and intersection sequences are $\boldsymbol y$ and $ \boldsymbol x$, respectively,  we must have
\begin{equation}\label{xandy}
x_{n-k} \le y_{n-k}+y_{n-k+1}+\dots+y_{n}- (k+1), \quad 0\le k\le n-1
\end{equation}
for the following reason. In the chord diagram $C$, the chord labeled with $(n-k)$ intersects $x_{n-k}$ chords of smaller labels and thus there must be at least $x_n$ red numbers smaller than $(n-k)$ appearing after blue $(n-k)$ in the permutation representation of $C$. On the other hand, there are $y_{n-k}+\cdots+y_n$ red numbers appearing after blue $(n-k)$, of which $k+1$ of them are $n-k,\dots,n$. Thus, the number of smaller red numbers after blue $(n-k)$, which is $y_{n-k}+\cdots+y_n-(k+1)$, must be at least as large as $x_{n-k}$.

Conversely, if \eqref{xandy} is satisfied, then there is a chord diagram $C$ with the allocation and intersection sequences $\boldsymbol y$ and $ \boldsymbol x$, respectively, which can be determined by placing the red numbers to the appropriate spots starting from $n$ and proceeding backwards. First, red $n$ is placed to the $(1+x_n)$-th available space after the blue $n$. Once red $n,(n-1),\dots,(n-k+1)$ are placed, to guarantee that the $(n-k)$-th chord intersects $x_{n-k}$ chords of smaller label, red $(n-k)$ is placed to the $(1+x_{n-k})$-th unoccupied spot (from left to right) to the right of blue $(n-k)$. 

For a given $\bs x$, let $N(\bs x)$ denote the number of $\bs y$'s meeting the constraint \eqref{xandy}.  By Lemma~\ref{max x_i, M} below, $N(\bs x)$ is at least $C_{n-M}$, where $M=M(\bs x)$ denotes the maximum of $x_i$'s in $\bs x$. Consequently,
\beq \label{eq: lower T_n,m}
T_{n,m}=\sum_{\bs x \in \I} N(\bs x) \ge \sum_{\bs x \in \I} C_{n-M(\bs x)}.
\eeq

In the proof of Lemma 3.4 in \cite{AP} it was shown that, whp, the maximum $M(\bs x)$ does not exceed 
$(1+\eps)(m/n)\log n$ when the sequence $\boldsymbol x$ is chosen uniformly at random from $\I$. Using this fact and \eqref{eq: lower T_n,m}, we get
\begin{equation}\label{lowerbound}
I_{n,m}C_{n-M_0} \lesssim  T_{n,m}
\end{equation}
for $M_0=\lceil ((1+\eps)m/n)\log n \rceil$. 
Also, by  the Stirling's formula for the factorials, 
\begin{equation}\label{Cn-M0sim}
C_{n-M_0} \sim 4^{-M_0}\cdot C_n = \exp(-\log(4)M_0)C_n.
\end{equation}
Combining \eqref{lowerbound} and
\eqref{Cn-M0sim}, with small enough $\eps$,  and Lemma~\ref{lem: asym IS} we complete the proof, pending the proof of the next Lemma \ref{max x_i, M}.
\end{proof}

\begin{Remark}
A closer look shows that, in fact, $M(\bs x)$ is asymptotic to $(m/n)\log n$ in probability,
and that $\pr(M(\bs x)\le (1-\eps)(m/n)\log n)\le \exp(-c n^{\eps})$, which is much smaller than
$\exp(-\Theta(m/n))$.  Thus, the choice of $M_0$ in \eqref{lowerbound} is asymptotically
the best possible if we want the fraction $I_{n,m,M_0}/I_{n,m}$ to be at least $e^{-bm/n}$ for some constant $b>0$; here the $I_{n,m,M_0}$ denotes the number of permutations with $m$ inversions and $\max x_i \le M_0$.
\end{Remark}

\begin{Lemma}\label{max x_i, M}
 Let $\boldsymbol x=(x_1,\dots,x_n)$ be a given sequence in $\mathcal{X}_n(m)$, i.e., an inversion sequence with $m$ total inversions. Let $M=M(\boldsymbol x)$ denote the maximum of the $n$ terms in this sequence. Then, there are at least $C_{n-M}$ sequences $\boldsymbol y$ satisfying \eqref{xandy}. Therefore, $N(\boldsymbol x) \ge C_{n-M}$.
 \end{Lemma}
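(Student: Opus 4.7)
The plan is to recast \eqref{xandy} as a cumulative bound on the partial sums $Y_k:=y_1+\cdots+y_k$ of $\bs y$, and then to exhibit an explicit family of admissible $\bs y$ of size $C_{n-M}$ obtained by forcing a long initial block of $\bs y$ to vanish. Using $\sum_i y_i = n$, the condition \eqref{xandy} is equivalent to
\[
Y_{j-1}\le j-1-x_j,\qquad j=1,\dots,n.
\]
Since $x_j\le M$ for every $j$, one has $j-1-x_j\ge \max(0,j-1-M)$, so every non-decreasing integer sequence $0=Y_0\le Y_1\le\cdots\le Y_n=n$ satisfying the strictly stronger, $\bs x$-independent bound $Y_{j-1}\le \max(0,j-1-M)$ is automatically admissible. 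The associated $\bs y$ belongs to $\C$ for free since the Catalan condition $Y_k\le k$ is implied.

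Next I would enumerate the $\bs y$'s meeting this sharper bound. Such a $\bs y$ is forced to have $y_1=\cdots=y_M=0$ (from $Y_M=0$), after which the tail $(z_1,\dots,z_{n-M}):=(y_{M+1},\dots,y_n)$ must satisfy $z_i\ge 0$, $\sum_i z_i=n$, and $z_1+\cdots+z_k\le k$ for $k=1,\dots,n-M-1$. Because $z_{n-M}=n-(z_1+\cdots+z_{n-M-1})$ is then determined by the sum condition, counting such tails reduces to counting tuples $(z_1,\dots,z_{n-M-1})$ of non-negative integers with $z_1+\cdots+z_k\le k$ for every $k\le n-M-1$.

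Finally I would identify this last count with $C_{n-M}$ through the bijection $(z_1,\dots,z_{n-M-1})\mapsto (z_1,\dots,z_{n-M-1},w^\ast)$, where $w^\ast:=(n-M)-(z_1+\cdots+z_{n-M-1})$, which is non-negative by the prefix-sum bound at $k=n-M-1$. The image is precisely an allocation sequence of length $n-M$ in $\C$, and the inverse is coordinate deletion; hence the count equals $C_{n-M}$. This yields $N(\bs x)\ge C_{n-M}$, as required. The argument is wholly elementary and I do not anticipate any serious obstacle; the only conceptual point is that replacing the pointwise $x_j$'s by the uniform upper bound $M$ still leaves enough slack to house a full $C_{n-M}$-family of admissible $\bs y$, located entirely in the tail positions $M+1,\dots,n$.
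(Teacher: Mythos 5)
Your proof is correct and takes essentially the same route as the paper's: both force $y_1=\cdots=y_M=0$ and put the tail $(y_{M+1},\dots,y_n)$ in bijection with allocation sequences of length $n-M$ (the paper's description via prepending $M$ blue points and appending $M$ red points yields exactly your $y_n=w^\ast+M$). Your reformulation of \eqref{xandy} as $Y_{j-1}\le j-1-x_j$ streamlines the verification of both admissibility and the Catalan constraint, but the underlying construction is the one in the paper.
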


\begin{proof}
We say that a sequence $\boldsymbol s$ of blue and red points is a \textit{Catalan sequence} if $\boldsymbol s$ has the same number of blue and red points and any prefix of $\boldsymbol s$ has at least as many blue points as red points. Then, the set $\C$ is the set of Catalan sequences of length $2n$. 

Let $\Cal A=\Cal A(M)$ be the set of sequences of $n$ blue points and $n$ red points such that any sequence in $\Cal A$ starts with $M$ blue points and ends with $M$ red points, and the subsequence consisting of $2n-2M$ remaining points is a Catalan sequence. Clearly, a sequence in $\Cal A$ is a Catalan sequence itself, since prepending $M$ blue points and appending $M$ red points to a Catalan sequence results in a Catalan sequence. Thus, $\Cal A$ is a subset of $\C$ and the size of $\Cal A$ is $C_{n-M}$. Let $\boldsymbol s \in \Cal A$ and let $\boldsymbol y=(y_1,\dots,y_n)$ denote the allocation sequence of $\boldsymbol s$. Let $\boldsymbol y'=(y'_1,\dots, y'_{n-M})$ be the allocation sequence of the sequence obtained from $\boldsymbol s$ by removing the first and the last $M$ elements. Then, we have 
\[ y_1=\cdots=y_M=0, \quad y_{M+i}=y'_i \quad \text{for } 1\le i\le n-M-1, \quad y_n=y'_{n-M}+M. 	\]
Since $y'_{n-M}+\cdots+y'_{n-M-k}-(k+1)\ge 0$ in the sequence $\boldsymbol y'$ for all $0\le k\le n-M-1$, we have
\[	y_{n}+\cdots+y_{n-k}-(k+1)\ge M			\]
for all $0\le k\le n-M-1$. On the other hand, for $j\le M$, we have
\[
y_j+\cdots+y_n-(n-j-1) = y_{M+1}+\cdots+y_{n}-(n-j+1) =j-1 \ge x_j
\]
since $\boldsymbol x$ is an inversion sequence.
Thus, \eqref{xandy} holds for the sequences $\boldsymbol x$ and $\boldsymbol y$. Consequently, we have 
$C_{n-M} \le N(\boldsymbol x)$.
\end{proof}

\begin{Remark} Our, admittedly limited, numerical experiments seem to indicate that,
for $m=\Theta(n\log n)$, $T_{n,m}$ is at least of order $e^{-b(m/n)} C_n\binom{n+m-1}{n-1}$ for some constant $b>0$,
a bound that matches qualitatively the asymptotic formula for $T_{n,m}$ for $m< (2/\pi^2)n\log n$
in Lemma \ref{5}. However, the exponential factor in the lower bound \eqref{lowersimple}
is much smaller, namely $e^{-\Theta((m/n)^2)}$. So far we have not been able to replace this
factor by anything substantially larger. 
At the moment, it seems that $n^{3/2}$ is actually the threshold value of m
for validity of the lower bound. Here is a quick-and-dirty argument to lend some support for
this conjecture. An intersection sequence $\bs x$ and an allocation sequence $\bs y$ determine
 a chord diagram if and only if the condition  \eqref{xandy} is satisfied. Further, $\bs y$ corresponds to a Catalan path $L$ from $(0,0)$ to $(n,n)$ in the integer lattice with right and up moves not crossing the diagonal. Let $L_i:=\min \{j: (i,j)\in L\}$. The right side of \eqref{xandy} is the same as $(n-k)-L_{n-k}$, the vertical distance between the diagonal and the lowest point of the path after $n-k$ right moves. In a typical Catalan path, $\max_i (i-L_i)$ is of order $O(\sqrt{n})$. For $m/n\gg n^{1/2}$, average $x_i$ is $m/n$, which is much greater than the maximum of $i-L_i$. As a result, the probability that $x_i\le i-L_i$ for all $i\in [n]$ for a random $\bs x$ and a random $\bs y$ is extremely small. We do not know though how to handle non-typical Catalan paths;  so we cannot exclude the possibility that the conjecture is false.
\end{Remark}

\begin{Remark} By Lemma \ref{bound,allmn}, \eqref{uppersimple},  for $x,y>0$,
\begin{align*}
T(x,y)=&\,\sum_{m,n}T_{n,m} x^my^n\le \sum_{m,n} \binom{n+m-1}{n-1}C_n x^m y^n\\
=&\sum_n y^n C_n \sum_m x^m \binom{n+m-1}{n-1}.
\end{align*}
For $x<1$, the innermost series converges to $(1-x)^{-n}$, and then the double series
converges to $C(y/(1-x))$ if $y/(1-x)<1/4$. Therefore, we have an elementary proof that
the bivariate generating function series $T(x,y)$ converges if $x,y>0$ and $y/(1-x)<1/4$. 
\end{Remark}

Here is an illustration of the power of the upper bound \eqref{uppersimple} 
combined with Lemma~\ref{4}. Consider again  the uniformly random linearized chord diagram on $[2n]$ with $m$ crossings. For a cut $\Cal C$ with the partition $[2n_1]\cup [2n_1+1,\dots 2n]$,  we set $n_2=n-n_1$, define $|\Cal C|=\min\{n_1,n_2\}$, and finally define $Y_{n,m}=\max_{\Cal C} |\Cal C|$.

\begin{Lemma}\label{5} If $m=O(n)$ then $Y_{n,m}$ is bounded in probability.
\end{Lemma}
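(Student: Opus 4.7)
The plan is to apply a union bound over all possible cut positions and then control the resulting sum via the clean upper bound $T_{\nu,\mu}\le C_\nu\binom{\nu+\mu-1}{\nu-1}$ from Lemma~\ref{bound,allmn}, combined with the sharp asymptotics of Lemma~\ref{4} to replace $T_{n,m}$ in the denominator. Writing $N_k(n,m)$ for the number of diagrams of size $n$ with $m$ crossings and a cut at position $2n_1=2k$, one has
\[
N_k(n,m)=\sum_{m_1+m_2=m}T_{k,m_1}T_{n-k,m_2},
\]
so for any integer $K\ge 1$,
\[
\pr(Y_{n,m}>K)\le \frac{1}{T_{n,m}}\sum_{k=K+1}^{n-K-1}N_k(n,m).
\]
Invoking $T_{\nu,\mu}\le C_\nu\binom{\nu+\mu-1}{\nu-1}$ from \eqref{uppersimple} and the Vandermonde identity
\(
\sum_{m_1+m_2=m}\binom{k+m_1-1}{m_1}\binom{n-k+m_2-1}{m_2}=\binom{n+m-1}{m},
\)
we obtain $N_k(n,m)\le C_kC_{n-k}\binom{n+m-1}{n-1}$.

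Next, since $m=O(n)$ gives $q=m/(m+n)$ bounded away from $1$, the infinite product $\prod_{j\ge 1}(1-q^j)^3$ is bounded away from zero, and Lemma~\ref{4} yields a uniform bound
\[
\frac{\binom{n+m-1}{n-1}}{T_{n,m}}\le \frac{A}{C_n}
\]
for some constant $A>0$.  Plugging this in,
\[
\pr(Y_{n,m}>K)\le A\sum_{k=K+1}^{n-K-1}\frac{C_kC_{n-k}}{C_n}.
\]
The key calculus step is to show that the right-hand side can be made arbitrarily small by choosing $K$ large, uniformly in $n$.  Here I use Stirling's approximation $C_\ell\sim 4^\ell/(\sqrt{\pi}\,\ell^{3/2})$: for $k\le n/2$ it gives $4^{k}C_{n-k}/C_n=O(1)$, so that $C_kC_{n-k}/C_n\le B\cdot C_k/4^k$ for an absolute constant $B$.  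By symmetry ($k\leftrightarrow n-k$) one gets
\[
\sum_{k=K+1}^{n-K-1}\frac{C_kC_{n-k}}{C_n}\le 2B\sum_{k\ge K+1}\frac{C_k}{4^k},
\]
and the tail of the convergent series $\sum_{k\ge 0}C_k/4^k=C(1/4)=2$ tends to $0$ as $K\to\infty$.

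Combining these estimates gives $\pr(Y_{n,m}>K)\le 2AB\sum_{k>K}C_k/4^k$, which is independent of $n$ and tends to $0$ as $K\to\infty$, proving tightness of $\{Y_{n,m}\}$.  The main conceptual work is the generating-function/combinatorial bookkeeping (the Vandermonde step and the upper bound on $T_{\nu,\mu}$), while the only genuinely analytic obstacle is providing a bound on $C_kC_{n-k}/C_n$ that is uniform in $n$ and summable in $k$; this is handled cleanly by the Stirling estimate together with the closed form $C(1/4)=2$ obtained from \eqref{Catalan solution}.
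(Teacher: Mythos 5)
Your proposal is correct and follows essentially the same route as the paper: a union bound over cut positions, the upper bound $T_{\nu,\mu}\le C_\nu\binom{\nu+\mu-1}{\nu-1}$ combined with the stars-and-bars (Vandermonde) identity to collapse the sum over $(m_1,m_2)$, the lower bound $T_{n,m}=\Theta\bigl(C_n\binom{n+m-1}{n-1}\bigr)$ from Lemma~\ref{4}, and the Catalan-number asymptotics $C_\ell=\Theta(4^\ell\ell^{-3/2})$ to reduce the tail to a convergent series, giving $O(K^{-1/2})$. The only cosmetic difference is that you close the estimate via the tail of $\sum_kC_k/4^k=C(1/4)$, while the paper sums $n_1^{-3/2}$ directly; both yield the same bound.
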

\begin{proof} Given $n_1+n_2=n$ and $m_1+m_2=m$, where $n_1,n_2>0$, the expected number
of cuts with parts $[2n_1]$ and $[2n_1+1,2n]$, and the number of crossings in the left subdiagram and the right 
subdiagram equal $m_1$ and  $m_2$, respectively, is 
\[
Z_{\bs n,\bs m}:=\frac{T_{n_1,m_1}T_{n_2,m_2}}{T_{n,m}}.
\]
Here,
by Lemma \ref{4}, 
\[
T_{n,m}\sim\binom{n+m-1}{n-1}C_n\prod_j(1-q^j)^3,
\]
and, by \eqref{uppersimple},
\[
T_{n_i,m_i}\le \binom{n_i+m_i-1}{n_i-1}C_{n_i},\quad i=1,2.
\]
Hence, 
\[
Z_{\bs n,\bs m}\le_b\frac{\prod_i\binom{n_i+m_i-1}{n_i-1}C_{n_i}}{\binom{n+m-1}{n-1}C_n}
\le \frac{\prod_i\binom{n_i+m_i}{n_i}C_{n_i}}{\binom{n+m}{n}C_n}.
\]
Therefore, since $C_{\nu}=\Theta\bigl(\nu^{-3/2}4^{\nu}\bigr)$,
\begin{equation}\label{nu3/2}
Z_{\bs n,\bs m}\le_b\frac{n^{3/2}}{n_1^{3/2}n_2^{3/2}}\cdot\frac{\prod_i\binom{n_i+m_i-1}{n_i-1}}{\binom{n+m-1}{n-1}}.
\end{equation}
Observe that
\[
\sum_{\bs m: m_1+m_2=m}\prod_i\binom{n_i+m_i-1}{n_i-1}=\binom{n+m-1}{n-1}.
\]
Indeed, the RHS is the total number of non-negative integer solutions of
\[
\sum_{j=}^{n_1} x_j  + \sum_{j=n_1+1}^{n} x_j = m,
\]
and each such solution is a pair $(x_1,\dots,x_{n_1})$, $(x_{n_1+1},\dots,x_{n_1+n_2})$
of solutions, each of the corresponding equation
\[
\sum_{j=}^{n_1} x_j = m_1,\quad \sum_{j=n_1+1}^{n} x_j  = m_2,
\]
for the unique choice of $m_1$, $m_2$ satisfying $m_1+m_2=m$. So summing \eqref{nu3/2}
over $\bs m$, we get 
\[
\sum_{\bs m: m_1+m_2=m}Z_{\bs n,\bs m}\le_b\frac{n^{3/2}}{n_1^{3/2}n_2^{3/2}}.
\]
Consequently, as $A\to \infty$,
\begin{align*}
\pr(Y_{n,m}\ge A) &\le\sum_{\bs n:\min\{n_1,n_2\}\ge A}\,\,\sum_{\bs m:m_1+m_2=m}Z_{\bs n, \bs m}\\
&\le_b n^{3/2}\sum_{\min\{n_1,n_2\}\ge A} n_1^{-3/2}n_2^{-3/2}\\
&\le_b \sum_{A\le n_1\le n/2}n_1^{-3/2}=O(A^{-1/2})\to 0. \qedhere
\end{align*}
\end{proof}

\section{The Largest Component}\label{sec: large components}

We now turn our attention to the component sizes of chord diagrams with given number $m$ of
crossings. Throughout this section, unless otherwise stipulated, we will assume that $m$
satisfies the condition \eqref{m leq} with $\ell=1$, so that $T_{n,m}$ is given by the asymptotic formula \eqref{asym,ext,ell} with $\ell=1$.

We need a usable bound for $C_{\nu,\mu}$, the total number
of connected chord diagrams on $[2\nu]$ with $\mu$ crossings. 

It was first found by Dulucq and Peanud \cite{DP} 
(see also Stanley~\cite[Exercise 5.46]{St}) that  $C(\nu,\nu-1)=\frac{1}{2\nu-1}{3\nu-3 \choose \nu -1}$, and Acan \cite{Acan} proved that
\begin{align*}
C(\nu,\nu)&= 2+\sum_{j=1}^{6\wedge(\nu-3)}\frac{\nu}{3}\binom{6}{j}\frac{j}{\nu-3}\binom{3\nu-9}
{\nu-3-j}\\
&\quad+2\sum_{k=4}^{\nu-1}\frac{\nu}{k}\sum_{j=1}^{(\nu-k)\wedge2k}\frac{j}{\nu-k}\binom{2k}{j}
\binom{3\nu-3k}{\nu-k-j}.
\end{align*}
Thus, $C(\nu,\nu-1)=\Theta(\nu^{-1}\binom{3\nu}{\nu}$, $C(\nu,\nu)=\Theta(\binom{3\nu}{\nu})$,
and we conjecture that for $\mu-\nu=O(\nu^{\eps})$, $\eps>0$ being small,
$C(\nu,\mu)=\Theta(\nu^{\mu-\nu}\binom{3\nu}{\nu})$.

A chord diagram counted in $T_{n,m}$ contains a component including point $1$,
with $2\nu$ points ($\nu$ chords) and $\mu$ crossings, with remaining $2n-2\nu$ points forming $2\nu$
subintervals, of sizes $2n_1,2n_2,\dots,2n_{2\nu}$, ordered clockwise starting after point $1$. Observe
that there can be no chords containing two points from two different subintervals, since any
such chord would have crossed at least one of the $\nu$ chords. Therefore, we will have $2\nu$ isolated subdiagrams, with crossing numbers
$m_1,\dots,m_{2\nu}$, adding up to $m-\mu$. Thus,
\[
T_{n,m}=\sum_{\nu\ge 1,\mu\ge \nu-1}C_{\nu,\mu}\sum_{n_1+\cdots+n_{2\nu}=n-\nu
\atop m_1+\cdots+m_{2\nu}=m-\mu}\prod_{j=1}^{2\nu}T_{n_j,m_j}.
\]
Setting $C_{0,0}=1$, we get
\begin{align}\label{T=C}
\sum_{n,m} T_{n,m}x^my^n  &=  1+\sum_{\nu,\mu}C_{\nu,\mu}x^{\mu}y^{\nu}\sum_{n_1+\dots+n_{2\nu}>0\atop m_1,\dots,m_{2\nu}\ge 0}\prod_{j=1}^{2\nu}T_{n_j,m_j} x^{m_j}y^{n_j}  \notag 	\\
&=\sum_{\nu,\mu}C_{\nu,\mu}x^{\mu}y^{\nu} \left(\sum_{n_1\ge 0,m_1\ge 0}T_{n_1,m_1}x^{m_1}y^{n_1}\right)^{2\nu}.
\end{align}
Equivalently,
\begin{equation}\label{"expon"}
T(x,y)=C(x,yT^2(x,y)),
\end{equation}
where $C(x,y):=\sum_{\mu,\nu}C_{\nu,\mu}x^{\mu}y^{\nu}$ denotes the bivariate generating 
function for the sequence $\{C_{\nu,\mu}\}$.
Equation \eqref{"expon"} implies  a Chernoff-type bound for $C_{\nu,\mu}$:
\begin{equation}\label{Cnumu<,Cher}
C_{\nu,\mu}\le \frac{T(x,y)}{x^{\mu}y^{\nu} [T(x,y)]^{2\nu}},\quad\forall\, x<1,\,\,y<\frac{1-x}{4}.
\end{equation}
Since $T(x,y)$ increases with $y$, the best estimate, for a given $x<1$, is obtained
by letting $y\uparrow (1-x)/4$.  From \eqref{Tou}, and $C(1/4)=2$, it follows that 
\[
\lim_{y\uparrow (1-x)/4}T(x,y)=2f(x),\quad f(x)=\sum_{j\ge 0}(-1)^j x^{\binom{j+1}{2}}.
\]
Therefore, the bound \eqref{Cnumu<,Cher} becomes
\begin{equation}\label{x<1}
C_{\nu,\mu}\le \frac{2f(x)}{x^{\mu}(1-x)^{\nu}f(x)^{2\nu}}\le  \frac{2}{x^{\mu}(1-x)^{\nu}f(x)^{2\nu}} ,\quad\forall\, x<1.
\end{equation}
Using $f(x)>1-x$ in \eqref{x<1}, we obtain
\[
C_{\nu,\mu}\le \frac{2}{x^{\mu}(1-x)^{3\nu}},\quad\forall\,x<1.
\]
The RHS is minimized at $x=\mu/(3\nu+\mu)$, and we get
\begin{equation}\label{nu+3mu}
C_{\nu,\mu}\le 2\, \frac{(3\nu+\mu)^{3\nu+\mu}}{(3\nu)^{3\nu}\mu^{\mu}}.
\end{equation}
In particular,
\begin{equation}\label{trees<}
C_{\nu,\nu-1}\le_b \frac{(4\nu-1)^{4\nu-1}}{(3\nu)^{3\nu}(\nu-1)^{\nu-1}}\le_b \nu^{1/2}\binom{4\nu}{\nu},
\end{equation}
similar to, but noticeably worse than the exact formula for the number of trees, which is $\frac{1}{2\nu-1}{3\nu-3 \choose \nu-1}$. For $\mu/\nu$
large, we get a bound better than \eqref{nu+3mu} by using the obvious $C_{\nu,\mu}<T_{\nu,\mu}$
and Equation~\eqref{uppersimple}:
\begin{equation}\label{Cnumu,mu/nularge}
C_{\nu,\mu}\le C_{\nu}\binom{\mu+\nu-1}{\nu-1}\le_b \frac{4^{\nu}}{\nu^2}\cdot\frac{(\mu+\nu)^{\mu+\nu}}
{\mu^{\mu}\nu^{\nu}} \le 4^{\nu}\frac{(\mu+\nu)^{\mu+\nu}}
{\mu^{\mu}\nu^{\nu}}.
\end{equation}
Combining \eqref{nu+3mu} and \eqref{Cnumu,mu/nularge}, we obtain
\beq \label{upper Cnumu}
C_{\nu,\mu} \le_b \, \min \bigg\{  \frac{4^{\nu}}{\nu^2}\cdot\frac{(\mu+\nu)^{\mu+\nu}}
{\mu^{\mu}\nu^{\nu}},    \frac{(3\nu+\mu)^{3\nu+\mu}}{(3\nu)^{3\nu}\mu^{\mu}}	\bigg\}.
\eeq

\begin{Lemma}[Crossing-density gap]\label{densitygap} 
Let $\alpha$ be a constant greater than $4e^2$ and let $\beta=\frac{5}{\log\alpha-\log(4e^2)}$. For crossing density $m/n$ exceeding $\alpha$, whp, there is no component of size above $\beta\log n$  whose edge density is below $ m/(\alpha n)$.
\end{Lemma}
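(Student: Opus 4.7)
The plan is a first-moment argument. Let $N_{\nu,\mu}$ denote the number of components with $\nu$ chords and $\mu$ crossings in a uniformly random diagram from $\mathcal T_{n,m}$, and write $D:=m/n$, $q:=D/(D+1)$, $\gamma:=\mu/\nu$, $\lambda:=\log(\alpha/(4e^2))>0$. The goal is to show
\[
\sum_{\nu>\beta\log n}\ \sum_{0\le\mu\le\nu D/\alpha}\mathbb E[N_{\nu,\mu}]=o(1),
\]
after which Markov's inequality concludes. Combining \eqref{T=C} with rotational symmetry of the $2n$ endpoints gives $\mathbb E[N_{\nu,\mu}]=(n/\nu)\,C_{\nu,\mu}B_{\nu,\mu}/T_{n,m}$ with $B_{\nu,\mu}=[x^{m-\mu}y^{n-\nu}]T(x,y)^{2\nu}$. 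Expanding $B_{\nu,\mu}$ as a convolution and applying Lemma~\ref{bound,allmn}$(i)$ term-by-term, Vandermonde collapses the $\bs m$-sum to $\binom{m-\mu+n-\nu-1}{n-\nu-1}$ and Lagrange inversion reduces the $\bs n$-sum to $(\nu/n)\binom{2n}{n-\nu}\le_b nC_n$, so $B_{\nu,\mu}\le_b \nu C_n\binom{m-\mu+n-\nu-1}{n-\nu-1}$.

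Lemma~\ref{ell components} with $\ell=1$ supplies $T_{n,m}\sim C_n\binom{n+m-1}{n-1}\prod_j(1-q^j)^3$, so the Catalan factors cancel. The ratio of binomials can be written as the explicit product $\prod_{j=0}^{\nu-1}(n-1-j)\prod_{k=0}^{\mu-1}(m-k)\big/\prod_{i=0}^{\nu+\mu-1}(n+m-1-i)$; bounding numerator by $n^\nu m^\mu$ and denominator by $(n+m-\nu-\mu)^{\nu+\mu}$ (noting $\nu+\mu\le(n+m)\cdot2/(\alpha+1)\le(n+m)/2$ in our range) yields the ratio at most a bounded polynomial factor times $(1-q)^\nu q^\mu$. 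Plugging in the bound $C_{\nu,\mu}\le_b (4^\nu/\nu^2)(\mu+\nu)^{\mu+\nu}/(\mu^\mu\nu^\nu)$ from \eqref{upper Cnumu}, the estimates combine to
\[
\mathbb E[N_{\nu,\mu}]\le_b \frac{n^{1+o(1)}}{\prod_j(1-q^j)^3}\,e^{\nu\phi(\gamma,D)},\qquad
\phi(\gamma,D):=\log 4+(1+\gamma)\log\tfrac{1+\gamma}{D+1}+\gamma\log\tfrac{D}{\gamma}.
\]

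The crux is the inequality $\phi(\gamma,D)\le-\lambda$ for $\gamma\in[0,D/\alpha]$ and $D\ge\alpha$. Since $\partial_\gamma\phi=\log\bigl((1+\gamma)D/(\gamma(D+1))\bigr)$ vanishes only at $\gamma^\ast=D$ and $\phi(\cdot,D)$ is concave in $\gamma$ with $D/\alpha<D$, its maximum over $[0,D/\alpha]$ sits at $\gamma=D/\alpha$, and algebraic simplification produces $\phi(D/\alpha,D)=\log(4/\alpha)+(1+D/\alpha)\log((\alpha+D)/(D+1))$. The function $D\mapsto(1+D/\alpha)\log((\alpha+D)/(D+1))$ has strictly negative derivative on $[\alpha,\infty)$ (a consequence of $\log(1+x)<x$), so it is maximized at $D=\alpha$, where it equals $2\log(2\alpha/(\alpha+1))<2\log 2<2$. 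Hence $\phi(D/\alpha,D)<\log(4/\alpha)+2=\log(4e^2/\alpha)=-\lambda$. Finally, Freiman's formula \eqref{Freiman} bounds $1/\prod_j(1-q^j)^3$ by $n\cdot(\log n)^{O(1)}$ under \eqref{m leq}, giving $\mathbb E[N_{\nu,\mu}]\le_b n^{2+o(1)}e^{-\lambda\nu}$. For $\nu>\beta\log n=(5/\lambda)\log n$ this is $\le n^{-3+o(1)}$, and summation over the $O(n^2\log n)$ admissible pairs yields $o(1)$. The main technical obstacle is establishing the pointwise inequality $\phi\le-\lambda$; the threshold $4e^2$ emerges from combining the $4^\nu$ prefactor of the $C_{\nu,\mu}$ bound with one factor of $e$ from $(1+1/\gamma)^\gamma\le e$ and a further factor of $e$ reflecting the boundary behavior of the auxiliary function at $D=\alpha$.
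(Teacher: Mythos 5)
Your overall strategy --- a first-moment count of $(\nu,\mu)$-components using \eqref{T=C}, the bound $C_{\nu,\mu}\le_b (4^{\nu}/\nu^2)(\mu+\nu)^{\mu+\nu}/(\mu^\mu\nu^\nu)$, the asymptotics of Lemma~\ref{ell components}, and the reduction to the pointwise inequality $\phi(\gamma,D)\le-\lambda$ on the range $\gamma\le D/\alpha$ --- is exactly the paper's. You do diverge in two intermediate steps, both genuine alternatives worth noting. For $B_{\nu,\mu}=[x^{m-\mu}y^{n-\nu}]T(x,y)^{2\nu}$ the paper uses a Chernoff-type bound on the generating function via Touchard's formula, letting $y\uparrow(1-x)/4$; you instead expand the $2\nu$-fold convolution, apply \eqref{uppersimple} termwise, collapse the $\bs m$-sum by the Vandermonde convolution and the $\bs n$-sum by Lagrange inversion for $C(y)^{2\nu}$. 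Your bound $B_{\nu,\mu}\le_b \nu C_n\binom{m-\mu+n-\nu-1}{n-\nu-1}$ is correct and in fact a bit sharper than the paper's $4^n$ version. Likewise, your analysis of $\phi$ (exact critical-point location $\gamma^*=D$, concavity in $\gamma$, monotone decrease of $D\mapsto(1+D/\alpha)\log\tfrac{\alpha+D}{D+1}$ via $\log(1+x)<x$) is cleaner than the paper's Taylor-expansion estimate, and it pinpoints the constant $4e^2$ exactly at the boundary $D=\alpha$.

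There is, however, a genuine gap in the step where you pass from the binomial ratio to $(1-q)^\nu q^\mu$. You bound
\[
\frac{\prod_{j=0}^{\nu-1}(n-1-j)\,\prod_{k=0}^{\mu-1}(m-k)}{\prod_{i=0}^{\nu+\mu-1}(n+m-1-i)}
\le\frac{n^\nu m^\mu}{(n+m-\nu-\mu)^{\nu+\mu}}
\]
and assert this is $(1-q)^\nu q^\mu$ times a polynomial. But the discrepancy factor is
\[
\left(\frac{n+m}{n+m-\nu-\mu}\right)^{\nu+\mu}
=\exp\!\left(-(\nu+\mu)\log\bigl(1-\tfrac{\nu+\mu}{n+m}\bigr)\right)
\ge\exp\!\left(\frac{(\nu+\mu)^2}{n+m}\right),
\]
and in your admissible range one can have $\nu$ of order $n$ and $\mu$ of order $m/\alpha$, so that $(\nu+\mu)/(n+m)$ is bounded away from $0$ and this factor is $e^{\Theta(n+m)}$, vastly super-polynomial. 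Your observation $\nu+\mu\le\tfrac{2}{\alpha+1}(n+m)\le(n+m)/2$ keeps the factor below $2^{\nu+\mu}$, which is still catastrophic. The point is that bounding numerator up and denominator down separately throws away a crucial cancellation: the numerator has its own deficit $\exp(-\nu^2/(2n)-\mu^2/(2m))$, which dominates the denominator's deficit $\exp(-(\nu+\mu)^2/(2(n+m)))$ by Cauchy--Schwarz. The paper handles this by observing that $\log\tfrac{b^b}{a^a(b-a)^{b-a}}$ is a concave function of $(a,b)$, so a tangent-plane estimate at $(n,n+m)$ gives directly $\binom{m-\mu+n-\nu-1}{n-\nu-1}\big/\binom{n+m-1}{n-1}\le_b (1-q)^\nu q^\mu$ with no exponential slack. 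Replacing your termwise bound with this concavity argument repairs the step and the rest of your proof then goes through.
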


\begin{proof}  Let us first  bound the expected number of components with
parameters $(\nu,\mu)$ in the random circular diagram on $[2n]$ with $m$ crossings.
Any such component that contains vertex $1$  induces a partition  of $[2n]$ minus a subset of $2\nu$ points
into $2\nu$ clockwise ordered subintervals
with $2n_1,\cdots, 2n_{2\nu}$ points respectively (possibly with some nonpositive $n_i$'s),
corresponding to $2\nu$ {\it isolated\/} subdiagrams
with $m_1,\cdots, m_{2\nu}$ crossings respectively, with $m_1 + ... +m_{2\nu} = m - \mu$.
So the probability $P_{\nu,\mu}$ that vertex $1$ belongs to a component with parameters $(\nu,\mu)$ 
is given by 
\begin{equation*}
P_{\nu,\mu}=\frac{C_{\nu,\mu}}{T_{n,m}}\sum\limits_{n_1+\cdots+n_{2\nu}=n-\nu\atop
m_1+\cdots+m_{2\nu}=m-\mu}\prod_{j=1}^{2\nu}T_{n_j,m_j}=\frac{C_{\nu,\mu}}{T_{n,m}}\,
[x^{m-\mu}y^{n-\nu}]\, T(x,y)^{2\nu}.
\end{equation*}
 Let $X_{\nu,\mu}$ be the total number of
components with parameters $\nu$ and $\mu$. Then, by cyclic symmetry, 
\begin{equation}\label{expcomp}
\ex[X_{\nu,\mu}]=P_{\nu,\mu}\,\frac{n}{\nu}
=\frac{n\,C_{\nu,\mu}}{\nu\, T_{n,m}}\,[x^{m-\mu}y^{n-\nu}]\, T(x,y)^{2\nu}.
\end{equation}
Let us see what we can get from \eqref{expcomp}. By \eqref{Tou},
\begin{equation*}
[x^{m-\mu}y^{n-\nu}]\, T(x,y)^{2\nu}
\le \frac{\bigl[C(y/(1-x)) A(x,1-C(y/(1-x)))\bigr]^{2\nu}}{x^{m-\mu}y^{n-\nu}},
\end{equation*}
for all $x<1$, $y<(1-x)/4$. Letting $y\uparrow (1-x)/4$, using $C(1/4)=2$ and $A(x,-1)=f(x)\le 1$, and setting 
$x=(m-\mu)/(m-\mu+n-\nu)$, we obtain
\begin{align*}
[x^{m-\mu}y^{n-\nu}]\, T(x,y)^{2\nu}\le&\, 4^n\,\frac{1}{x^{m-\mu}(1-x)^{n-\nu}}\\
=&\,4^n\, \frac{(m-\mu+n-\nu)^{m-\mu+n-\nu}}{(m-\mu)^{m-\mu}(n-\nu)^{n-\nu}}.
\end{align*}
Consequently, the identity \eqref{expcomp} yields 
\begin{equation}\label{EXnumu<}
\ex[X_{\nu,\mu}]\le nC_{\nu,\mu}\cdot \frac{4^n}{T_{n,m}}\,\frac{(m-\mu+n-\nu)^{m-\mu+n-\nu}}{(m-\mu)^{m-\mu}(n-\nu)^{n-\nu}}.
\end{equation}
Now, by Lemma \ref{ell components}, 
\begin{equation*}
T_{n,m}\ge_b \binom{n+m-1}{n-1}C_n \exp\left(-\frac{\pi^2}{2(1-q)}\right),\quad
q:=\frac{m}{m+n},
\end{equation*}
provided that 
\begin{equation}\label{m<}
m\le \frac{2n}{\pi^2}\bigl(\log n -2\log\log n -\omega(n)\bigr), 
\end{equation}
where $\omega(n)\to\infty$  however slowly. For this $m$, we have
\[
\exp\left( \tfrac{\pi^2}{2(1-q)}\right)\le \frac{n}{\log n} \quad \text{and} \quad {n+m-1 \choose n-1} \ge \frac{{n+m \choose n}}{ \log n}.
\]
Using the two inequalities above and Stirling's formula for the Catalan number $C_n$, Equation~\eqref{EXnumu<} becomes
\begin{equation*}
\ex[X_{\nu,\mu}]\le_b  \frac{n^{7/2}\cdot C_{\nu,\mu}}{{n+m \choose n}}\cdot \frac{(m-\mu+n-\nu)^{m-\mu+n-\nu}}{(m-\mu)^{m-\mu}(n-\nu)^{n-\nu}}.
\end{equation*}
Now, using 
\[
\text{const }b^{-1/2}\frac{b^b}{a^a(b-a)^{b-a}}\le \binom{b}{a}\le \frac{b^b}{a^a(b-a)^{b-a}},
\]
and log-concavity of $f(a,b):=\tfrac{b^b}{a^a(b-a)^{b-a}}$, we replace the last bound with
a cruder version. Namely, if $m$ satisfies \eqref{m<}, then
\begin{equation}\label{mean Xnumu}
\ex[X_{\nu,\mu}]\le_b n^4 C_{\nu,\mu}\,\frac{n^{\nu} m^{\mu}}{(n+m)^{\nu+\mu}}
\end{equation}
uniformly for all $\nu\le n$ and $\nu-1\le\mu\le m$, or using \eqref{Cnumu,mu/nularge},
\begin{equation}\label{EXnumu,cruder}
\begin{aligned}
\ex[X_{\nu,\mu}]\le_b&\, n^4\,\frac{4^{\nu}}{\nu^2}\,\frac{(\mu+\nu)^{\mu+\nu}}
{\mu^{\mu}\nu^{\nu}}\,\frac{n^{\nu} m^{\mu}}{(n+m)^{\nu+\mu}}\\
=&\,n^4\, (4^{\nu}/\nu^2) F(\mu/\nu, m/n)^{\nu},\\
F(x,y):=&\,\frac{(1+x)^{1+x}}{x^x}\cdot\frac{y^x}{(1+y)^{1+x}}.
\end{aligned}
\end{equation}
Let  $y=m/n\ge \alpha$ with $\alpha>4e^2$, and $x=\mu/\nu \le y/\alpha$. Taylor-expanding $z\log z$ about
$z=x$ and using $x\ge (\nu-1)/\nu$,
\begin{align*}
\log F(x,y)=&\,(1+x)\log(1+x)-x\log x +x\log y - (1+x)\log(1+y)\\
\le&\, (1+\log x) +\frac{1}{2x} -\log y\le 2+\log x-\log y=\log\frac{xe^2}{y}\\
\le&\, \log\frac{ye^2}{\alpha y}=\log\frac{e^2}{\alpha}<\log\frac{1}{4}.
\end{align*}
So \eqref{EXnumu,cruder} becomes:
\begin{equation*}
\ex[X_{\nu,\mu}]\le_b n^4 \rho^{\nu}/\nu^2,\quad \rho:=\frac{4 e^2}{\alpha},
\end{equation*}
uniformly for all $\mu\ge \nu-1$ with $\mu/\nu\le \alpha^{-1} m/n$. For $\beta=-5/\log \rho$,
\begin{align*}
\sum_{\nu\ge \beta\log n,\, \mu/\nu\le \alpha^{-1}m/n}\ex[X_{\nu,\mu}]\le_b&\,
n^4(m/n) \sum_{\nu\ge \beta \log n}  \rho^{\nu}/\nu \\
\le_b&\, n^4 \log n\cdot \rho^{\beta \log n}/\log n=1/n \to 0. \qedhere
\end{align*}
\end{proof}

Lemma \ref{densitygap} shows that, for the random diagram with density $m/n$ sufficiently large, whp  there are no components of size $\Omega(\log n)$ with density smaller by a constant
factor than
$m/n$. We anticipate that, for $m/n\to\infty$, whp there exists a large component and that
a likely candidate is a component with the maximum density. Let us focus on such 
components. Given parameters $\nu$ and $\mu$, let $A_{\nu,\mu}$ denote the
event  ``there is a maximum density component with $\nu$ chords and $\mu$ crossings''.
Needless to say, on the event $A_{\nu,\mu}$, the maximum density is $\mu/\nu$.

\begin{Lemma}\label{maxden} Suppose $m/n\to\infty$ and $m$ satisfies \eqref{m leq} in
Lemma \ref{ell components}.  Let $c\in (1,2)$ be fixed. Define $\alpha =7\max\{\log(1/ce^{-c}),\,\log(1/0.99)\}$.
Then, 
\begin{equation}\label{sumPAnumu}
\lim_{n,m\to\infty}\sum_{\nu,\mu}\pr(A_{\nu,\mu})=0,
\end{equation}
where the sum is over all pairs $(\nu,\mu)$ such that 
\[
\alpha \log n\le\nu,\quad \mu\le (2-c) m.
\]
In words, it is very unlikely that the densest component has size exceeding $\alpha \log n$
and that its number of crossings scaled by $m$ is strictly below $1$.
\end{Lemma}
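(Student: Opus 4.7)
The plan is to apply a first-moment bound to $X^{*}_{\nu,\mu}$, the number of components with parameters $(\nu,\mu)$ that realize the maximum density $\rho=\mu/\nu$ in the random diagram. Since $A_{\nu,\mu}\subseteq\{X^{*}_{\nu,\mu}\ge 1\}$, Markov's inequality reduces the claim to showing $\sum_{\nu,\mu}\ex[X^{*}_{\nu,\mu}]=o(1)$. My starting point is an analogue of the identity \eqref{expcomp} in which $T(x,y)$ is replaced by the restricted generating function $T^{\le\rho}(x,y)$ enumerating those chord diagrams whose \emph{every} component has density at most $\rho$:
\begin{equation*}
\ex[X^{*}_{\nu,\mu}] = \frac{n\,C_{\nu,\mu}}{\nu\,T_{n,m}}\,[x^{m-\mu}y^{n-\nu}]\bigl(T^{\le\rho}(x,y)\bigr)^{2\nu}.
\end{equation*}
The max-density constraint on the rest automatically enforces $m-\mu\le\rho(n-\nu)$, i.e.\ $\mu\ge m\nu/n$; combined with the hypothesis $\mu\le(2-c)m$, the admissible range becomes $\alpha\log n\le\nu\le(2-c)n$ and $y\nu\le\mu\le(2-c)m$, with $y=m/n$.

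Next I would Chernoff-bound the coefficient extraction at a tilt $(x_{*},y_{*})$ chosen to exploit the built-in constraint $m'\le\rho n'$ in every term of $T^{\le\rho}$. The advantage is that $T^{\le\rho}$ can be controlled at tilts $x_{*}>m/(m+n)$ where the full series $T(x_{*},y_{*})$ would be too large. A clean route is through the functional equation $T^{\le\rho}(x,y) = C^{\le\rho}(x,\,yT^{\le\rho}(x,y)^2)$ applied to the connected version $C^{\le\rho}(x,y)=\sum_{\nu',\mu'\le\rho\nu'}C_{\nu',\mu'}x^{\mu'}y^{\nu'}$, for which the term-by-term bound \eqref{Cnumu<,Cher} remains valid. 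Coupling this with \eqref{upper Cnumu} for $C_{\nu,\mu}$ and the asymptotic lower bound on $T_{n,m}$ from Lemma \ref{ell components} should yield an estimate of the form $\ex[X^{*}_{\nu,\mu}]\le_b n^{O(1)}\,\theta(x,y)^{\nu}$ for some function $\theta<1$ on the admissible range, with $\theta$ dominated by $\max\{ce^{-c},\,0.99\}^{1/7}$; matching the factor of $7$ in $\alpha$. Summing the geometric series in $\nu\ge\alpha\log n$ then produces a bound of order $n^{O(1)}\theta^{\alpha\log n}=n^{-\Omega(1)}=o(1)$, uniformly over the specified range of $\mu$.

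The main obstacle is obtaining a usable explicit bound on $T^{\le\rho}(x_{*},y_{*})$ that genuinely improves on $T(x_{*},y_{*})$, for otherwise the direct first-moment bound $\ex[X_{\nu,\mu}]\le_b n^{O(1)}\cdot 4^{\nu}F(\mu/\nu,m/n)^{\nu}$ (with $F$ as in \eqref{EXnumu,cruder}) is insufficient: at the boundary $\mu/\nu = y$ one has $F(y,y)=1$, so the leading factor $4^{\nu}$ blows up and summation over $\nu\ge\alpha\log n$ must fail. The max-density condition is precisely what should kill this exponential factor, but extracting the correct Poisson-type rate $(ce^{-c})^{\nu/7}$ requires careful Chernoff saddle-point analysis at the boundary case $\mu/\nu\approx m/n$, where the restriction from $T$ to $T^{\le\rho}$ makes a genuine difference. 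Secondary care is needed for the opposite regime where $\mu/\nu$ is much larger than $m/n$: there one switches to the second alternative in \eqref{upper Cnumu}, and the bound from \eqref{EXnumu,cruder}, suitably refined, gives the required decay without even invoking the max-density constraint.
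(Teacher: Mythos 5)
Your overall strategy matches the paper's: bound $\pr(A_{\nu,\mu})$ by the first moment of max-density components, replace $T(x,y)^{2\nu}$ in the coefficient-extraction identity by a restricted generating function that encodes the max-density constraint, then Chernoff-bound and sum over $\nu\ge\alpha\log n$. You also correctly diagnose the crux: at the boundary $\mu/\nu\approx m/n$, the direct bound $4^{\nu}F(\mu/\nu,m/n)^{\nu}$ from \eqref{EXnumu,cruder} degenerates since $F(y,y)=1$, and the whole argument lives or dies on whether the restriction actually buys you a factor strictly smaller than $1$.

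However, there is a genuine gap: you identify the needed bound on $T^{\le\rho}(x_*,y_*)$ as ``the main obstacle,'' describe what it should achieve, and stop. The proposal never produces it. The route you sketch --- a restricted functional equation $T^{\le\rho}(x,y)=C^{\le\rho}(x,yT^{\le\rho}(x,y)^2)$ --- is implicit in $T^{\le\rho}$ and would require independent control of $C^{\le\rho}$ to extract a usable bound, which circles back to where you started. The paper takes a completely different and concrete route here: it works with the (weaker but sufficient) global-density restriction $T_{\mu/\nu}(x,y):=1+\sum_{0<i/j\le\mu/\nu}T_{i,j}x^iy^j$, applies the termwise bound $T_{i,j}\le C_j\binom{i+j-1}{j-1}$, writes $T_{\mu/\nu}\le\Sigma_1-\Sigma_2$ where $\Sigma_1=C(1/4)=2$ recovers the unrestricted value, and then bounds $\Sigma_2$ \emph{from below} via Abelian summation by parts, arriving at $T_{\mu/\nu}(x,y)\le\tfrac{2\sqrt{1-x^{\mu/\nu}}}{1+\sqrt{1-x^{\mu/\nu}}}(1+O(\sqrt{1-x}))$ at the saddle $y=(1-x)/4$. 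The resulting factor $\bigl(\phi(x^{\mu/\nu})\bigr)^2$ with $\phi(z)=\sqrt{1-z}/(1+\sqrt{1-z})\le 1/2$ is exactly what kills the $4^{\nu}$; the two-case split $X\ge c$ vs.\ $X\le c$ (with $X=\tfrac{\mu/\nu}{m/n}$) then finishes. None of this lower bound on the tail $\Sigma_2$ is hinted at in the proposal, and it is the actual content of the lemma, not a routine saddle-point estimate.

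One more small correction to your intuition: the paper does \emph{not} tilt $x_*$ above $m/(m+n)$. It uses the natural remainder saddle $x_*=(m-\mu)/(m-\mu+n-\nu)$, which, because $\mu\ge m\nu/n$, satisfies $x_*\le m/(m+n)$. The gain comes entirely from $T_{\mu/\nu}(x_*,y_*)<T(x_*,y_*)=2$ at that point, not from moving the tilt.
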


\begin{proof} Notice upfront that $\pr(A_{\nu,\mu})=0$ if $\mu/\nu<m/n$. Thus, in \eqref{sumPAnumu},
the terms of interest are those with $\mu/\nu\ge m/n$.
 As in the proof of Lemma \ref{densitygap},  a component with parameters $\nu$ and $\mu$ induces the partition of the remaining set of $2(n-\nu)$ points into $2\nu$ isolated
subdiagrams with parameters $n_j,m_j$, $1\le j\le 2\nu$.  If a chosen component is
of maximum density $\mu/\nu$, then, in addition, we must have $m_j/n_j\le \mu/\nu$. So,
instead of \eqref{expcomp}, we obtain
\begin{equation*}
\pr(A_{\nu,\mu})\le 
\frac{n\,C_{\nu,\mu}}{\nu\, T_{n,m}}\,[x^{m-\mu}y^{n-\nu}]\, T_{\mu/\nu}(x,y)^{2\nu},
\end{equation*}
where
\[
T_{\mu/\nu}(x,y):=1+\sum_{0<i/j\le\mu/\nu} T_{i,j}x^iy^j.
\]
Here
\[
[x^{m-\mu}y^{n-\nu}]\, T_{\mu/\nu}(x,y)^{2\nu}\le \frac{T_{\mu/\nu}(x,y)^{2\nu}}{x^{m-\mu}y^{n-\nu}},
\quad\forall\, x>0,\,y>0.
\]
Let 
\[
x:=\frac{m-\mu}{m-\mu+n-\nu},\quad y:=\frac{1}{4} (1-x)
\]
and observe that $x\to 1$ from below since $m-\mu\ge m(c-1)\gg
n$. Similar to \eqref{mean Xnumu}, we obtain
\begin{align}
\pr(A_{\nu,\mu})\le&\,
\frac{n\,C_{\nu,\mu}4^{n-\nu}}{\nu\, T_{n,m}}\,\frac{(m-\mu+n-\nu)^{m-\mu+n-\nu}}
{(m-\mu)^{m-\mu}(n-\nu)^{n-\nu}}\,T_{\mu/\nu}(x,y)^{2\nu}\notag\\
\le_b&\,n^4 4^{-\nu}C_{\nu,\mu}\,\frac{m^{\mu}n^{\nu}}{(m+n)^{\mu+\nu}}\,T_{\mu/\nu}(x,y)^{2\nu}
\notag\\
\le&\,\frac{n^4}{\nu^2}\cdot\frac{(\mu+\nu)^{\mu+\nu}}{\mu^{\mu}\nu^{\nu}}\cdot\frac{m^{\mu}n^{\nu}}{(m+n)^{\mu+\nu}}\,T_{\mu/\nu}(x,y)^{2\nu}, \label{PAnumu<}
\end{align}
where we use \eqref{Cnumu,mu/nularge} in the last step. Let us bound the last factor in \eqref{PAnumu<}. Using the upper bound \eqref{uppersimple} in
Lemma \ref{bound,allmn}, we have
\begin{align}\label{T< Sigma1-Sigma2}
T_{\mu/\nu}(x,y)  & \le 1+ \sum_{0< i/j\le \mu/\nu}\binom{i+j-1}{j-1}C_jx^iy^j	\notag	\\
&=\sum_{i,j\ge 0}\binom{i+j-1}{j-1}C_jx^iy^j -\sum_{j>0\atop i/j>\mu/\nu}\binom{i+j-1}{j-1}C_jx^iy^j \notag \\
&=:\Sigma_1 - \Sigma_2.
\end{align}
Here
\begin{align}
\Sigma_1=&\,\sum_{j\ge 0}C_j y^j\sum_{i\ge 0}\binom{i+j-1}{j-1}x^i\notag\\
=&\,\sum_{j\ge 0}C_j y^j (1-x)^{-j}=\sum_{j\ge 0}C_j (1/4)^j =C(1/4)=2.\label{Sigma1=}
\end{align}
Turn to $\Sigma_2$. For a given $j>0$, introduce $i_0=i_0(j):=\min\{i: i>j\mu/\nu\}$, and write
\[
\sum_{i>j\mu/\nu} \binom{i+j-1}{j-1} x^i=x^{i_0}\sum_{i\ge i_0}\binom{i+j-1}{j-1}x^{i-i_0}:=
x^{i_0}\Sigma_2^*.
\]
We are going to use Abelian summation by parts to bound $\Sigma_2^*$ from below. Using
\[
\sum_{b=a}^{a+N-1}\binom{b}{a}=\binom{a+N}{a+1},
\]
we have: for $N>0$,
\begin{align*}
S_{N,j}:=\sum_{i=i_0}^{i_0+N-1}\binom{i+j-1}{j-1}=&\,\sum_{i=0}^{i_0+N-1}\binom{i+j-1}{j-1}
-\sum_{i=0}^{i_0-1}\binom{i+j-1}{j-1}\\
=&\,\binom{i_0+j+N-1}{j}-\binom{i_0+j-1}{j},
\end{align*}
and $S_{0,j}=0$. Using 
\[
\binom{i+j-1}{j-1}=S_{i-i_0+1,j}-S_{i-i_0,j},
\]
we compute
\begin{align}
\Sigma_2^*=&\,\sum_{i\ge i_0}\bigl[S_{i-i_0+1,j}-S_{i-i_0,j}\bigr]x^{i-i_0}
=(1-x)\sum_{i\ge i_0}S_{i-i_0+1,j}x^{i-i_0}\notag\\
=&\,(1-x)\sum_{i\ge i_0}\left[\binom{i+j}{j}-\binom{i_0+j-1}{j}
\right] x^{i-i_0}\notag\\
\ge&\, (1-x)\sum_{i\ge i_0}\binom{i-i_0+j}{j}x^{i-i_0}\notag\\
=&\,(1-x)\cdot(1-x)^{-j-1}=(1-x)^{-j}.\label{Sigma2*>}
\end{align}
Explanation for  the inequality: First, 
\[
{i_0+j-1 \choose j} \le {i_0+j \choose j}-1; \quad j\ge 1,
\]
and then
\[
{i_0+j \choose j} + {i-i_0+j \choose j}-1 \le {i+j \choose j}
\]
since the number of ways to select $j$ balls from $i$ red balls and
$j$ white balls is at least the number of ways to choose $j$ balls from a subset of $i-i_0$ white balls and $j$ white balls plus the number of selections in which red balls, if any, have to
be selected from the complementary subset of $i_0$ balls. We subtract 1 from the left side since choosing $j$ white balls is counted twice. As $x\to 1$, we have $x^{i_0}=x^{j\mu/\nu}(1+O(1-x))$. Using \eqref{Catalan solution} and \eqref{Sigma2*>}, we have
\begin{align}
\Sigma_2=&\,\sum_{j\ge 0}C_jy^j x^{i_0(j)}\Sigma_2^*
\ge(1+O(1-x)))\sum_{j\ge 0}C_jy^j \left(\frac{x^{\mu/\nu}}{1-x}\right)^j\notag\\
=&\,(1+O(1-x))\sum_{j\ge 0}C_j (x^{\mu/\nu}/4)^j
=(1+O(1-x))C(x^{\mu/\nu}/4)\notag\\
=&\,(1+O(1-x))\frac{2}{1+\sqrt{1-x^{\mu/\nu}}}.\label{Sigma2=}
\end{align}
Combining \eqref{Sigma1=} and \eqref{Sigma2=} we transform \eqref{T< Sigma1-Sigma2}
into
\begin{align}
T_{\mu/\nu}(x,y)\le&\, 2 - (1+O(1-x))\frac{2}{1+\sqrt{1-x^{\mu/\nu}}}\notag\\
=&\,\frac{2\sqrt{1-x^{\mu/\nu}}}{1+\sqrt{1-x^{\mu/\nu}}}\cdot (1+O(\sqrt{1-x}\,))\label{Tnumu<2sqrt}.
\end{align}
Using \eqref{Tnumu<2sqrt} we replace \eqref{PAnumu<} with
\begin{align}
&\pr(A_{\nu,\mu}) \le_b\, n^4 \bigl[R_{\nu,\mu}+o(1)\bigr]^{\nu}/ \nu^2 ,\label{(Rnumu+o1)nu}\\
&R_{\nu,\mu}:=\frac{4(1+\mu/\nu)^{1+\mu/\nu}\,(m/n)^{\mu/\nu}}{(\mu/\nu)^{\mu/\nu}\,(1+m/n)^{1+\mu/\nu}}
\cdot \left(\frac{\sqrt{1-x^{\mu/\nu}}}{1+\sqrt{1-x^{\mu/\nu}}}\right)^2.\label{Rnumu=}
\end{align}
Define $X=\tfrac{\mu/\nu}{m/n}$. Here, since  $\mu/\nu\ge m/n\to\infty$,
\begin{multline}\label{4Xe1-X}
\frac{4(1+\mu/\nu)^{1+\mu/\nu}}{(\mu/\nu)^{\mu/\nu}}\cdot\frac{(m/n)^{\mu/\nu}}
{(1+m/n)^{1+\mu/\nu}}\\
=4X\bigl(1+1/(\mu/\nu)\bigr)^{1+\mu/\nu}\cdot\bigl(1-1/(1+m/n)\bigr)^{1+\mu/\nu}\\
=4Xe^{1+o(1) -X(1+o(1))},
\end{multline}
uniformly over $X$. We have two cases.

{\bf Case  $X\ge c$.\/}  Since
\[
\frac{\sqrt{1-x^{\mu/\nu}}}{1+\sqrt{1-x^{\mu/\nu}}}\le \frac{1}{2},
\]
we have 
\begin{equation*}
R_{\nu,\mu}\le Xe^{1+o(1) -X(1+o(1))}\le \rho+o(1),\quad\rho:=ce^{1-c}<1,
\end{equation*}
as $c>1$. Thus,
\[
\pr(A_{\nu,\mu})\le_b\, n^4 \bigl[R_{\nu,\mu}+o(1)\bigr]^{\nu} \le n^4(\rho+o(1))^{\nu}
\]
and
\begin{align*}
\sum_{\nu,\mu: X\ge c} \pr(A_{\nu,\mu}) \le_b&\, \sum_{\nu\ge \alpha \log n} \sum_{\mu\ge \nu-1}n^4(\rho+o(1))^{\nu}\\
 \le&\, n^6\sum_{\nu} (\rho+o(1))^{\nu} \le_b  n^6 (\rho+o(1))^{\alpha \log n} \to 0
 \end{align*}
since $\alpha\ge 7/\log(1/\rho)$.

{\bf Case  $X\le c$.\/} The function $\phi(z)=\frac{\sqrt{1-z}}{1+\sqrt{1-z}}$ is decreasing on $(0,1)$, so to find an upper bound for $\phi(x^{\mu/\nu})$, we  want to bound $x^{\mu/\nu}$ from below. We have
\begin{align*}
x^{\mu/\nu}=&\,\exp\bigl[-(\mu/\nu)(1-x)+O((\mu/\nu)(1-x)^2)\bigr]\\
=&\,\exp\bigl[-(\mu/\nu)(1-x)+O(n/m)\bigr].
\end{align*}
Further, using 
\[
\mu/\nu-m/n \le (c-1)(m/n),\quad m-\mu+n-\nu\ge m-\mu \ge (1-c)m,
\]
we compute
\begin{align*}
-\frac{\mu}{\nu}(1-x)&=\,-\frac{\mu}{\nu}\cdot\frac{n-\nu}{m-\mu+n-\nu}\\
&=\,-X-\frac{\mu}{\nu}\left(\frac{n-\nu}{m-\mu+n-\nu}-\frac{n}{m}\right)\\
&=\,-X-\frac{\mu}{\nu}\cdot\frac{n\nu(\mu/\nu - m/n)-n(n-\nu)}{(m-\mu+n-\nu)m}\\
&\ge \,-X-\frac{\mu}{\nu}\cdot\frac{n\nu(\mu/\nu - m/n)}{(m-\mu+n-\nu)m} \\
&\ge \,-X-\frac{\mu}{\nu}\cdot\frac{n\nu(c-1)(m/n)}{(c-1)m^2} \\
&= \, -X - \mu/m \ge -c-(2-c)=-2.
\end{align*}
Consequently,
$ x^{\mu/\nu} \ge e^{-3}$, and $\phi(x^{\mu/\nu})\le \phi(e^{-3}) \le 0.494$.
Since $Xe^{1-X}$ is decreasing on $(1,\infty)$ and takes the value $1$ for $X=1$, we have 
\[
Xe^{1+o(1) -X(1+o(1))} \le 1+o(1)
\]
for $1\le X\le c$. Therefore,
\[
R_{\nu,\mu} \le 4\times (0.495)^2 \le 0.981
\]
and
\[
\pr(A_{\nu,\mu})\le_b\, n^4 \bigl[R_{\nu,\mu}+o(1)\bigr]^{\nu} \le n^4 (0.99)^{\nu} 
\]
As in the previous case,
\begin{align*}
\sum_{\nu,\mu: X\le c} \pr(A_{\nu,\mu})& \le_b \sum_{\nu} \sum_{\mu}n^4(0.99)^{\nu}\\
& \le n^6\sum_{\nu} (0.99)^{\nu} \le_b  n^6 (0.99)^{\alpha \log n} \to 0,
\end{align*}
since $\alpha>7\log (1/0.99)$. 
\end{proof}

Letting $c\downarrow 1$, we arrive at 
\begin{Corollary}\label{cor1} Whp,
\begin{itemize}[itemsep=0pt, parsep=0pt, topsep=0pt, partopsep=0pt]
\item either the densest component is of size $O(\log n)$, 
\item or its number of crossings is almost $m$, whence its size is at least $\sqrt{2m}$.
\end{itemize}
\end{Corollary}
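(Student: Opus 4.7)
The plan is to deduce the corollary directly from Lemma~\ref{maxden} by letting the parameter $c$ approach $1$. Fix $c \in (1,2)$ and let $\alpha = \alpha(c) = 7\max\{\log(1/ce^{-c}),\,\log(1/0.99)\}$. Writing $(\nu^\ast,\mu^\ast)$ for the parameters of the densest component, Lemma~\ref{maxden} implies
\[
\pr\bigl(\nu^\ast \ge \alpha(c)\log n \text{ and } \mu^\ast \le (2-c)m\bigr) \longrightarrow 0.
\]
Hence with high probability, one of the following holds: either $\nu^\ast < \alpha(c)\log n$, giving the first alternative of the corollary; or $\mu^\ast > (2-c)m$.

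In the second case, I would invoke the trivial bound $\mu^\ast \le \binom{\nu^\ast}{2}$, valid because the intersection graph is a simple graph on $\nu^\ast$ vertices. Combined with $\mu^\ast > (2-c)m$, this gives
\[
\nu^\ast \ge \tfrac{1+\sqrt{1+8(2-c)m}}{2} \ge \sqrt{2(2-c)m}.
\]

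To upgrade these $c$-dependent statements to the limiting assertion that $\mu^\ast$ is \emph{almost} $m$, I would use a standard diagonal argument: choose a sequence $c_k \downarrow 1$ and corresponding $n_k \to \infty$ so that for $n \ge n_k$,
\[
\pr\bigl(\alpha(c_k)\log n \le \nu^\ast \text{ and } \mu^\ast \le (2-c_k)m\bigr) < 1/k.
\]
Setting $c(n) := c_k$ for $n_k \le n < n_{k+1}$ yields a function $c(n) \downarrow 1$ along which the corresponding event still has probability $o(1)$. Therefore whp either $\nu^\ast = O(\log n)$, or $\mu^\ast \ge (2-c(n))m = (1-o(1))m$, whence $\nu^\ast \ge \sqrt{2(1-o(1))m} = (1+o(1))\sqrt{2m}$.

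Since Lemma~\ref{maxden} is the substantive ingredient and its proof is already completed, there is no real obstacle here; the only mildly delicate point is the diagonalization, which is entirely routine and requires only that $\alpha(c)$ remain bounded as $c \downarrow 1$ (which holds since $\log(1/ce^{-c}) = c - 1 - \log c \to 0^+$ but $\log(1/0.99) > 0$ keeps $\alpha(c)$ bounded away from $0$ and infinity on a neighborhood of $1$).
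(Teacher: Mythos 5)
Your proof follows the same route the paper intends — the paper's entire proof of this corollary is the one clause ``Letting $c\downarrow 1$, we arrive at\dots'' — and you have simply filled in the two things left implicit: the trivial bound $\mu^\ast\le\binom{\nu^\ast}{2}$ for the second alternative, and a diagonalization to replace the fixed $c$ by a slowly decreasing $c(n)$. Both of those are the right moves.

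The one place I would push back is the parenthetical claim that $\alpha(c)$ stays bounded as $c\downarrow 1$. You base this on the formula $\alpha=7\max\{\log(1/ce^{-c}),\log(1/0.99)\}$ from the statement of Lemma~\ref{maxden}, but if you look at how $\alpha$ is actually used in that lemma's \emph{proof}, the two cases require $\alpha\ge 7/\log(1/\rho)$ with $\rho=ce^{1-c}$ and, in the second case, that $n^6(0.99)^{\alpha\log n}\to 0$, i.e.\ $\alpha>6/\log(1/0.99)$. So the displayed formula in the lemma statement appears to have its logarithms inverted; the quantity the proof needs, $7/\log\!\bigl(1/(ce^{1-c})\bigr)=7/(c-1-\log c)$, tends to $+\infty$ as $c\downarrow 1$, not to a finite limit. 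Consequently your diagonalization gives $\nu^\ast<\alpha\bigl(c(n)\bigr)\log n$ with $\alpha(c(n))$ growing (however slowly), which is not literally $O(\log n)$ with an absolute constant. This does not threaten the corollary: the clean reading, consistent with how the paper uses it downstream (e.g.\ Corollary~\ref{cor2}, where the alternative is explicitly ``no component of size $\nu$ with $\nu/\log n$ exceeding a large constant''), is the $\eps$-quantified one — for every $\eps>0$ there is $K_\eps$ such that whp either $\nu^\ast\le K_\eps\log n$ or $\mu^\ast>(1-\eps)m$ — which follows from Lemma~\ref{maxden} directly with $c=1+\eps$ and no diagonalization needed. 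So either drop the diagonalization and state the $\eps$-dependent version, or keep it but acknowledge that the constant in $O(\log n)$ is $c(n)$-dependent; the assertion that $\alpha(c)$ stays bounded should not be relied upon, as it reads a typo literally. The rest (the $\binom{\nu^\ast}{2}$ step and the resulting $\nu^\ast\ge(1-o(1))\sqrt{2m}$) is fine.
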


\begin{Remark} This is a good place to notice that the sole reason for $\log n$ to appear in the
first alternative was that we confined ourselves to $m=O(n\log n)$ meeting the 
constraint \eqref{m<},
in which case $T_{n,m}$ is bounded from below by $C_n\binom{n+m-1}{n-1}\exp(-\gamma
\log n)$. For the constraint $n\log n\ll m \ll n^{3/2}$ we still have the lower bound \eqref{lowersimple}
from Lemma \ref{bound,allmn}, 
\[
T_{n,m}\ge_b \exp\bigl(-\Theta((m/n)\log n)\bigr) C_n\binom{n+m-1}{n-1}.
\]
To off-set this exponential factor,  we could have confined ourselves to $\nu$ of order 
$(m/n)\log n$,
at least, arriving at the counterpart of Corollary \ref{cor1} with the first alternative becoming
``either the densest component is of size $O((m/n)\log n)$'', but with the second alternative
remaining unchanged. In other words, the gap property for the crossing density of the densest 
component continues to hold for $n\log n \ll m\ll n^{3/2}$. 
\end{Remark}
 
Now if $m=\Theta(n\log n)$, and the densest component
has size $\nu$ then for the number of crossings we have
\[
\frac{\nu(\nu-1)}{2}\ge \mu \ge \nu \frac{m}{n}\Longrightarrow \nu \ge 2 m/n=\Theta(\log n).
\]
So, if $\nu=O(\log n)$, then $\nu=\Theta(\log n)$ and $\mu=\Theta((\log n)^2)$, and the
maximum density $\mu/\nu$ is of order $m/n$ exactly. That's the reason why in  the rest
of the paper we continue to stick with $m=\Theta(n\log n)$.

\begin{Lemma}\label{mu/m<1} 
Given fixed $c\ge 1$, $b>1$, let $B_{n,m}=B_{n,m}(c,b)$ denote the event:  the maximum density is below $c\,m/n$ and 
there is a $(\nu,\mu)$-component meeting the constraints
\begin{equation}\label{defBnm}
\nu\ge b \log n,\quad  \mu\le (1-b^{-1/3})m.
\end{equation}
For every $c\ge 1$, there exists $b=b(c)>1$ such that $\pr(B_{n,m})\to 0$. 
\end{Lemma}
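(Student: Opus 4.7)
The approach mirrors that of Lemma~\ref{maxden}, except that the generating function is truncated at the density level $K:=cm/n$ rather than at $\mu/\nu$. On $B_{n,m}$, every component of the diagram has density at most $K$; in particular, if one fixes any $(\nu,\mu)$-component meeting \eqref{defBnm}, the $2\nu$ remaining subdiagrams consist entirely of components of density $\le K$. Cyclic symmetry then yields
\[
\pr(B_{n,m})\le\sum_{(\nu,\mu)\in\mathcal V}\frac{nC_{\nu,\mu}}{\nu T_{n,m}}\,[x^{m-\mu}y^{n-\nu}]\,T^{[K]}(x,y)^{2\nu},
\]
where $\mathcal V=\{(\nu,\mu):\nu\ge b\log n,\ \mu\le(1-b^{-1/3})m,\ \mu/\nu\le K\}$ and $T^{[K]}$ is the generating function for diagrams all of whose components have density $\le K$. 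Any such diagram itself has density $\le K$, so $T^{[K]}\le T_K$ coefficientwise, and I can replay the Abelian-summation derivation in \eqref{T< Sigma1-Sigma2}--\eqref{Tnumu<2sqrt} with $K$ in place of $\mu/\nu$ to obtain $T_K(x,(1-x)/4)\le(2\sqrt{1-x^K}/(1+\sqrt{1-x^K}))(1+O(\sqrt{1-x}))$. Choosing $x=(m-\mu)/(m-\mu+n-\nu)$, combining with the lower bound for $T_{n,m}$ from Lemma~\ref{ell components} and \eqref{upper Cnumu} for $C_{\nu,\mu}$, and running the same simplifications that produced \eqref{(Rnumu+o1)nu}, one arrives at
\[
\pr(B_{n,m})\le_b\sum_{(\nu,\mu)\in\mathcal V}\frac{n^4}{\nu^2}R_{\nu,\mu}^\nu,\qquad R_{\nu,\mu}=4Xe^{1-X}\phi(x^K)^2(1+o(1)),
\]
with $X=(\mu/\nu)/(m/n)$ and $\phi(z)=\sqrt{1-z}/(1+\sqrt{1-z})$.

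The core step is to show $R_{\nu,\mu}\le R_0(c,b)<1$ uniformly on $\mathcal V$. Since $m/n\to\infty$, direct computation gives $x^K\to e^{-\gamma}$ with $\gamma=c(1-\beta)/(1-\alpha)$, $\alpha=\mu/m$, $\beta=\nu/n$; the constraint $\alpha\le 1-b^{-1/3}$ bounds $\gamma\le cb^{1/3}$, keeping $\phi(x^K)^2$ bounded strictly below $1/4$. Combined with $4Xe^{1-X}\le 4$ (equality only at $X=1$, where $\gamma=c$ and $R=4\phi(e^{-c})^2<1$), this yields $R_{\nu,\mu}<1$ pointwise on the compact admissible region, hence $R_0(c,b)<1$ by continuity. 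The principal obstacle is quantifying the gap, because $R_0(c,b)\to 1$ as $b\to\infty$. The worst case is $X=1+\epsilon$ with $\alpha=1-b^{-1/3}$; Taylor expansion gives $1-R_{\nu,\mu}\approx\epsilon^2/2+e^{-cb^{1/3}\epsilon}/2$, and the minimizer in $\epsilon$ satisfies $te^t=c^2b^{2/3}/2$ for $t=cb^{1/3}\epsilon$, so $t\sim\tfrac{2}{3}\log b$ and $1-R_0(c,b)=\Theta(c^{-2}b^{-2/3}\log^2 b)$.

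Summing finally gives $\pr(B_{n,m})\le n^{O(1)}R_0(c,b)^{b\log n}=\exp(\log n\cdot(O(1)-b\log(1/R_0(c,b))))$. Since $b\log(1/R_0(c,b))=\Theta(b^{1/3}\log^2 b/c^2)\to\infty$, any sufficiently large $b=b(c)>1$ makes this exponent negative, whence $\pr(B_{n,m})\to 0$. This also explains the exponent $b^{-1/3}$ appearing in \eqref{defBnm}: it is precisely the scaling that keeps $\gamma$ polynomially large in $b$ while leaving room for the optimal $\epsilon=\Theta(b^{-1/3}\log b)$ to sit inside the admissible window, and that makes the overall rate $b^{1/3}\log^2 b$ diverge.
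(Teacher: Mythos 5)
Your proof is essentially the paper's: the same cyclic-symmetry reduction, the same Chernoff-type bound via $C_{\nu,\mu}$ and the density-ceiling generating function (your $T^{[K]}$, which you correctly note is dominated coefficientwise by the paper's $T_{cm/n}$), the same saddle-point choice $x=(m-\mu)/(m-\mu+n-\nu)$, $y=(1-x)/4$, and the same $R_{\nu,\mu}$ involving $\phi(x^{cm/n})$. Where you depart is in how the uniform gap $\sup_{\mathcal{V}}R_{\nu,\mu}<1$ is quantified. The paper splits $\mathcal{V}$ into the cases $|X-1|\ge Ab^{-1/2}$ (where it uses only $Xe^{1-X}\le e^{-A^2/(3b)}$ and the crude $4\phi^2\le 1$) and $|X-1|\le Ab^{-1/2}$ (where it shows $x^{cm/n}\to e^{-c}$ so that $4\phi(x^{cm/n})^2$ is bounded below $1$ by a constant depending only on $c$); you instead minimize $1-R$ over $\epsilon=X-1$ with $\alpha$ at its boundary $1-b^{-1/3}$ and obtain the sharper $1-R_0(c,b)=\Theta(c^{-2}b^{-2/3}\log^2 b)$, which nicely explains the exponent in \eqref{defBnm}. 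Two things to firm up. First, the exact relation is $\gamma\approx c+c\epsilon b^{1/3}$ (not $c\epsilon b^{1/3}$), so $e^{-\gamma}\approx e^{-c}e^{-c\epsilon b^{1/3}}$ and the stationarity equation becomes $te^t=e^{-c}c^2b^{2/3}/2$; this only changes constants and $t\sim\tfrac23\log b$ still holds, so the conclusion is unaffected. Second, the ``pointwise on a compact region / continuity'' sentence gives only $R_0<1$, which (as you yourself observe) is useless when $R_0\to 1$ with $b$; the real content is the quantitative optimization, and for that you should (i) verify explicitly that the stated $(X,\alpha)$ is the global maximizer of $R$ over $\mathcal{V}$ rather than just near $X=1$ (for $X$ bounded away from $1$, or $\alpha$ well below $1-b^{-1/3}$, one must check $R$ is even smaller), and (ii) absorb the $(1+O(\sqrt{1-x}\,))^{2\nu}$ correction uniformly, which the paper records as the $(1+O(b^{-1}))^{\nu}$ term in its bound and which works because $\sqrt{1-x}=O(b^{1/6}\sqrt{n/m})\to 0$ on $\mathcal{V}$. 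None of these are gaps in the idea, but they are needed for a complete argument.
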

\begin{proof} First of all, in view of Lemma \ref{densitygap}, by choosing $b$ sufficiently
large we can consider only  $(\nu,\mu)$-components with $\mu/\nu\ge d m/n$, with some
fixed $d>0$. Also, for $\mu$  satisfying  \eqref{defBnm},
\[
\frac{m-\mu}{n-\nu}\ge \frac{mb^{-1/3}}{n}=\Theta(b^{-1/3}\log n)\to\infty.
\]
Arguing as in the proof of Lemma \ref{maxden}, we obtain
\begin{equation}\label{one}
\pr(B_{n,m})\le_b n^4\sum_{\nu,\mu}\bigl[R_{\nu,\mu}(1+O(b^{-1}))\bigr]^{\nu},
\end{equation}
where the sum is over all $(\nu,\mu)$ satisfying \eqref{defBnm}, but instead of \eqref{Rnumu=} we get 
\begin{align*}
R_{\nu,\mu}:=&\,4\frac{(1+\mu/\nu)^{1+\mu/\nu}}{(\mu/\nu)^{\mu/\nu}}\cdot\frac{(m/n)^{\mu/\nu}}
{(1+m/n)^{1+\mu/\nu}}
\cdot \left(\frac{\sqrt{1-x^{cm/n}}}{1+\sqrt{1-x^{cm/n}}}\right)^2\\
\le&\, \frac{(1+\mu/\nu)^{1+\mu/\nu}}{(\mu/\nu)^{\mu/\nu}}\cdot\frac{(m/n)^{\mu/\nu}}
{(1+m/n)^{1+\mu/\nu}}.
\end{align*}
Here as before 
\begin{equation}\label{two}
x=(m-\mu)/(m-\mu+n-\nu)=1-O(b^{-1}).
\end{equation}
(The remainder $O(b^{-1})$ in
\eqref{two} is the reason for the same remainder in \eqref{one}.)  Again, set $X=\tfrac{\mu/\nu}{m/n}$. Since  $m/n\to\infty$ and $\mu/\nu\to\infty$,
\begin{equation*}
R_{\nu,\mu}(1+O(b^{-1}))\le Xe^{1-X+O(b^{-1})},
\end{equation*}
The log-concave function $H(X):=Xe^{1-X}$ attains its absolute maximum $1$ at $X=1$. Let $A>0$ be a constant and first consider the contribution of $X$'s with $|X-1|\ge Ab^{-1/2}$.
We have
\begin{multline*}
\max\{H(X): |X-1|\ge A b^{-1/2}\}\\
\le\,\max\bigl\{H(1-Ab^{-1/2}),H(1+Ab^{-1/2})\bigr\}
\le\exp\bigl[-A^2/(3b)\bigr].
\end{multline*}
Thus, for this range of $X$,
\[
R_{\nu,\mu}(1+O(b^{-1}))\le \exp\bigl[-A^2/(4b)\bigr]
\]
if we choose $A$ sufficiently large. So 
\begin{equation*}
 n^4\cdot \!\!\!\!\!\!\!\!\sum_{\nu\ge b\log n\atop \left|X-1\right|\ge Ab^{-1/2}}
 \!\!\!\!\!\!\!\!\!\bigl[R_{\nu,\mu}(1+O(b^{-1}))\bigr]^{\nu}
 \le\, n^4\cdot \!\!\!\!\sum_{\nu\ge b\log n}\!\!\nu^2\exp\bigl[-\nu A^2/(4b)\bigr]\to 0,
 \end{equation*}
if $b(A^2/(4b))>5$, that is, if  $A^2>20$. The factor $\nu^2$ in the second sum is due to the fact that there are at most ${\nu \choose 2}$ values of $\mu$.

Now consider the contribution of $(\nu,\mu)$ where $\left|X-1\right|\le  Ab^{-1/2}$. 
We have
\[
x^{cm/n}=\exp\bigl[-(1-x)cm/n +O((1-x)^2 m/n)\bigr]
\]
and
\begin{align*}
(1-x)\frac{m}{n}=&\,\frac{m}{m+n}\left[1+\frac{\nu(\mu/\nu-m/n)}{m-\mu+n-\nu}\right]\\
=&\,\frac{m}{m+n}\bigl[1+O(\mu b^{-1/2}/(m-\mu))\bigr]\\
=&\,\frac{m}{m+n}\bigl[1+O(b^{-1/6})\bigr] =\, 1 +o(1).
\end{align*}
Therefore, introducing $\rho=2\tfrac{\sqrt{1-e^{-c}}}{1+\sqrt{1-e^{-c}}}<1$, we obtain
\[
R_{\nu,\mu}(1+O(b^{-1}))\le \rho \bigl(1+O(b^{-1/6})\bigr)Xe^{1-X}\le \rho^{1/2},
\]
if $b$ is large enough. We conclude that
\begin{align*}
 n^4\!\!\!\!\!\!\!\!\sum_{\nu\ge b\log n\atop |X-1|\le Ab^{-1/2}}
\!\!\!\!\!\!\!\!\! \bigl[R_{\nu,\mu}(1+&O(b^{-1}))\bigr]^{\nu}
 \le \,n^4\!\!\!\!\sum_{\nu\ge b\log n}\!\!\nu^2(\rho^{1/2})^{\nu}\to 0,
 \end{align*}
if  $b$ is sufficiently large.
\end{proof}

\begin{Corollary}\label{cor2} 
Suppose that $\lim_{n\to\infty} m/(n\log n)\in (0,2/\pi^2)$. Whp, 
\begin{itemize}[itemsep=0pt, parsep=0pt, topsep=0pt, partopsep=0pt]
\item either there exists a (necessarily unique)  component  that contains almost all $m$ crossings, whence has at least $\sqrt{2m}$ vertices,
\item or there is no component of size $\nu$ with $\nu/\log n$ exceeding a large constant.
\end{itemize}
\end{Corollary}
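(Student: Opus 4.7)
The plan is to assemble Corollary \ref{cor1} and Lemma \ref{mu/m<1} into the required dichotomy. I will fix a precision parameter $\delta>0$, which I will eventually send to zero. By Corollary \ref{cor1}, taking $c=1+\delta$ in the underlying Lemma \ref{maxden}, whp the densest component either has size at most $C\log n$ for some constant $C=C(\delta)$ (call this event (A)), or contains more than $(1-\delta)m$ crossings (event (B)). Event (B) already supplies the first alternative, since any such component then has at least $\sqrt{2(1-\delta)m}$ vertices, and uniqueness is automatic: two disjoint components cannot together account for more than $m$ crossings.

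On event (A), the key observation is that the maximum crossing density $\mu_{\max}/\nu_{\max}$ is automatically $O(m/n)$. Indeed, on (A), $\nu_{\max}\le C\log n$ forces $\mu_{\max}/\nu_{\max}\le (\nu_{\max}-1)/2=O(\log n)$, and since $m/n=\Theta(\log n)$ by the hypothesis $\lim m/(n\log n)\in(0,2/\pi^2)$, there is a constant $c^\ast$ (depending only on $C$ and the limit) with $\mu_{\max}/\nu_{\max}\le c^\ast\,m/n$ throughout (A). I then apply Lemma \ref{mu/m<1} with this $c^\ast$, choosing the free parameter $b=b(c^\ast)$ large enough that $b^{-1/3}<\delta$. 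The conclusion is that, whp, no component with at least $b\log n$ vertices has fewer than $(1-\delta)m$ crossings.

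Intersecting the above high-probability events on (A) yields the desired alternative: either no component exceeds $b\log n$ vertices, in which case the second alternative of the corollary holds with large constant $b$; or some component does exceed $b\log n$ vertices, and by the previous paragraph it must carry at least $(1-\delta)m$ crossings, returning us to the first alternative. Running $\delta$ through a sequence tending to zero upgrades the qualifier ``almost all'' to the full $(1-o(1))m$ statement. The main technical difficulty has already been absorbed into Lemmas \ref{maxden} and \ref{mu/m<1}; the only new ingredient is the simple but crucial observation that the $O(\log n)$ size cap on the densest component from Corollary \ref{cor1} automatically produces the bounded-density hypothesis that Lemma \ref{mu/m<1} requires as input. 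The obstacle I would most expect to stumble on is verifying that the parameter $b$ in Lemma \ref{mu/m<1} can indeed be chosen \emph{arbitrarily} large once $c$ is fixed (rather than as a single predetermined value); a quick inspection of that lemma's proof, in which $b$ is eventually taken ``sufficiently large'' in multiple estimates, confirms that this is the case.
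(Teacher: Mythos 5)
Your proposal is correct and takes the same route as the paper, whose proof of Corollary \ref{cor2} is a single line citing Lemma \ref{maxden} and Lemma \ref{mu/m<1}. You have supplied exactly the glue that the paper leaves implicit: the size cap from the first alternative of Corollary \ref{cor1}, combined with $m/n=\Theta(\log n)$, converts into the bounded-density hypothesis required by Lemma \ref{mu/m<1}, and (as you correctly checked against the proof) the parameter $b$ there may be taken arbitrarily large once $c$ is fixed.
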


\begin{proof}
It follows directly from Lemma \ref{maxden} and Lemma~\ref{mu/m<1}.
\end{proof}

The preceding analysis was based on the bound \eqref{uppersimple}, which was implied by $T_{n,m}\le C_n I_{n,m}$.
Given $k ,\ell>1$ and $s\le k$, let $T_{n,m}(k,\ell,s)$ denote the total number of diagrams with $k$ components, each of size not exceeding $\ell$, and with exactly $s$ components of size $1$,
i.e., isolated chords.  Obviously, 
\begin{equation}\label{Tnmkells=0?}
T_{n,m}(k,\ell,s)=0\quad \text{ if } \quad \ell (k - s) < n-s.
\end{equation}
\begin{Lemma}\label{Tnm(k,ell,s)<} Introducing
\[
I_j(x):=\sum_{\mu\ge 0} I_{j,\mu} x^{\mu}=(1+x)\cdots(1+x+\cdots+x^{j-1}),
\]
we have: 
\begin{equation}\label{TNm(k,ell)Ijx}
T_{n,m}(k,\ell,s)\le \frac{(2n)_{k-1}}{(k-s)!s!}\,[x^my^{n-s}]\left(\sum_{j=2}^{\infty}C_jI_j(x)y^j\right)^{k-s}.
\end{equation}
\end{Lemma}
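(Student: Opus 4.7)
The plan is to first establish the classical structural fact that the partition of $[2n]$ induced by the connected components of the intersection graph of a chord diagram is a linear non-crossing partition, then invoke Kreweras' formula to count such partitions by block-size profile, and finally apply the bound $T_{j,m}\le C_jI_{j,m}$ from Lemma~\ref{bound,allmn}(i) to control the contribution of each component. The non-crossing partition claim can be shown by contradiction: if components $C\ne C'$ were crossing at $a<b<c<d$ (with $a,c\in C$, $b,d\in C'$), then by following a chord of $C$ out of $a$, and if necessary iterating along a path in the intersection graph of $C$ connecting $a$ to $c$, one is eventually forced to produce a chord of $C$ that crosses a chord of $C'$, contradicting $C\ne C'$.

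With this in hand, let $\lambda_j$ denote the number of components having $j$ chords, so that $\lambda_1=s$, $\sum_j\lambda_j=k$, $\sum_j j\lambda_j=n$, and $\lambda_j=0$ for $j>\ell$. A component with $j$ chords corresponds to a block of size $2j$ in the induced non-crossing partition of $[2n]$, so Kreweras' formula gives the number of non-crossing partitions of $[2n]$ with this profile as $(2n)_{k-1}/\prod_j\lambda_j!$. Each such partition may be independently decorated by placing a connected chord diagram on each block; writing $c_j(x):=\sum_m C_{j,m}x^m$ (with $c_1(x)\equiv 1$), the total weight of such decorations by crossings is $[x^m]\prod_j c_j(x)^{\lambda_j}$. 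Summing over all admissible $\lambda$, factoring out $\lambda_1=s$, and introducing $y$ to track the total chord count, the remaining sum over $(\lambda_j)_{j\ge 2}$ with $\sum_{j\ge 2}\lambda_j=k-s$ is precisely a multinomial expansion, which produces the identity
\begin{equation*}
T_{n,m}(k,\ell,s)=\frac{(2n)_{k-1}}{(k-s)!\,s!}\,[x^my^{n-s}]\Bigl(\sum_{j=2}^{\ell}c_j(x)y^j\Bigr)^{k-s}.
\end{equation*}

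To conclude, apply $c_j(x)\le T_j(x)\le C_jI_j(x)$ from Lemma~\ref{bound,allmn}(i) and enlarge the upper summation index from $\ell$ to $\infty$, which is legitimate because all coefficients are non-negative so the coefficient extraction can only grow; this yields exactly the claimed inequality. The principal obstacle is the non-crossing partition fact: while elementary, it requires a short careful case analysis (or a citation). Everything else is bookkeeping, with Kreweras' formula doing the combinatorial heavy lifting and the multinomial expansion packaging the various component sizes into the stated generating function.
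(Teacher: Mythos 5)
Your proof is correct and follows essentially the same route as the paper: both identify the non-crossing partition of $[2n]$ into component point sets, invoke Kreweras' formula for the partition count, apply the bound $T_{j,\mu}\le C_jI_{j,\mu}$ from Lemma~\ref{bound,allmn}(i), and package the sum over block-size profiles as the $(k-s)$-th power of a generating function. The only (cosmetic) difference is that you first establish an exact identity in terms of the connected counts $C_{j,\mu}$ and then bound term-by-term and extend the summation index, whereas the paper over-counts immediately by using $T_{j,\mu}$ in place of $C_{j,\mu}$ and uses a two-variable exponential generating function instead of the multinomial expansion to perform the same bookkeeping.
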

\begin{proof} For a generic diagram with parameters $n$ and $m$, with $k$ components, of
size not exceeding $\ell$, and $s$ components of size $1$,
let $s_j$ denote the total number of components of size $j$; so $\bs s=(s_1,s_2,\dots,s_n)$
meet the conditions: 
\begin{equation}\label{smeet}
s_1=s;\quad (\forall j>\ell) \  s_j=0; \quad\sum_{j=2}^n s_j=k-s;\quad \sum_{j=2}^njs_j=n-s.
\end{equation}
For such a diagram to exist, it is necessary that the point sets of the components form
a non-crossing partition of $[2n]$. By Kreweras's formula \cite{Kreweras}, the total number of
such partitions is $(2n)_{k-1}/[s_1!s_2!\cdots]$. In addition, for each $2\le j\le n$, and $1\le t\le s_j$, let $m_{j,t}$ denote the number of crossings of the $t$-th component from the arbitrarly ordered
list of all components of size $j$. Clearly, $\bs m= \{m_{j,t}\}$ meets the condition
\begin{equation}\label{mjtmeet}
\sum_{j=2}^n\sum_{t=1}^{s_j}m_{j,t}=m.
\end{equation}
Then,
\allowdisplaybreaks{
\begin{align}
T_{n,m}(k,\ell,s)\le&\,\sum_{\bs s\text{ meets }\eqref{smeet}}\frac{(2n)_{k-1}}{s_1!\cdots s_n!}
\sum_{\bs m\text{ meets }\eqref{mjtmeet}}\prod_{2\le j\le n\atop1\le t\le s_j}T_{j,m_{j,t}}\notag\\
\le&\,\frac{1}{s!}\sum_{\bs s\text{ meets }\eqref{smeet}}\frac{(2n)_{k-1}}{s_2!\cdots s_n!}
\sum_{\bs m\text{ meets }\eqref{mjtmeet}}\prod_{2\le j\le n\atop 1\le t\le s_j} C_jI_{j,m_{j,t}}\notag\\
=&\,\frac{(2n)_{k-1}}{s!}\sum_{\bs s\text{ meets }\eqref{smeet}}\frac{1}{s_2!\cdots s_n!}\,\,
[x^m]\prod_{j=2}^n \left(C_j\sum_{\mu\ge 0}I_{j,m}x^{\mu}\right)^{s_j}\notag\\
=&\,\frac{(2n)_{k-1}}{s!}\,\, [x^m]\sum_{\bs s\text{ meets }\eqref{smeet}}\prod_{j=2}^n\frac{(C_jI_j(x))^{s_j}}
{s_j!}.\label{Tnm(k,ell)=[xm]}
\end{align}
}
Here the last sum is at most 
\begin{multline}\label{lastsum=[yn]}
[y^{n-s}z^{k-s}] \sum_{\bs s\ge \bs 0\atop \sum\limits_{j\ge 2} js_j<\infty}\prod_{j=2}^\infty\frac{(y^jzC_jI_j(x))^{s_j}}{s_j!}
=\,[y^{n-s}z^{k-s}]\exp\left(z\sum_{j\ge 2}y^jC_jI_j(x)\right) 		\\
=\,[y^{n-s}]\,\frac{1}{(k-s)!}\left(\sum_{j=2}^{\infty}y^jC_jI_j(x)\right)^{k-s}.
\end{multline}
Equations \eqref{Tnm(k,ell)=[xm]} and \eqref{lastsum=[yn]} imply \eqref{TNm(k,ell)Ijx}, which finishes the proof.
\end{proof}

Using 
\[
(1+x)\cdots(1+x+\cdots+x^{j-1})=(1-x)^{-j} (1-x)\cdots(1-x^j)
\]
we get:  for $k-s\ge (n-s)/\ell$,
\begin{equation*}
T_{n,m}(k,\ell,s)\le\frac{(2n)_{k-1}}{(k-s)!s!} \,[x^{m} y^{n-s}]\left(\sum_{j=2}^{\infty}C_j\left(\frac{y}{1-x}\right)^j
\prod_{t=1}^j(1-x^t)\right)^{k-s}.
\end{equation*}
The bivariate series on the RHS has positive coefficients, and converges for $x\in (0,1)$,
$y\in (0, (1-x)/4]$. So, by Chernoff-type bound with $x\in (0,1)$ and $y=(1-x)/4$, we obtain
\begin{align*}
T_{n,m}(k,\ell,s)\le&\,\frac{(2n)_{k-1}}{(k-s)!s!} \,x^{-m} y^{-(n-s)}\left((1-x)(1-x^2)\sum_{j=2}^{\infty}\frac{C_j}{4^j}
\prod_{t=3}^j(1-x^t)\right)^{k-s}\\
\le&\,\frac{4^{n-s}(2n)_{k-1}}{(k-s)!s!} \,x^{-m} (1-x)^{-(n+s-2k)}\bigl[1/4 +O(1-x)\bigr]^{k-s}\\
=&\,\frac{4^{n-k}(2n)_{k-1}}{(k-s)!s!}\, x^{-m} (1-x)^{-(n+s-2k)}\bigl[1 +O(1-x)\bigr]^{k-s}.
\end{align*}
Choosing $x=m/(m+n)$ we get
\begin{equation}\label{Tnmkells<explicit}
\begin{aligned}
T_{n,m}(k,\ell,s)\le&\, {k \choose s}\frac{4^{n-k} (2n)_{k-1}}{k!} \\
&\times\frac{(m+n)^{m+n+s-2k}}{m^m n^{n+s-2k}}\,\bigl[1 +O(n/(m+n))\bigr]^{k-s}.
\end{aligned}
\end{equation}
\begin{Theorem}\label{almostm,m=nlogn} 
Suppose that $\lim m/(n\log n)\in (0,2/\pi^2)$. Then whp
there exists a component that has almost all $m$ crossings.
\end{Theorem}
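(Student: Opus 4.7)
The plan is to derive the theorem from Corollary \ref{cor2} by ruling out its second alternative. Choose $A$ so large that Corollary \ref{cor2} applies with the ``large constant times $\log n$'' threshold equal to $A\log n$; then whp the random diagram either contains the desired giant component or has every component of size at most $\ell := A\log n$. It therefore suffices to prove
\[
\sum_{(k,s):\, 0\le s\le k,\ (k-s)\ell\ge n-s} T_{n,m}(k, \ell, s) = o(T_{n,m}).
\]

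For the denominator, the $\ell=1$ case of Lemma \ref{ell components} (valid because $\lim m/(n\log n) < 2/\pi^2$) gives $T_{n,m}$ of order $C_n \binom{n+m-1}{n-1}\prod_{j\ge 1}(1-q^j)^3$, $q=m/(m+n)$; by Freiman's formula \eqref{Freiman} the Jacobi product equals $n^{-\pi^2 c/2 + o(1)}$, where $c=\lim m/(n\log n)$. For the numerator, we apply \eqref{Tnmkells<explicit}. Standard Stirling estimates for $C_n$, $(2n)_{k-1}/k!$, and $\binom{n+m-1}{n-1}$, combined with $n/(m+n) \sim 1/(c\log n)$, reduce the ratio $T_{n,m}(k,\ell,s)/T_{n,m}$, up to polynomial-in-$n$ factors, to
\[
\binom{k}{s}\left(\frac{en}{2k}\right)^k (c\log n)^{-(2k-s)}.
\]

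The admissibility constraint $(k-s)\ell \ge n-s$ forces $s\le s_{\max}\approx k - n/\ell$, so summing geometrically over $s\in[0,s_{\max}]$ gives a factor of order $(c\log n)^{s_{\max}}$. The $s$-summed ratio at fixed $k$ is thus $O\bigl((en/(kc\log n))^k (c\log n)^{-n/\ell}\bigr)$; the second factor equals $\exp\bigl(-(n/(A\log n))\log\log n + O(n/\log n)\bigr)$, super-polynomially small. The first factor $(en/(kc\log n))^k$ is maximized at $k^*=n/(c\log n)$ with peak $e^{n/(c\log n)}$, but this peak is dwarfed by the second factor because $\log\log n/A > 1/c$ for all sufficiently large $n$. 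Thus each $k$-th term is $\exp(-\Omega((n/\log n)\log\log n))$; summing over the at most $n$ admissible $k$ and multiplying by the omitted polynomial-in-$n$ factors---in particular Freiman's $n^{\pi^2 c/2}$, which stays below $n$ since $c<2/\pi^2$---keeps the total $o(1)$.

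The main obstacle is the bookkeeping of the $s$-sum: bounding it by the crude $\sum_s\binom{k}{s}(c\log n)^s \le (1+c\log n)^k$ produces a useless growth factor $e^{n/(c\log n)}$ at $k=n/(c\log n)$, so the admissibility cap $s \le s_{\max} \approx k-n/\ell$ must be enforced to preserve the $(c\log n)^{-n/\ell}$ decay. Once this is done, all remaining corrections---including the error $[1+O(n/(m+n))]^{k-s} \le \exp(O(n/\log n))$ from \eqref{Tnmkells<explicit}---are comfortably absorbed by the $\exp(-\Omega(n\log\log n/\log n))$ decay, since $\log\log n \to \infty$.
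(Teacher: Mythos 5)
Your proposal follows the same broad strategy as the paper: reduce via Corollary~\ref{cor2} to showing that whp not all components have size at most $\ell=A\log n$, then bound $\sum_{k,s} T_{n,m}(k,\ell,s)/T_{n,m}$ using \eqref{Tnmkells<explicit}, the asymptotics of Lemma~\ref{ell components}, and Freiman's estimate. The genuine difference is how the number $s$ of isolated chords is controlled. The paper first proves $\ex[X]=O(n/\log n)$ for the number $X$ of isolated chords (via $\ex[X]\le_b 2n[x^my^{n-1}]T(x,y)^2/T_{n,m}$ and the $\ell=2$ case of Lemma~\ref{ell components}), so that whp $s\le n/(\log n)^{1-\eps}$; this gives the \emph{uniform} bound $k-s\ge (n-s)/\ell\ge n/(2\ell)$, which in turn makes $(n/(m+n))^{2k-s}\le (n/(m+n))^{k+n/(2\ell)}$ immediate and independent of $k$. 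You bypass the isolated-chord count entirely and instead sum over every admissible $(k,s)$, reading the decay off the admissibility constraint $(k-s)\ell\ge n-s$. That is an attractive shortcut, and the route does work.

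But your bookkeeping is looser than you acknowledge, in two ways. First, the exact cap is $s_{\max}=k-(n-k)/(\ell-1)$, and your $s_{\max}\approx k-n/\ell$ is only a good approximation for $k=o(n)$; when $k$ is comparable to $n$ the true decay exponent is $(n-k)/(\ell-1)$, which degenerates to $0$ as $k\to n$, so there is no uniform $(c\log n)^{-n/\ell}$ factor. Second, replacing $\sum_{s\le s_{\max}}\binom{k}{s}(c\log n)^s$ by $O\bigl((c\log n)^{s_{\max}}\bigr)$ silently discards $\binom{k}{s_{\max}}$, which is not a polynomial-in-$n$ quantity. Both gaps are repairable --- for $k\gtrsim cn/(A+c)$ where the $(c\log n)^{-(n-k)/(\ell-1)}$ decay weakens, the prefactor $(en/(kc\log n))^k$ is already far below its peak value $e^{n/(c\log n)}$, and $\binom{k}{(n-k)/(\ell-1)}\le\bigl(ek(\ell-1)/(n-k)\bigr)^{(n-k)/(\ell-1)}$ is absorbed into the same decay factor --- but carrying this out requires exactly the case analysis over $k$ that the paper's one-line isolated-chord estimate is designed to avoid. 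So: same decomposition, same key lemmas, a different and slightly more fragile way to handle $s$; you should either restore the isolated-chord step or spell out the $k$-dependent treatment of $s_{\max}$ and $\binom{k}{s_{\max}}$.
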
 
\begin{proof} First of all, by Corollary \ref{cor2}, it suffices to prove that whp
there is a  component of size exceeding $\ell:=A \log n$. Let $X$ denote the total number of isolated chords in the random diagram. Clearly
\begin{equation}\label{T(x,y)2}
\begin{aligned}
\ex[X]\le&\, \frac{2n}{T_{n,m}}\cdot\sum_{n_1+n_2=n-1\atop m_1+m_2=m}T_{n_1,m_1}T_{n_2,m_2}\\
=&\,\frac{2n}{T_{n,m}}\,[x^my^{n-1}]\,T(x,y)^2.
\end{aligned}
\end{equation}
So, using \eqref{asym,ext,ell} with $\ell=2$, 
\begin{equation*}
\ex[X]\le_b n\frac{\binom{n+m-2}{n-2}}{\binom{n+m-1}{n-1}}=\frac{n(n-1)}{n+m-1}=O(n(\log n)^{-1}).
\end{equation*}
So whp $X\le n/(\log n)^{1-\eps}$, if $\eps\in (0,1)$ is fixed. Thus, it suffices to prove that
\begin{equation}\label{suffices}
\sum_{k,s}\frac{T_{n,m}(k,\ell,s)}{T_{n,m}}\to 0,
\end{equation}
where the sum is over all $k,s$ such that
\begin{equation}\label{s<,k-s>}
s\le s(n): \frac{n}{(\log n)^{1-\eps}},\qquad k-s \ge \frac{n-s}{\ell}.
\end{equation}
(Indeed, $T_{n,m}(k,\ell,s)/T_{n,m}$ is the probability that the diagram has $k$ components, 
with exactly $s$ components of size $1$, and all other components of size {\it not 
exceeding\/} $\ell$.) Combining the asymptotic formula \eqref{asym,ext,ell} for $T_{n,m}$ in Lemma \ref{ell components}, 
the bound \eqref{Tnmkells<explicit} for $T_{n,m}(k,\ell,s)$, and the constraints 
\eqref{s<,k-s>} we obtain: 
\begin{align*}
\frac{T_{n,m}(k,\ell,s)}{T_{n,m}}\ll&\, \binom{2n}{k}\left(\frac{n}{m+n}\right)^{k+n/(2\ell)}\\
\le&\, \frac{(2n)^k}{k!}\left(\frac{n}{m}\right)^k\times \left(\frac{n}{m}\right)^{n/(2\ell)}\\
\le&\,\exp\bigl(2n^2/m \bigr)\cdot\exp\left(-\frac{n}{2A\log n}\log(m/n)\right)\\
\le&\,\exp\left(O(n/\log n)-\gamma\,\frac{n\log\log n}{\log n}\right),
\end{align*}
$\gamma>0$ being fixed. For the third line in the above inequality, we used $y^k/k! \le e^y$. The last quantity approaches $0$ super-polynomially fast. Therefore, so does the expression in \eqref{suffices}.
\end{proof}

\begin{Theorem}\label{almostm,n;m=nlogn} Suppose that $\lim m/(n\log n)\in (0,2/\pi^2)$. Then whp
there exists a component that has almost all $m$ crossings and a positive fraction of $n$ chords.
\end{Theorem}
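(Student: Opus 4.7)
The plan is to augment Theorem \ref{almostm,m=nlogn} with a first-moment bound that controls the number of chords in the large component. By Theorem \ref{almostm,m=nlogn}, for any fixed $\eps>0$, whp there exists a component $K$ carrying at least $(1-\eps)m$ crossings. To prove that $K$ contains a positive fraction of the chords, it suffices to show that for some $\delta>0$ (depending on $\eps$ but not on $n$),
\[
\Sigma(n):=\sum_{\nu\le \delta n}\,\sum_{\mu\ge (1-\eps)m}\ex[X_{\nu,\mu}]\to 0,
\]
so that by Markov's inequality no component can simultaneously have fewer than $\delta n$ chords and at least $(1-\eps)m$ crossings.

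To bound $\Sigma(n)$, I would apply \eqref{EXnumu,cruder}, which yields $\ex[X_{\nu,\mu}]\le_b (n^4/\nu^2)(4F(\mu/\nu,m/n))^\nu$. Set $u:=\mu n/(\nu m)$, so that in our range $u\ge(1-\eps)/\delta$. Using the factorization $F(x,y)=\tfrac{1+x}{1+y}(1+1/x)^x(1-1/(1+y))^x$ together with $(1+1/x)^x\le e$ and $1-t\le e^{-t}$, one obtains $4F(x,y)\le \rho(u)(1+o(1))$ in the regime $u=O(1)$, $y:=m/n\to\infty$, where $\rho(u):=4ue^{1-u}$, together with the always-valid crude bound $4F(x,y)\le 4ue^{1-u/2}$, which goes to $0$ as $u\to\infty$. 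The function $\rho$ is log-concave, equals $4$ at $u=1$, decreases on $(1,\infty)$, and crosses the value $1$ at a unique $u^*\in(3,4)$. Choosing $\delta>0$ small enough that $(1-\eps)/\delta>u^*$ produces a constant $\rho_0<1$ with $4F\le\rho_0$ uniformly over the region of $\Sigma(n)$.

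Since $\mu\le\binom{\nu}{2}$ and $\mu\ge (1-\eps)m$ force $\nu\ge\sqrt{2(1-\eps)m}=\Theta(\sqrt{n\log n})$, while each $\nu$ admits at most $\eps m+1$ values of $\mu$, summing the exponential-in-$\nu$ bound gives
\[
\Sigma(n)\le_b n^5(\log n)\,\rho_0^{\sqrt{2(1-\eps)m}},
\]
which decays super-polynomially in $n$. The main technical point is the uniform estimate $4F(\mu/\nu,m/n)\le\rho_0<1$ across the full range of $(\nu,\mu)$: one must combine the tight bound $4F\sim\rho(u)$ for $u=O(1)$ with the crude decay $4F\le 4ue^{1-u/2}$ for $u\to\infty$, handling the crossover in the spirit of Cases $X\ge c$ and $X\le c$ in the proof of Lemma \ref{maxden}.
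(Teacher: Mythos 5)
Your proof is correct and follows essentially the same route as the paper: both reduce, via Theorem~\ref{almostm,m=nlogn} and the first-moment bound \eqref{EXnumu,cruder}, to showing that $\sum_{\nu\le\delta n,\,\mu\ge(1-\eps)m}\ex[X_{\nu,\mu}]\to 0$, and both analyze the factor $4F(\mu/\nu,m/n)$ as a function of the scaled ratio $u=(\mu/\nu)/(m/n)$. The only difference is cosmetic: you split into a moderate-$u$ regime (asymptotic $\rho(u)=4ue^{1-u}$) and a large-$u$ regime (crude exponential decay), whereas the paper produces a single chain of inequalities $H(x)\le\log(4e)-(1/2-e^{-1})u$ valid uniformly, from which the same explicit smallness condition on $\delta$ falls out.
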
 
\begin{proof} Given $\eps,\,\delta\in (0,1)$, let $\mathcal{C}_{\eps,\delta}$ denote the total number of the $(\nu,\mu)$-components with $\nu\le \delta n$ and $\mu\ge (1-\eps)m$. In light of Theorem \ref{almostm,m=nlogn}, it suffices to show that, for $\eps<1/2$, and $\delta$ sufficiently small, 
$\ex[\mathcal{C}_{n,\eps}]\to 0$. Let $\delta<1$ be such that
\begin{equation}\label{delta<}
\delta<\frac{(1/2-e^{-1})(1-\eps)}{\log (4e)}.
\end{equation}
By \eqref{EXnumu,cruder}
\begin{align*}
\ex[\mathcal{C}_{\eps,\delta}]\le_b&\  \sum_{\sqrt{2\mu}\le \nu\le \delta n\atop \mu\ge
(1-\eps)m} (n^4/\nu^2)\exp\bigl[\nu H(x_{\nu,\mu})\bigr],\quad x_{\nu,\mu}:=\frac{\mu}{\nu};\\
H(x):=&\,\log 4+(1+x)\log\frac{1+x}{1+m/n}+x\log\frac{m/n}{x}.
\end{align*}
Observe that, for $\nu,\,\mu$ in question,
\[
x_{\nu,\mu}\ge \frac{m}{n}\,\frac{1-\eps}{\delta}>y:=\frac{m}{n},
\]
since $1-\eps>\delta$. Further,
\begin{align*}
H(x)= &\, \log 4 +(1+x)\log\frac{x}{y}+(1+x)\log\frac{1+1/x}{1+1/y}+x\log\frac{y}{x}\\
\le&\, \log 4 +\log\frac{x}{y}+(1+x)\left(\frac{1+1/x}{1+1/y}-1\right)\\
\le&\, \log 4 +1 +\log\frac{x}{y}-\frac{x/y}{1+1/y}\\
\le&\, \log(4e) +\log\frac{x}{y}-\frac{1}{2}\,\frac{x}{y}\\
\le&\, \log(4e)-(1/2 - e^{-1})\,\frac{x}{y},
\end{align*}
the last inequality following from $\log z \le e^{-1}z$. Therefore
\begin{align*}
H(x_{\nu,\mu})\le&\, -\gamma(\eps,\delta),\\
\gamma(\eps,\delta):=&\,(1/2 - e^{-1})\frac{1-\eps}{\delta}-\log(4e)>0,
\end{align*}
see \eqref{delta<}. Therefore, as $n\to\infty$,
\begin{equation*}
\ex[\mathcal{C}_{\eps,\delta}]\le_b n^4\sum_{\nu\ge \sqrt{m}} \exp\bigl[-\nu \gamma(\eps,
\delta)\bigr]\to 0. \qedhere
\end{equation*}
\end{proof}

To complete the picture, turn now to $m=\Theta(n)$.
\begin{Theorem}\label{mordern}
If $m\le n/14$, then there exists a constant $A>0$ such
that whp the size of the largest component is at most $A \log n$.
\end{Theorem}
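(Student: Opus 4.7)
The plan is a first moment argument along the lines of Lemma \ref{densitygap}, but using Lemma \ref{4} in place of Lemma \ref{ell components}: for $m\le n/14$ we have $q:=m/(m+n)\le 1/15$, so $q$ is bounded away from $1$ and the infinite product $\prod_{j\ge 1}(1-q^j)^{3}$ is a positive constant, giving $T_{n,m}\asymp \binom{n+m-1}{n-1}C_n$. Combining this asymptotic with the Chernoff-type estimate
\[
[x^{m-\mu}y^{n-\nu}]\,T(x,y)^{2\nu}\le 4^n\,\frac{(m-\mu+n-\nu)^{m-\mu+n-\nu}}{(m-\mu)^{m-\mu}(n-\nu)^{n-\nu}}
\]
(obtained by letting $y\uparrow (1-x)/4$ at the optimal $x=(m-\mu)/(m-\mu+n-\nu)$) and the bound \eqref{Cnumu,mu/nularge} for $C_{\nu,\mu}$, one reruns the calculation behind \eqref{EXnumu,cruder} to obtain
\[
\ex[X_{\nu,\mu}]\le n^{O(1)}\,\bigl(4F(\mu/\nu,\,m/n)\bigr)^{\nu},\qquad
F(x,y)=\frac{(1+x)^{1+x}}{x^x}\cdot\frac{y^x}{(1+y)^{1+x}}.
\]

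The decisive step is a calculus check on $F$. A connected component on $\nu$ chords has $\mu\ge \nu-1$, so $x:=\mu/\nu\ge 1-1/\nu$, and for $\nu\ge 15$ this exceeds $y:=m/n\le 1/14$. From
$\partial_y\log(4F)=(x-y)/[y(1+y)]$ and $\partial_x\log(4F)=\log\tfrac{(1+x)y}{x(1+y)}$
one reads off that on the region $\{x\ge y\}$ the function $4F$ is increasing in $y$ and decreasing in $x$. Therefore
\[
\max_{\mu\ge\nu-1,\;m\le n/14}\,4F(\mu/\nu,\,m/n)\le 4F\!\bigl(1-\tfrac{1}{\nu},\,\tfrac{1}{14}\bigr)
\ \xrightarrow[\nu\to\infty]{}\ 4F(1,\tfrac{1}{14})=\frac{16\cdot 14}{225}=\frac{224}{225}<1.
\]
Hence for all $\nu$ exceeding an absolute constant, $4F(\mu/\nu,m/n)\le \rho$ for any fixed $\rho\in(224/225,1)$.

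Summing the per-pair bound over the at most $\binom{\nu}{2}\le \nu^2$ admissible values of $\mu$ and then over $\nu\ge A\log n$ gives
\[
\sum_{\nu\ge A\log n}\,\sum_{\mu\ge\nu-1}\ex[X_{\nu,\mu}]
\;\le\; n^{O(1)}\sum_{\nu\ge A\log n}\nu^2\rho^{\nu}
\;=\;O\!\bigl(n^{O(1)-A\log(1/\rho)}\bigr),
\]
which tends to $0$ as soon as $A$ is a sufficiently large constant; the specific value $A=5/\log(225/224)$ quoted in the Introduction is obtained by taking $\rho$ arbitrarily close to $224/225$ and bookkeeping that the polynomial-in-$n$ prefactor never exceeds $n^4$. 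A standard Markov-inequality bound on the number of components of size at least $A\log n$ then completes the proof. The only genuine obstacle is the inequality $4F(1,1/14)<1$: the hypothesis $m\le n/14$ is essentially optimal for this route, since replacing $1/14$ by $1/13$ makes the analogous constant exceed $1$. Everything else is routine asymptotic bookkeeping.
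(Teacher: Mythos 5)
Your proof is correct and takes essentially the same first-moment route as the paper: bound $\ex[X_{\nu,\mu}]\le n^{O(1)}\bigl(4F(\mu/\nu,m/n)\bigr)^{\nu}$ via \eqref{EXnumu,cruder}, show that the base tends to $4F(1,1/14)=224/225<1$ as $\nu\to\infty$, and sum. The only (cosmetic) difference is how you control $4F$: the paper uses concavity of $H(x)=\log(4F(x,m/n))$ and a Taylor expansion around $x=1$, while you use monotonicity of $4F$ in $x$ and $y$ separately on the region $x\ge y$; both yield the same threshold $z^*=7-\sqrt{48}>1/14$.
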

\begin{proof} Let $A>0$ to be specified shortly. Then for $E_n$, the expected number of 
components of size exceeding $A \log n$,  (by \eqref{EXnumu,cruder} again), we have
\[
E_n\le_b \ n^4\sum_{\nu\ge A\log n}\,\sum_{\mu\ge \nu-1}\nu^{-2}\cdot \exp\bigl[\nu H(x_{\nu,\mu})\bigr];
\]
here $x_{\nu,\mu}=\mu/\nu \ge 1 - 1/(A \log n)$. Since $H(x)$ is concave, 
\begin{align*}
H(x_{\nu,\mu})\le&\, H(1) +H^\prime(1)(x_{\nu,\mu}-1)\\
\le&\, \log\frac{16m/n}{(1+m/n)^2} +O((\log n)^{-1}).
\end{align*}
Now $16z/(1+z)^2<1$ for $0<z<z^*:=7-\sqrt{48}> 1/14$. So if $m/n\le 1/14$, then 
\begin{equation*}
E_n\le_b n^4m\sum_{\nu\ge A \log n} \exp\left[\nu\left(\log\frac{224}{225}+O((\log n)^{-1})\right)\right]
\to 0,
\end{equation*}
if $A> 5/ (\log 225/224)$.
\end{proof}

\section{Concluding Remarks}

Although chord diagrams have been studied widely, there are still many open problems about them, particularly of enumerative-probabilistic nature. The results presented in this paper provide partial solutions to some, in our opinion interesting, problems. We conclude this paper  with some questions for possible extensions of our results. 

An asymptotic expression for the number $T_{n,m}$ of chord diagrams with a given number of crossings has been found in Theorem \ref{ell components} for the case $m<(2/\pi^2)n \log $, but its extension
for larger $m$ is still to be found. It would be quite useful to even have usable lower and upper bounds for $T_{n,m}$.  Lemma~\ref{bound,allmn} gives an upper bound for all $n$ and $m$, but
a lower bound only for $m=o(n^{3/2})$. 

Our main goal in this paper was to observe a kind of phase transition for the largest component of a random chord diagram. Theorem~\ref{almostm,n;m=nlogn} tells us that when $m/n\log n$ has a limit in $(0,2/\pi^2)$, there is a giant component containing almost all the crossings (edges in the intersection graph) and a positive fraction of chords. In Erd\H{o}s-R\'{e}nyi graphs, coupling $G(n,m)$ with $G(n,m+1)$ with a graph process yields immediately that having a giant component is a monotone property. Finding a similar coupling for chord diagrams would imply the existence of a giant component (whp) for $m=\Omega(n\log n)$. On the other hand, it is still unclear whether there is a giant component or not for smaller values of $m$. For $m\le n/14$, this possibility is ruled out by Theorem~\ref{mordern}. Thresholds for various other graph theoretic properties  of random chord diagrams are also of interest to us.

Lastly, there are two other classes of graphs nontrivially related to chord diagrams: circle graphs and interlace graphs. A (labeled) circle graph is obtained by labeling a set of chords of a circle, where the edges are determined by the crossing relation. An \emph{interlace graph} with vertex set $[n]$  is obtained from a permutation of the multiset $\{1,1,2,2,\dots,n,n\}$, where two vertices $i$ and $j$ are adjacent if the corresponding symbols are interlaced in the permutation, i.e., if the permutation looks like $\dots i\dots j \dots i\dots j \dots$ or $\dots j\dots i \dots j\dots i\dots$ As Arratia et al.\ \cite{Arratia} pointed out, each circle graph is an interlace graph, and the number of interlace graphs is bounded above by the number of permutations of the multiset, which is $(2n)!/2^n$. A chord diagram corresponds to a \emph{standard permutation}, in which the first occurence of $i$ is always before the first occurrence of $j$ for all pairs $i<j$. However, the number of interlace graphs of standard permutations is not the same as the number of intersection graphs due to the fact that the same interlace graph might come from many different standard permutations, whereas the intersection graphs that we consider uniquely determine the chord diagrams. For example, there are $\tbinom{2n}{n}/(n+1)$ standard permutations producing the empty graph on $[n]$. 
We are curious if the results in this paper hold for these two important classes of graphs, or at least   
shed some light on the respective thresholds for the appearance of a giant component.

\section*{Acknowledgements}
The authors are thankful to Sergei Chmutov for encouraging us to study random chord diagrams and for helpful discussions.

\section*{References}

\end{document}